\documentclass[12pt]{amsart} 

\usepackage{amssymb,amsmath}
\usepackage{amsfonts}
\usepackage{amscd}
\usepackage{graphicx}
\usepackage[all]{xy}

 \theoremstyle{plain}
\newtheorem{thm}{Theorem}[section]
  \theoremstyle{plain}
  \newtheorem{prop}[thm]{Proposition}
  \theoremstyle{plain}
  \newtheorem{cor}[thm]{Corollary}
  \theoremstyle{plain}
  \newtheorem{lem}[thm]{Lemma}
\theoremstyle{plain}
  \newtheorem{rem}[thm]{Remark}
\theoremstyle{plain}
  \newtheorem{conj}[thm]{Conjecture}
\theoremstyle{plain}
  \newtheorem{defn}[thm]{Definition}
\theoremstyle{plain}
  


\newcommand{\C}{\mathbb{C}}
\newcommand{\R}{\mathbb{R}}
\newcommand{\tr}[1]{\mathrm{tr}(#1)}
\newcommand{\xb}{X}

\newcommand{\aq}{/\!\!/}
\newcommand{\X}{\mathfrak{X}}

\newcommand{\hm}{\mathrm{Hom}}

\newcommand{\F}{\mathtt{F}}

\newcommand{\RC}[1]{\mathfrak{R}_{#1}}

\newcommand{\XC}[1]{\mathfrak{X}_{#1}}

\newcommand{\Ad}{\mathrm{Ad}}

\newcommand{\g}{\mathfrak{g}}

\newcommand{\SLm}[1]{\mathsf{SL}_{#1}}
\newcommand{\GLm}[1]{\mathsf{GL}_{#1}}
\newcommand{\glm}[1]{\mathfrak{gl}(#1,\C)}

\newcommand{\SUm}[1]{\mathsf{SU}_{#1}}
\newcommand{\Um}[1]{\mathsf{U}_{#1}}

\newcommand{\CC}{Z^1(\F_r;\mathfrak{g}_{\mathrm{Ad}_\rho})}

\newcommand{\cp}{\C\p}
\newcommand{\p}{\mathsf{P}}

\makeatother

\begin{document}

\title[Singularities of Character Varieties]{Singularities of free group character varieties}

\author{Carlos Florentino and Sean Lawton}

\begin{abstract}
Let $\XC{r}$ be the moduli space of $\SLm{n}$, $\SUm{n}$, $\GLm{n}$, or $\Um{n}$-valued representations of a rank $r$ free group.  We classify the algebraic singular stratification of $\XC{r}$.  This comes down to showing that the singular locus corresponds exactly to reducible representations if there exist singularities at all.  Then by relating algebraic singularities to topological singularities, we show the moduli spaces $\XC{r}$ generally are not topological manifolds, except for a few examples we explicitly describe.
\end{abstract}

\maketitle

\section{Introduction}
During the last few decades, character varieties have played important roles in knot theory, hyperbolic geometry, Higgs and vector bundle theory, and quantum field theory.  However, many of their fundamental properties and structure are not completely understood. 

In this article, we first classify the (algebraic) singular locus of $\SLm{n}$ and $\GLm{n}$-character varieties of free groups by relating the existence of a singularity with the reducibility of the corresponding representation. We then classify all such character varieties that arise as manifolds by explicitly describing the topological neighborhoods of generic singularities.  The results we obtain do not necessarily extend to general $G$-character varieties of finitely generated groups $\Gamma$, if $G$ is not one of $\SLm{n}$, $\SUm{n}$, $\GLm{n}$, or $\Um{n}$ and $\Gamma$ is not free; explicit counter-examples can be obtained via methods different from those considered in this paper (see Section \ref{remarks}).  Our first main theorem generalizes results in \cite{HP}, and our second main theorem generalizes results in \cite{BC}.  They may be described more precisely as follows.

Let $\F_r$ be a rank $r$ free group and let $G$ be a reductive complex algebraic group with $K$ a maximal compact subgroup (see Section \ref{charvar}).  
Let $\RC{r}(G)=\hm(\F_r,G)$ and $\RC{r}(K)=\hm(\F_r,K)$ be varieties of representations, and let $G$, respectively $K$, act by conjugation on these representation spaces.

Consider the space $\XC{r}(K):=\RC{r}(K)/K$ which is the conjugation orbit space of $\RC{r}(K)$ where $\rho\sim\psi$ if and only if there exists $k\in K$ so $\rho=k\psi k^{-1}$.  Let $\C[\RC{r}(G)]$ be the affine coordinate ring of $\RC{r}(G)$ and let $\C[\RC{r}(G)]^{G}$ be the subring of $G$-conjugation invariants. Then define $\XC{r}(G):=\mathrm{Spec}_{max}\left(\C[\RC{r}(G)]^{G}\right)$ which parametrizes unions of conjugation orbits where two orbits are in the same union if and only if their closures have a non-empty intersection.

The space $\XC{r}(G)$, called the $G$-character variety of $\F_r$, is a complex affine variety and so has a well-defined (algebraic) singular locus (a proper sub-variety) which we denote by $\XC{r}(G)^{sing}$.  Similarly, $\XC{r}(K)$ is a semi-algebraic set and so has a real algebraic coordinate ring which likewise determines an algebraic singular locus $\XC{r}(K)^{sing}$.  For simplicity, despite the fact it is generally not an algebraic set, we will also refer to $\XC{r}(K)$ as a character variety.

We will be mainly concerned with the cases when $G$ is the general linear group $\GLm{n}$ or the special linear group $\SLm{n}$ (over $\C$), for which $K$ is the unitary group $\Um{n}$ or the special unitary group $\SUm{n}$, respectively.  In these cases a representation $\rho$ is called {\it irreducible} if with respect to the standard action of $G$, respectively $K$, on $\C^n$ the induced action of $\rho(\F_r)$ does not have any non-trivial proper invariant sub-spaces.  Otherwise $\rho$ is called {\it reducible}.  This allows one to define the sets $\XC{r}(G)^{red}$ and $\XC{r}(K)^{red}$ which correspond to the spaces of equivalence classes in $\XC{r}(G)$, respectively $\XC{r}(K)$, that have a representative which is reducible.
 
In Section \ref{charvar}, we show that the (algebraic) singular locus of $\XC{r}(\SLm{n})$ and $\XC{r}(\GLm{n})$ respectively determines the (algebraic) singular locus of $\XC{r}(\SUm{n})$ and $\XC{r}(\Um{n})$.  We then show $\XC{r}(\SLm{n})\subset \XC{r}(\GLm{n})$ has its singular locus determined by the singular locus of $\XC{r}(\GLm{n})$.  This reduces the classification of the singular loci of these four families of moduli spaces to $\XC{r}(\GLm{n})$ alone.  We end Section \ref{charvar} with examples of $\XC{r}(G)$ that are homeomorphic to manifolds with boundary; we conjectured in \cite{FlLa} that these were the {\it only} examples.

It is straightforward to establish that $\XC{1}(\SLm{n})\cong\C^{n-1}$ and $\XC{2}(\SLm{2})\cong\C^3$ are affine spaces and so smooth, and $\XC{1}(\SLm{n})^{red}=\XC{1}(\SLm{n})$.  In \cite{HP}, it is shown that $\XC{r}(\SLm{2})^{sing}=\XC{r}(\SLm{2})^{red}$ for $r\geq 3$.  More generally, one can establish that all irreducible representations in $\SLm{n}$-character varieties of free groups are in fact smooth; that is $\XC{r}(\SLm{n})^{sing}\subset\XC{r}(\SLm{n})^{red}$.  In \cite{La1} it is shown that the singular locus of $\XC{2}(\SLm{3})$ corresponds exactly to the set of equivalence classes of reducible representations, that is, $\XC{2}(\SLm{3})^{red}=\XC{2}(\SLm{3})^{sing}$.  These examples generalize to our first main result.

\begin{thm}\label{theorem1}Let $r,n\geq 2$.  Let $G$ be $\SLm{n}$ or $\GLm{n}$ and $K$ be $\SUm{n}$ or $\Um{n}$.  Then
$\XC{r}(G)^{red}=\XC{r}(G)^{sing}$  and $\XC{r}(K)^{red}=\XC{r}(K)^{sing}$ if and only if $(r,n)\not=(2,2)$.
\end{thm}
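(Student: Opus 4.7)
My plan is to reduce to the case $G = \GLm{n}$ via the structural results established in Section \ref{charvar}, and then prove the two directions of the biconditional separately. The ``only if'' direction is handled immediately: when $(r,n) = (2,2)$ we have $\XC{2}(\SLm{2}) \cong \C^3$, so the algebraic singular locus is empty, yet the reducible locus is a non-empty proper subvariety (containing, for instance, the $\Zm{2}$-valued scalar representations), and equality fails. The unitary case follows via the reduction of Section \ref{charvar}.

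For the forward direction, the inclusion $\XC{r}(G)^{sing} \subseteq \XC{r}(G)^{red}$ is the observation noted in the introduction: since $\F_r$ is free we have $H^2(\F_r, \g_{\Ad_\rho}) = 0$, so $\RC{r}(G)$ is smooth everywhere; at an irreducible $\rho$ Schur's lemma forces the stabilizer to equal $Z(G)$, which acts trivially on $H^1(\F_r, \g_{\Ad_\rho})$, and Luna's slice theorem then identifies the local model of $\XC{r}(G)$ at $[\rho]$ with this affine space $H^1$, so $[\rho]$ is a smooth point.

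The substantive step is the reverse inclusion $\XC{r}(G)^{red} \subseteq \XC{r}(G)^{sing}$. Any reducible class has a polystable representative $\rho = \bigoplus_{i=1}^k \rho_i^{m_i}$ with distinct irreducibles $\rho_i$ of dimension $n_i$ and multiplicity $m_i$, and Luna's slice theorem identifies the étale-local model of $\XC{r}(G)$ at $[\rho]$ with the GIT quotient $V \aq H$, where $H = \prod_i \GLm{m_i}$ and, via Schur combined with the Euler-characteristic relation $\dim H^1 = \dim H^0 + (r-1)\dim M$ valid over $\F_r$,
\[
V \;=\; \bigoplus_i M_{m_i}(\C)^{(r-1)n_i^2 + 1} \;\oplus\; \bigoplus_{i \neq j} M_{m_i \times m_j}(\C)^{(r-1)n_i n_j}
\]
as an $H$-module (quiver-type action). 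I would then show $V \aq H$ is singular at the origin by case analysis: if some $m_i \geq 2$, the diagonal summand contains a subquotient of the form $M_{m_i}^N \aq \GLm{m_i}$ with $N = (r-1)n_i^2 + 1 \geq 2$, the classical simultaneous-conjugation quotient, singular at $0$ for $m_i \geq 2$; if all $m_i = 1$ (so $H$ is a torus and $k \geq 2$), one uses pairwise ``determinantal'' subquotients $\C^{(r-1)n_i n_j} \oplus \C^{(r-1)n_i n_j} \aq \C^*$ with weights $\pm 1$ (singular whenever $(r-1)n_i n_j \geq 2$), and in the remaining subcase in which additionally all $n_i = 1$, one exhibits a cycle relation in the invariant ring of the complete quiver on the $k$ characters.

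The principal obstacle is the invariant-theoretic step in the last subcase: one must verify that for every multiplicity-free polystable $\rho$ with all $n_i = 1$ and either $r \geq 3$ or $k \geq 3$, there is a non-trivial syzygy among the torus invariants of the associated quiver (typically of the form ``big cycle squared equals a product of two-cycles''), obstructing smoothness of $V \aq T$ at the origin. It is precisely here that the exceptional configuration $(r,n) = (2,2)$, in which $k = 2$ and no cycle of length exceeding two exists, slips through, matching the failure of equality in that single case.
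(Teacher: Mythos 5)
Your overall scaffolding---reducing to $G = \GLm{n}$ via Corollary \ref{singularequivalence} and Lemmas \ref{kredgred}, \ref{ksinggsing}; the ``only if'' direction via $\XC{2}(\SLm{2}) \cong \C^3$; and the inclusion $\XC{r}(G)^{sing} \subseteq \XC{r}(G)^{red}$ from freeness of $\F_r$, Schur's lemma, and the Luna slice---matches the paper closely. The genuine divergence is in the substantive inclusion $\XC{r}(G)^{red} \subseteq \XC{r}(G)^{sing}$. You attempt a direct case analysis of \emph{every} polystable reducible $\rho = \bigoplus_i \rho_i^{m_i}$, yielding a general quiver GIT quotient $V \aq H$. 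The paper does something much narrower: it treats \emph{only} the generic reducibles of reduced type $[n_1,n_2]$ (exactly two irreducible summands, so $\mathsf{Stab}_\rho$ is a minimal torus $\C^* \times \C^*$), where the slice quotient literally factors as a product $\C^{d} \times \bigl( W \aq (\C^* \times \C^*) \bigr)$ with the second factor a rank-one determinantal variety singular at $0$ whenever $\dim_\C W = 2n_1 n_2(r-1) > 2$. It then invokes density of $U_{r,n}$ in $\XC{r}(\GLm{n})^{red}$ (Lemma \ref{lem:GLm-dense}) together with the adherence argument (Lemma \ref{adherence:1}): since $\XC{r}(G)^{sing}$ and $\XC{r}(G)^{red}$ are subvarieties with $\XC{r}(G)^{sing} \subset \XC{r}(G)^{red}$, a dense subset of singularities inside the reducibles forces equality. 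This sidesteps the harder invariant theory of higher multiplicities entirely.

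Your route has two genuine gaps beyond the one you flag. First, the ``cycle-relation'' subcase you identify (multiplicity-free, all $n_i = 1$, torus stabilizer) is exactly where the work lies; when $r = 2$ every pairwise subquotient $\C^{(r-1)n_in_j} \oplus \C^{(r-1)n_in_j} \aq \C^*$ is smooth (since $(r-1)n_i n_j = 1$), so singularity must come from longer cycles in the toric invariant ring, and you leave this unproved. Second, and more structurally, your argument repeatedly passes from ``an $H$-submodule $V' \subset V$ has $V' \aq H$ singular at $0$'' to ``$V \aq H$ is singular at $0$''. This inference is not automatic---a singular closed subvariety can sit inside a smooth ambient space (compare the cusp in $\C^2$). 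In the paper's type-$[n_1,n_2]$ setting the slice quotient is a genuine product of a smooth piece with a singular piece, so the conclusion is immediate; for general multiplicities $V \aq H$ does not factor this cleanly, and you would need an additional argument (e.g.\ an $H$-equivariant linear retraction $V \to V'$ inducing a retraction on quotients together with a tangent-space dimension count, or a direct description of generators and relations of the invariant ring). The paper's decision to prove singularity only at the generic reducible and close by adherence is precisely what makes both difficulties disappear.
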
  

In fact we are able to use an induction argument to completely classify the singular stratification of these semi-algebraic spaces.  The proof and development of this result constitutes Section \ref{singularsection}, including a brief review of a weak version of the celebrated Luna Slice Theorem.  

Theorem \ref{theorem1} is sharper than it might appear at first.  Replacing $\F_r$ by a general finitely presented group $\Gamma$ one can find examples where irreducibles are singular and examples where reducibles are smooth.  On the other hand, changing $G$ to a general reductive complex algebraic group, we find there are examples where irreducibles are singular.  In Section \ref{remarks}, we discuss this in further detail.

A locally Euclidean Hausdorff space $M$ with a countable basis is called a {\it topological manifold}. More generally, if the neighborhoods are permitted to be Euclidean half-spaces then $M$ is said to be a {\it topological manifold with boundary}.  In \cite{FlLa} we determined the homeomorphism type 
of $\XC{r}(\SUm{n})$ in the cases $(r,n)=(r,1),(1,n),(2,2),(2,3),$ and $(3,2)$ where we showed all were topological manifolds with boundary; this is reviewed in Section \ref{ex:manifold}.  In \cite{BC} it is established that $\XC{r}(\SUm{2})$ are not topological manifolds when $r\geq 4$.

Motivated by this we conjectured in \cite{FlLa} and herein prove that the examples computed in \cite{FlLa} are the only cases where a topological manifold with boundary arise.  Precisely, we establish our second main theorem.

\begin{thm} \label{theorem2}
Let $r,n\geq 2$. Let $G$ be $\SLm{n}$ or $\GLm{n}$ and $K$ be $\SUm{n}$ or $\Um{n}$.
$\XC{r}(G)$ is a topological manifold with boundary if and only if $(r,n)=(2,2)$.
$\XC{r}(K)$ is a topological manifold with boundary if and only if $(r,n)=(2,2),(2,3),$ or $(3,2)$.
\end{thm}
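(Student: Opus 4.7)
The plan is to treat the two directions of each equivalence separately. The ``if'' direction was established in \cite{FlLa}, where explicit homeomorphisms of $\XC{2}(\SLm{2})\cong \C^3$, $\XC{2}(\SUm{2})$, $\XC{2}(\SUm{3})$, and $\XC{3}(\SUm{2})$ with Euclidean spaces or manifolds with boundary were produced (the $\GLm{n}$ and $\Um{n}$ cases $(r,n)=(2,2)$ follow by combining with the character of the determinant). This reduces everything to the ``only if'' direction: for each remaining pair $(r,n)$ with $r,n\geq 2$, I must exhibit a point at which $\XC{r}(G)$ (resp.~$\XC{r}(K)$) fails to be locally Euclidean, even allowing half-space neighborhoods.

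By Theorem \ref{theorem1}, whenever $(r,n)\neq(2,2)$ the algebraic singular locus coincides with the nonempty reducible locus, so it suffices to find a reducible class $[\rho]$ whose topological neighborhood is not a manifold with boundary. The main tool is the Luna slice theorem: an analytic neighborhood of a polystable class $[\rho]$ in $\XC{r}(G)$ is isomorphic to a neighborhood of the origin in $N_\rho /\!\!/ H$, where $H$ is the reductive stabilizer of $\rho$ and $N_\rho\cong \CC/\CB$ is the slice representation with its natural linear $H$-action; an analogous compact statement provides a model $N_\rho/(H\cap K)$ for $\XC{r}(K)$. I will take $\rho$ to be the most generic polystable reducible representation, namely a direct sum of generic characters of the maximal torus, so that $H$ is essentially the maximal torus possibly extended by a subgroup of the Weyl group. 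The weight-space decomposition of $N_\rho$ under this torus is then explicit and can be written down in terms of $r$ and $n$.

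To detect non-manifoldness I apply the local homology criterion: at a point $p$ of a topological $d$-manifold with boundary, $H_k(X,X\setminus\{p\};\mathbb{Z})$ is either $\mathbb{Z}$ concentrated in degree $d$ or identically zero, so equivalently the link of $p$ is either a homology sphere or a homology disc. I will compute the link of the origin in $N_\rho /\!\!/ H$ and show, outside the exceptional list, that it carries non-trivial homology in multiple degrees, ruling out either possibility. The main obstacle is sharpness at the boundary of the manifold region: on the compact side one must show that the manifold property is destroyed precisely in moving from $(2,3)$ to $(3,3)$ or $(2,4)$, and from $(3,2)$ to $(4,2)$, which demands an explicit link computation in each of these minimal failing cases. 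Once these base cases are handled, an induction in $r$ and $n$ covers the rest: any larger reducible stratum in $\XC{r}(G)$ contains a sub-stratum whose Luna slice factors through a smaller case, propagating the non-manifold defect upward. The complex case is strictly easier, since there one only needs to rule out topological smoothness rather than the more delicate manifold-with-boundary condition.
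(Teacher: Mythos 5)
Your overall architecture — reduce to the ``only if'' direction, locate a reducible point, apply a Luna-type slice to obtain a local model $N_\rho/\!\!/H$, and rule out local Euclideanity by a topological invariant of the link — matches the paper's strategy. However, two of your key choices diverge in ways that create real difficulties.

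First, your choice of $\rho$ points in the wrong direction. You propose taking the \emph{most degenerate} polystable representation (a direct sum of $n$ characters of the maximal torus), so that $H$ is the full maximal torus extended by Weyl data. This makes the slice quotient $N_\rho/\!\!/H$ a high-dimensional toric GIT quotient whose link homology is genuinely hard to pin down. The paper does the opposite: it works at a \emph{minimally} reducible class, of reduced type $[n_1,n_2]$ (two irreducible blocks, $n_1+n_2=n$), for which $\mathsf{Stab}_\rho$ is the smallest possible torus, $\C^*$ for $\SLm{n}$ (resp.\ $S^1$ compactly) or $\C^*\times\C^*$ for $\GLm{n}$. Then the slice representation splits as $H^1(\Ad_{\rho_1})\oplus H^1(\Ad_{\rho_2})\oplus W$, with $\mathsf{Stab}_\rho$ acting trivially on the first two factors and by scaling on $W\cong\C^{2n_1n_2(r-1)}$. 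This gives a closed-form local model: $\R^{d}\times\mathcal{C}(\cp^{n_1n_2(r-1)-1})$ in the compact case, and $\C^{d'}\times\mathcal{C}_\C(\cp^{m}\times\cp^{m})$ with $m=n_1n_2(r-1)-1$ in the complex case (Theorem \ref{nbhd} and the final theorem of Section \ref{manifoldsection}). Because such $\rho$ exist for every $(r,n)$ with $n\geq 2$, no induction in $r$ or $n$ is needed — the same explicit computation covers all cases at once. Your proposed induction (``a larger stratum contains a sub-stratum whose slice factors through a smaller case, propagating the defect upward'') is not substantiated and is not obviously valid: non-manifoldness of a closed sub-stratum does not in general propagate to the ambient space, and $\XC{r}(G)$ does not embed into $\XC{r+1}(G)$ in a way that would support this. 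You would need a genuine argument here, and the paper sidesteps the entire issue.

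Second, your detection tool (local relative homology of the pair, or homology of the link) is heavier than what the paper uses, though not wrong in spirit. The paper's Lemma \ref{lem:euclidian-neigh} is a cheaper homotopy-theoretic criterion: if $\mathcal{C}(X)\times\R^d$ is Euclidean then $X\simeq S^{\dim X}$, and if it is half-Euclidean then $X$ is contractible or a sphere. Since $\cp^m$ (resp.\ $\cp^m\times\cp^m$) fails to be homotopy-spherical or contractible precisely when $m\geq 2$ (resp.\ $m\geq 1$), one gets the exceptional list immediately by solving $n_1n_2(r-1)\in\{1,2\}$. This is essentially your link computation, but formulated so that one never needs to compute $H_*$ explicitly. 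Finally, your remark that ``the complex case is strictly easier'' is backwards in an interesting way: the complex neighborhoods are cones over $\cp^m\times\cp^m$, which already fail to be manifolds for $m\geq 1$, so $(2,3)$ and $(3,2)$ are \emph{not} manifolds over $\C$ even though the corresponding compact character varieties are spheres; the sharpness of the $(2,3)$ and $(3,2)$ boundary cases is a genuinely compact-side phenomenon and would not be visible if you only did the complex case.
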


Theorem \ref{theorem1} and the observation that the reducible locus is non-empty for $n\geq 2$, does not immediately imply Theorem \ref{theorem2} since algebraic singularities may or may not be an obstruction to the existence of a Euclidean neighborhood (topological singularities).  For example, both the varieties given by $xy=0$ and $y^2=x^3$ in $\C^2$ (or $\R^2$) are (algebraically) singular at the point $(0,0)$ but only the latter has a Euclidean neighborhood at the origin. So, only the former is topologically singular.  The variety $xy=0$ is reducible; an example of an irreducible variety that has an algebraic singularity that is also a topological singularity is the affine cone over $\cp^1\times\cp^1$ discussed in Section \ref{cohomaction}.

The proof of Theorem \ref{theorem2} constitutes Section \ref{manifoldsection}.  To prove our main theorems we use slice theorems and explicitly describe the homeomorphism type of neighborhoods (showing them to be non-Euclidean) for a family of examples. It is interesting to note that since $\XC{r}(\SLm{n})$ deformation retracts to $\XC{r}(\SUm{n})$, by \cite{FlLa}, it must be the case that for $(r,n)=(2,3)$ and $(3,2)$ the non-Euclidean neighborhoods deformation retract to Euclidean neighborhoods.  Curiously, these are the only cases ($n\geq 2$) where $\XC{r}(\SUm{n})$ is a topological manifold, and both are homeomorphic to spheres (see \cite{FlLa} or Section \ref{ex:manifold}). 

\section*{Acknowledgments}
The second author thanks W. Goldman for helpful discussions, the Instituto Superior T\'ecnico for its support during the 2007-2009 academic years, and A. Sikora for useful conversations.  We also thank M. Reineke and O. Serman for pointing out relevant references.  This work was partially supported by the Center for Mathematical Analysis, Geometry and Dynamical Systems at I.S.T., and by the ``Funda\c c\~ao para a Ci\^encia e a Tecnologia'' through the programs Praxis XXI, POCI/ MAT/ 58549/ 2004 and FEDER.

\section{Character Varieties}\label{charvar}

Let $G$ be a complex affine reductive algebraic group and let $K$ be a maximal compact subgroup. Then, $G=K_{\C}$ is the complexification
of $K$ (the set complex zeros of $K$ as a real algebraic set).  For instance, $K_{\C}=\SLm{n}$ is the complexification of $K=\SUm{n}$, and $K_{\C}=\GLm{n}$ is the complexification of $K=\Um{n}$. 

Let $\Gamma$ be a finitely generated group and let $\RC{\Gamma}(G)=\hm(\Gamma,G)$ be the $G$-valued representations of $\Gamma$. We call 
$\RC{\Gamma}(G)$ the \textit{$G$-representation variety of $\Gamma$}, although it is generally only an affine algebraic set.

In the category of affine varieties, $\RC{\Gamma}(G)$ has a quotient by the conjugation action of $G$, a regular action, given by 
$\rho\mapsto g\rho g^{-1}$. This quotient is realized as $\XC{\Gamma}(G)=\mathrm{Spec}_{max}(\C[\RC{\Gamma}(G)]^{G})$,
where $\C[\RC{\Gamma}(G)]^{G}$ is the subring of invariant polynomials in the affine coordinate ring $\C[\RC{\Gamma}(G)]$. We call $\XC{\Gamma}(G)$
the \textit{$G$-character variety of $\Gamma$}. Concretely, it parametrizes unions of conjugation orbits where two orbits are in the same union
if and only if their closures intersect non-trivially. Within each union of orbits, denoted $[\rho]$ and called an extended orbit equivalence
class, there is a unique closed orbit (having minimal dimension).  Any representative from this closed orbit is called a \emph{polystable
point}. For $\SLm{n}$ and $\GLm{n}$ the polystable points will have the property that with respect to the action of $\rho(\Gamma)$ on
$\C^{n}$, they are completely reducible; that is, each decomposes into a finite direct sum of irreducible sub-actions (on non-zero subspaces).

Let $\F_{r}=\langle x_{1},...,x_{r}\rangle$ be a rank $r$ free group. The \textit{$G$-representation variety of $\F_r$}, and the \textit{$G$-character
variety of $\F_{r}$} will simply be denoted by $\RC{r}(G)$ and $\XC{r}(G)$, respectively.  The evaluation mapping 
$\RC{r}(G)\to G^{r}$ defined by sending $\rho\mapsto(\rho(x_{1}),...,\rho(x_{r}))$ is a bijection and since
$G$ is a smooth affine variety, $\RC{r}(G)$ naturally inherits the structure of a smooth affine variety as well. Note that we are not assuming that an
algebraic variety is irreducible. Whenever $G$ is an irreducible algebraic set however, $\RC{r}(G)$ is irreducible, and consequently
$\XC{r}(G)$ is irreducible as well. 

Since an algebraic reductive group over $\C$ is always linear, we can assume that $G$ is a subgroup of $\GLm{N}$, for some $N$, and hence $\RC{r}(G)\subset\C^{rN^{2}}$. So, $\RC{r}(G)$ inherits the \textit{ball topology}.  Given a set of generators $f_1,...,f_k$ of the ring of invariants $\C[\RC{r}(G)]^G$, $\XC{r}(G)$ also inherits the ball topology from the embedding of $\XC{r}(G)$ into $\C^k$ given by $[\rho]\mapsto (f_1(\rho),...,f_k(\rho))$.  In this topology $\XC{r}(G)$ is Hausdorff and has a countable basis.  Although the ball topology is dependent on an embedding a priori, an affine embedding corresponds exactly to a set of generators for the associated ring, but all choices result in the same homeomorphism type, so the ball topology is intrinsic.  Also, in the ball topology, at each point in $\XC{r}(G)$ there is a neighborhood homeomorphic to a real cone over a space with Euler characteristic $0$ (\cite{Su}).

Given a compact Lie group $K$, for brevity we also call the orbit space $\XC{r}(K)=\RC{r}(K)/K$ a \emph{$K$-character variety} of $\F_{r}$. Note however
that $\XC{r}(K)$ is generally only a semi-algebraic set, and does not equal, in general, the set of real points of a complex variety.  In this case, the topology, also Hausdorff with a countable basis, is the quotient topology.  $\XC{r}(K)$ is compact since $K$ is 
compact.  Likewise, it is path-connected whenever $K$ is path-connected.

\begin{defn}
Let $\rho:\Gamma\to G$ be a representation into a reductive complex algebraic group. If the image of $\rho$ does not lie in a parabolic subgroup
of $G$, then $\rho$ is called \emph{irreducible} . If, for every parabolic $P$ containing $\rho(\Gamma)$ there is a Levi factor $L\subset P$ such
that $\rho(\Gamma)\subset L$, then $\rho$ is called \emph{completely reducible}.
\end{defn}

For $\SLm{n}$ and $\GLm{n}$ the irreducible representations are exactly those that, with respect to their actions on $\C^n$, do not admit any 
proper (non-trivial) invariant subspaces.  Any representation that is not irreducible is called {\it reducible}.   Denote the set of 
reducible representations by $\RC{\Gamma}(G)^{red}$.  A point is called {\it stable} if the stabilizer is finite and if the orbit is closed.

The following theorem can be found in \cite{Si4}, building on earlier work in \cite[pages 54-57]{JM}.  Let $\p G=G/Z(G)$ where $Z(G)$ is the center. Note that the action of $\p G$ and $G$ define the same GIT quotients and the same orbit spaces and thus, since the $\p G$ action is effective, we will sometimes consider this action.

\begin{thm}[\cite{JM,Si4}]
Let $G$ be reductive. The irreducibles are exactly the stable points under the action of $\p G$ on $\RC{\Gamma}(G)$. Moreover, the completely reducibles are the polystable points.
\end{thm}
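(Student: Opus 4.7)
The plan is to derive both assertions from the Hilbert--Mumford numerical criterion together with the classical correspondence between one-parameter subgroups (1-PSs) of a reductive $G$ and its parabolic subgroups. For a 1-PS $\lambda:\C^*\to G$, set $P(\lambda):=\{g\in G : \lim_{t\to 0}\lambda(t)g\lambda(t)^{-1} \text{ exists}\}$; this is a parabolic with Levi factor $L(\lambda)=Z_G(\lambda)$, every parabolic of $G$ arises in this way, and any two Levi factors of $P(\lambda)$ are conjugate by an element of $P(\lambda)$. After choosing generators of $\Gamma$, one embeds $\RC{\Gamma}(G)\subset G^r$ as a closed $\p G$-stable subvariety and applies Hilbert--Mumford to the diagonal conjugation action componentwise; the key dictionary is that $\lim_{t\to 0}\lambda(t)\cdot\rho$ exists in $\RC{\Gamma}(G)$ if and only if $\rho(\Gamma)\subset P(\lambda)$, in which case the limit takes values in $L(\lambda)$.

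For the stability claim, suppose first that $\rho$ is irreducible. Then no non-trivial $\lambda$ has $\rho(\Gamma)\subset P(\lambda)$, so no destabilizing 1-PS exists and the orbit is closed. For the finite stabilizer condition, the $G$-stabilizer equals $Z_G(\rho(\Gamma))$; a Schur-type argument shows this centralizer lies in $Z(G)$ when $\rho$ is irreducible, since any non-central (semisimple or unipotent) element in $Z_G(\rho(\Gamma))$ would be contained in a proper Levi or in the unipotent radical of a proper parabolic that centralizes $\rho(\Gamma)$, forcing $\rho(\Gamma)$ into a proper parabolic. Conversely, if $\rho$ is reducible then $\rho(\Gamma)\subset P(\lambda)$ for some non-trivial $\lambda$: either $\rho$ is not completely reducible, in which case $\lim_{t\to 0}\lambda(t)\cdot\rho$ lies outside the orbit and so the orbit is not closed; or $\rho$ is completely reducible and we may conjugate so that $\rho(\Gamma)\subset L(\lambda)$, whereupon $\lambda$ centralizes $\rho(\Gamma)$ and descends to a non-trivial 1-PS in the $\p G$-stabilizer. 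Either way $\rho$ fails to be stable.

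For the polystability claim, if $\rho$ is completely reducible and $\rho(\Gamma)\subset P(\lambda)$, then by hypothesis some Levi of $P(\lambda)$ contains $\rho(\Gamma)$, and $P(\lambda)$-conjugacy of Levis produces $p\in P(\lambda)$ with $p\rho p^{-1}\in L(\lambda)$; hence $\lim_{t\to 0}\lambda(t)\cdot\rho=p\rho p^{-1}$ lies in the orbit, and Hilbert--Mumford yields closedness. Conversely, if the orbit is closed and $\rho(\Gamma)\subset P=P(\lambda)$, the limit exists in $L(\lambda)$ and, being in the closed orbit, must equal $g\rho g^{-1}$ for some $g\in G$; then $g^{-1}L(\lambda)g$ is a Levi of $P$ containing $\rho(\Gamma)$, giving complete reducibility for this parabolic, and the same argument applied to any parabolic containing $\rho(\Gamma)$ yields complete reducibility. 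The main technical obstacle is assembling the reductive structure theory underlying the parabolic/Levi-to-1-PS dictionary together with the correct form of Hilbert--Mumford for an affine $G$-variety; once these are in place, the equivalences reduce to the formal manipulations above.
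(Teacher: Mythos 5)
The paper does not prove this result; it is quoted from Johnson--Millson and Sikora, so there is no internal proof to compare against. Your proposal is the standard GIT argument used in those references---Hilbert--Mumford for the affine conjugation action together with the dictionary between one-parameter subgroups and parabolics with Levi decomposition---and the forward implications are correct as you present them.

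There is, however, a genuine gap in the converse direction, and it occurs in two places. In the ``orbit closed $\Rightarrow$ completely reducible'' step you write that, since $\sigma=\lim_{t\to 0}\lambda(t)\cdot\rho$ lies in the closed orbit, $\sigma=g\rho g^{-1}$ for some $g\in G$, ``then $g^{-1}L(\lambda)g$ is a Levi of $P$ containing $\rho(\Gamma)$.'' But $g^{-1}L(\lambda)g$ is a Levi of $g^{-1}P(\lambda)g$, and this equals $P(\lambda)$ only when $g\in N_G(P(\lambda))=P(\lambda)$, which is not automatic for an arbitrary conjugating element. The same issue is hidden in your ``reducible $\Rightarrow$ not stable'' case split: you assert that for $\rho$ not completely reducible the limit lies outside the orbit, but if the limit did land in the orbit via some $g\notin P(\lambda)$, no contradiction arises from your argument. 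What is needed is the sharper statement that whenever $\lim_{t\to 0}\lambda(t)\cdot\rho$ lies in $G\cdot\rho$, it already lies in $R_u(P(\lambda))\cdot\rho$ (so in particular the conjugator can be chosen in $P(\lambda)$). This is a real lemma---part of Kempf's instability theory, and spelled out explicitly in Bate--Martin--R\"ohrle's work on $G$-complete reducibility and in Richardson's treatment of conjugacy classes of $n$-tuples---and it does not reduce to the ``formal manipulations'' you describe once the parabolic/1-PS dictionary is in place. With that lemma supplied, the rest of your argument goes through; without it, the backward implications are unproved. A minor additional point: in the irreducible case you should restrict attention to 1-PS of $\p G$ (or equivalently non-central 1-PS of $G$), since central 1-PS always have $\rho(\Gamma)\subset P(\lambda)=G$ but carry no information.
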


\begin{defn}
The reducibles $\XC{\Gamma}(G)^{red}$ are the image of the projection $$\RC{\Gamma}(G)^{red}\subset \RC{\Gamma}(G)\longrightarrow \XC{\Gamma}(G).$$
\end{defn}

Since $\RC{r}(G)\cong G^r$ all points are smooth, and since $\XC{r}(G)$ is an affine quotient of a reductive group, there exists $\rho^{ss}\in [\rho]$ which has a closed orbit and corresponds to a completely reducible representation.  Thus, for $G$ either $\SLm{n}$ or $\GLm{n}$ we can assume it is in block diagonal form.  In other words, $\rho^{ss}\leftrightarrow (X_1,...,X_r)$ where $X_i$ all have the same block diagonal form (if they are irreducible then there would be only one block).  These representations induce a semi-simple module structure on $\C^n$.  We denote the set of semi-simple representations by $\RC{r}(G)^{ss}$.  We note that $\RC{r}(G)^{ss}/G\cong \XC{r}(G)$ since all extended orbits have a semi-simple representative, and that the semisimple representations are also the completely reducible representations which are also the polystable representations.  Likewise, we denote the irreducible representations (those giving simple actions on $\C^n$) by $\RC{r}(G)^s$ and their quotient by $\XC{r}(G)^s$.

\subsection{The Determinant Fibration}\label{fibration}
In order to compare $\SLm{n}$-character varieties to $\GLm{n}$-character varieties, the following setup will be useful. 
The usual exact sequence of groups given by the determinant of an invertible matrix\begin{equation} \SLm{n}\to\GLm{n}\stackrel{\det}{\to}\mathbb{C}^{*}\label{eq:sequence}\end{equation} induces (by fixing generators of $\F_{r}$, as before) what we will call the \emph{determinant map}:\begin{eqnarray*} \det:\XC{r}(\GLm{n}) & \to & \hm(\F_r,\mathbb{C}^{*})\cong\left(\mathbb{C}^{*}\right)^{r}\\
{}[\rho] & \mapsto & \det(\rho),\end{eqnarray*} where $\det(\rho)=(\det(X_{1}),...,\det(X_{r}))$, for $\rho=(X_{1},...,X_{r})\in\RC{r}(\GLm{n})$.
Note that the map is clearly well-defined on conjugation classes. Considering the algebraic torus $\left(\mathbb{C}^{*}\right)^{r}=\hm(\F_{r},\mathbb{C}^{*})=\XC{r}(\C^{*})$ as an algebraic group (with identity $\mathbf{1}=(1,...,1)$ and componentwise multiplication)
it is immediate that the $\SLm{n}$-character variety is the ``kernel'' of the determinant map, $\XC{r}(\SLm{n})=\det^{-1}(\mathbf{1})$. Therefore,
the sequence (\ref{eq:sequence}) induces another exact sequence \begin{equation}\label{eq:globalsequence} \XC{r}(\SLm{n})\to\XC{r}(\GLm{n})\stackrel{\det}{\to}\left(\mathbb{C}^{*}\right)^{r}.\end{equation} In this way, $\SLm{n}$-character varieties appear naturally as subvarieties
of $\GLm{n}$-character varieties. 

Note also that $\XC{r}(\GLm{n})$ can be viewed as a $\XC{r}(\C^{*})$-space, as it admits a well-defined action of this torus. That is,
we can naturally define $\rho\cdot \lambda\in\XC{r}(\GLm{n})$, given $\rho\in\XC{r}(\GLm{n})$ and $\lambda\in\XC{r}(\C^{*})$.  Given that $\p\SLm{n}=\GLm{n}\aq\C^{*}$, it is easy to see that the corresponding quotient is the $\p\SLm{n}$-character variety:\[
\XC{r}(\p\SLm{n})=\XC{r}(\GLm{n})\aq\XC{r}(\C^{*}).\]

Also, $\GLm{n}^r$ is a quasi-affine sub-variety of $\glm{n}^r$.  In fact, it is the principal open set defined by the product of the 
determinants of generic matrices.  Since the determinant is an invariant function and taking invariants commutes with localizing at those 
invariants,  we have $$\C[\GLm{n}^r]^{\GLm{n}}\approx \C[\glm{n}^r\aq \GLm{n}]\left[\frac{1}{\det(\xb_1)\cdots\det(\xb_r)}\right], $$ 
where $\C[\glm{n}^r\aq \GLm{n}]\left[\frac{1}{\det(\xb_1)\cdots\det(\xb_r)}\right]$ is the localization at the product of determinants. 

We now prove how the fixed determinant character varieties, complex and compact, relate to the non-fixed determinant character varieties.  Identify the cyclic group of order $n$, $\mathbb{Z}_n:=\mathbb{Z}/n\mathbb{Z}$, with $Z(\SLm{n})\cong Z(\SUm{n})$.

\begin{thm}\label{bundletheorem}
The following are isomorphisms of semi-algebraic sets:
\begin{enumerate}
\item $\XC{r}(\GLm{n})\cong\XC{r}(\SLm{n})\times_{\XC{r}(\mathbb{Z}_{n})}\XC{r}(\GLm{1})$ 
\item $\XC{r}(\Um{n})\cong\XC{r}(\SUm{n})\times_{\XC{r}(\mathbb{Z}_{n})}\XC{r}(\Um{1})$.
\end{enumerate}
\end{thm}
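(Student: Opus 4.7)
The key construction is the map
\[
\Phi:\RC{r}(\SLm{n})\times\RC{r}(\C^{*})\longrightarrow\RC{r}(\GLm{n}),
\qquad (\sigma,\lambda)\longmapsto\big(\lambda_{1}\sigma(x_{1}),\dots,\lambda_{r}\sigma(x_{r})\big),
\]
together with the analogous map with $\SUm{n}$ and $\Um{1}=S^{1}$ in place of $\SLm{n}$ and $\C^{*}$. My plan is to show that $\Phi$ is a $\mathbb{Z}_{n}^{r}$-quotient map, and then to descend the quotient through the conjugation action to get the stated isomorphism.

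First I would verify surjectivity: given $\rho\in\RC{r}(\GLm{n})$, for each generator $x_{i}$ pick any $n$th root $\alpha_{i}\in\C^{*}$ of $\det\rho(x_{i})$ and set $\lambda_{i}:=\alpha_{i}$, $\sigma(x_{i}):=\alpha_{i}^{-1}\rho(x_{i})\in\SLm{n}$. That this defines a homomorphism uses nothing more than freeness of $\F_{r}$. Next, I would identify the fiber over $\rho$: if $(\sigma,\lambda)$ and $(\sigma',\lambda')$ both map to $\rho$, then $\sigma'(x_{i})=\zeta_{i}\sigma(x_{i})$ and $\lambda'_{i}=\zeta_{i}^{-1}\lambda_{i}$ for scalars $\zeta_{i}$ satisfying $\zeta_{i}^{n}=1$. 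Thus $\Phi$ is the quotient by the free $\mathbb{Z}_{n}^{r}=\RC{r}(\mathbb{Z}_{n})$ action
\[
\zeta\cdot(\sigma,\lambda)=\bigl((\zeta_{i}\sigma(x_{i}))_{i},(\zeta_{i}^{-1}\lambda_{i})_{i}\bigr),
\]
exhibiting $\RC{r}(\GLm{n})$ as $\bigl(\RC{r}(\SLm{n})\times\RC{r}(\C^{*})\bigr)/\mathbb{Z}_{n}^{r}$.

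The second (and more delicate) step is to descend through the conjugation actions. Since $\C^{*}$ acts trivially on $\RC{r}(\C^{*})$, and the $\GLm{n}$-action on $\RC{r}(\SLm{n})$ factors through $\p\GLm{n}$ and agrees with the $\SLm{n}$-action on orbits, $\Phi$ is equivariant for the $\GLm{n}$-action on the target and for conjugation by $\SLm{n}$ on the first factor of the source (trivial on the second). The $\mathbb{Z}_{n}^{r}$-action above commutes with conjugation because it is by central scalars, so it descends to $\XC{r}(\SLm{n})\times\XC{r}(\C^{*})$. Taking the GIT quotient of the source by $\GLm{n}$ then produces
\[
\bar\Phi:\XC{r}(\SLm{n})\times\XC{r}(\GLm{1})\longrightarrow\XC{r}(\GLm{n}),
\]
still surjective, with fibers the orbits of $\mathbb{Z}_{n}^{r}=\XC{r}(\mathbb{Z}_{n})$ (using that $\mathbb{Z}_{n}$ is abelian and central). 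Interpreting the balanced product notation $\times_{\XC{r}(\mathbb{Z}_{n})}$ as the quotient of $\XC{r}(\SLm{n})\times\XC{r}(\GLm{1})$ by the diagonal $\mathbb{Z}_{n}^{r}$-action induced by the inclusion $\mathbb{Z}_{n}\hookrightarrow Z(\SLm{n})$ and the inclusion $\mathbb{Z}_{n}\hookrightarrow\C^{*}$, this is exactly the desired isomorphism (1).

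For part (2), the same argument works verbatim with $\SUm{n}$, $S^{1}=\Um{1}$, and the determinant taking values in $S^{1}$ instead of $\C^{*}$; one uses that the $n$th power map $S^{1}\to S^{1}$ is surjective with kernel the $n$th roots of unity, so the analogous $\Phi$ is again a $\mathbb{Z}_{n}^{r}$-cover. All quotients are here in the category of semi-algebraic sets, so the isomorphism is of semi-algebraic sets as claimed. The main obstacle I foresee is purely bookkeeping: confirming that the free $\mathbb{Z}_{n}^{r}$-action on the product of representation varieties really descends to an action on the character varieties that is compatible with $\bar\Phi$ and produces precisely the advertised balanced product, and that no subtlety arises in passing between the $\SLm{n}$-action and the $\GLm{n}$-action on $\RC{r}(\SLm{n})$ when taking GIT quotients.
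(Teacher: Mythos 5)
Your proof is essentially the same as the paper's: the map $\Phi((A_i),(\lambda_i))=(\lambda_iA_i)$ realizing $\GLm{n}\cong\SLm{n}\times_{\mathbb{Z}_n}\C^*$ (raised to the $r$th power), followed by commuting the finite $\mathbb{Z}_n^r$-quotient past the conjugation quotient using that $\SLm{n}$ (equivalently $\GLm{n}$ modulo its center) acts trivially on the $(\C^*)^r$ factor, is exactly the chain of isomorphisms the paper writes out. The one step you flag as a ``bookkeeping obstacle'' --- that the free $\mathbb{Z}_n^r$-action commutes with the reductive-group quotient and so descends to the advertised balanced product --- is precisely the content of the paper's displayed equalities (\ref{eq:DoubleQuot}) and does go through because $\mathbb{Z}_n^r$ is finite and central, so you are in fact done.
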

\begin{proof}
We first note that $\XC{r}(\Um{1})\cong(S^{1})^{r}$ and $\XC{r}(\GLm{1})\cong(\C^{*})^{r}$,
and thus $\XC{r}(\mathbb{Z}_{n})\cong\mathbb{Z}_{n}^{r}$, as the
groups involved are Abelian.

The determinant map (\ref{eq:sequence}) defines a principal $\SLm{n}$-bundle $\SLm{n}\hookrightarrow\GLm{n}\to\C^*$, which also expresses
$\GLm{n}\cong\SLm{n}\rtimes\C^{*}$ as a semidirect product since there exists a homomorphic section ($\SLm{n}$ is a normal subgroup). 

Let $\mathbb{Z}_{n}$ correspond to $n^{\text{th}}$ roots of unity
$\omega_{k}=e^{\frac{2\pi ik}{n}}$. As algebraic sets one can show
directly, by the mapping $(A,\lambda)\mapsto\lambda A$, that $\GLm{n}\cong(\SLm{n}\times\C^{*})\aq\mathbb{Z}_{n}:=\SLm{n}\times_{\mathbb{Z}_{n}}\C^{*}$
where $\mathbb{Z}_{n}$ acts by $\omega_{k}\cdot(g,\lambda)=(g\omega_{k},\omega_{k}^{-1}\lambda)$
and $\C^{*}$ is the center of $\GLm{n}$. This implies that as algebraic
sets \begin{eqnarray}
\XC{r}(\GLm{n}) & \cong & ((\SLm{n}\times\C^{*})\aq\mathbb{Z}_{n})^{r}\aq\SLm{n}\nonumber \\
 & \cong & (\SLm{n}^{r}\times(\C^{*})^{r})\aq\mathbb{Z}_{n}^{r})\aq\SLm{n}\label{eq:DoubleQuot}\\
 & \cong & (\SLm{n}^{r}\times(\C^{*})^{r})\aq\SLm{n})\aq\mathbb{Z}_{n}^{r}\nonumber \\
 & \cong & \XC{r}(\SLm{n})\times_{\mathbb{Z}_{n}^{r}}(\C^{*})^{r},\nonumber \end{eqnarray}
 since the action of $\mathbb{Z}_{n}^{r}$ commutes with the action
of $\SLm{n}$ which is trivial on $(\C^{*})^{r}$.

In the same way we obtain the other ``twisted product'' isomorphism $\XC{r}(\Um{n})\cong\XC{r}(\SUm{n})\times_{\mathbb{Z}_{n}^{r}}(S^{1})^{r}$.
\end{proof}

This result provides an explicit way to write $\XC{r}(\GLm{n})$ as a $\XC{r}(\SLm{n})$-bundle over the algebraic $r$-torus $(\C^*)^r$ and 
$\XC{r}(\Um{n})$ as a $\XC{r}(\SUm{n})$-bundle over the geometric $r$-torus $(S^1)^r$.

There are a number of consequences to Theorem \ref{bundletheorem}. 

\begin{cor}
$\XC{r}(\Um{n})$, respectively $\XC{r}(\GLm{n})$, is a manifold whenever $\XC{r}(\SUm{n})$, respectively $\XC{r}(\SLm{n})$, is a manifold.
\end{cor}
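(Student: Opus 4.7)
The plan is to deduce the corollary directly from the fibered-product descriptions in Theorem \ref{bundletheorem}. Recall that these say
\[
\XC{r}(\GLm{n}) \cong \left(\XC{r}(\SLm{n}) \times (\C^{*})^{r}\right)/\mathbb{Z}_{n}^{r}, \qquad
\XC{r}(\Um{n}) \cong \left(\XC{r}(\SUm{n}) \times (S^{1})^{r}\right)/\mathbb{Z}_{n}^{r},
\]
where the action of $\mathbb{Z}_{n}^{r}$ on each factor of the product is the one inherited from $\omega_{k}\cdot(g,\lambda)=(g\omega_{k},\omega_{k}^{-1}\lambda)$ via the isomorphism $\GLm{n}\cong (\SLm{n}\times\C^{*})\aq \mathbb{Z}_{n}$.

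First, I would observe that if $\XC{r}(\SLm{n})$ is a topological manifold with boundary, then so is the product $\XC{r}(\SLm{n})\times(\C^{*})^{r}$, since $(\C^{*})^{r}$ is a smooth manifold and the product of a manifold with boundary and a manifold without boundary is again a manifold with boundary. The same statement holds in the compact case with $(S^{1})^{r}$ in place of $(\C^{*})^{r}$.

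Second, I would verify that the $\mathbb{Z}_{n}^{r}$-action on the product is free. The key point is that $\mathbb{Z}_{n}^{r}$ already acts freely on the torus factor alone: on each $\C^{*}$-coordinate, the action is multiplication by an $n$-th root of unity, and since $\C^{*}$ has no torsion, only the identity root of unity fixes any point. The same is true for multiplication by roots of unity on $S^{1}$. Since freeness on one factor of a product action suffices for freeness on the product, the whole action of $\mathbb{Z}_{n}^{r}$ is free.

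Finally, a free action of a finite group on a Hausdorff topological manifold (with or without boundary) is automatically properly discontinuous, hence the orbit space is again a Hausdorff topological manifold with boundary and the projection is a finite covering map. Applying this to the free $\mathbb{Z}_{n}^{r}$-actions above gives the desired conclusion in both the complex and compact cases. The only subtle step is the verification of freeness, and this is handled by the torus factor; there is no genuine obstacle here beyond unpacking the explicit action written down in the proof of Theorem \ref{bundletheorem}.
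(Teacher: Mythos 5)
Your proof is correct and takes essentially the same route as the paper, which simply observes that the $\mathbb{Z}_n^r$-action is free and proper; you have unpacked that one-line argument into its constituent steps (freeness via the torus factor, proper discontinuity of finite free actions, and the descent of the manifold-with-boundary property to the quotient).
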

\begin{proof}
The action of $\mathbb{Z}_n^r$ is free and proper. 
\end{proof}

\begin{cor}\label{etalequiv}
$\XC{r}(\GLm{n})$ and $\XC{r}(\SLm{n})\times (\C^*)^r$ are \'etale equivalent.
\end{cor}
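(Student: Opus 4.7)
The plan is to simply unpack the previous theorem and observe that the relevant quotient map is a finite étale cover. By Theorem \ref{bundletheorem}, we have an isomorphism
\[
\XC{r}(\GLm{n})\;\cong\;\XC{r}(\SLm{n})\times_{\mathbb{Z}_n^r}(\C^*)^r,
\]
and by the argument already used in the preceding corollary, the action of $\mathbb{Z}_n^r$ on $\XC{r}(\SLm{n})\times(\C^*)^r$ is free. So the strategy is to exhibit the quotient map
\[
\pi:\XC{r}(\SLm{n})\times(\C^*)^r\longrightarrow\XC{r}(\SLm{n})\times_{\mathbb{Z}_n^r}(\C^*)^r\cong\XC{r}(\GLm{n})
\]
as an étale morphism, which, paired with the identity on $\XC{r}(\SLm{n})\times(\C^*)^r$, realizes an étale correspondence between the two spaces.

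First I would verify freeness explicitly: recalling from the proof of Theorem \ref{bundletheorem} that $\omega_k$ acts on $\SLm{n}\times\C^*$ by $\omega_k\cdot(g,\lambda)=(g\omega_k,\omega_k^{-1}\lambda)$, the induced action on the $r$-fold product descends through the quotient by simultaneous $\SLm{n}$-conjugation (which commutes with the scalar action) to a free action of $\mathbb{Z}_n^r$ on $\XC{r}(\SLm{n})\times(\C^*)^r$, as already noted. Second, I would invoke the standard fact that the quotient map by a free action of a finite group on a (possibly singular) affine variety is a finite étale Galois cover with Galois group $\mathbb{Z}_n^r$; this can be checked locally, since at each point the stabilizer is trivial and one may apply the local structure of free finite group actions (or equivalently verify unramifiedness and flatness directly from the invariant ring being a locally free module of rank $|\mathbb{Z}_n^r|=n^r$).

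With $\pi$ identified as finite étale, the required étale equivalence is immediate: the span
\[
\XC{r}(\GLm{n})\;\xleftarrow{\;\pi\;}\;\XC{r}(\SLm{n})\times(\C^*)^r\;\xrightarrow{\;\mathrm{id}\;}\;\XC{r}(\SLm{n})\times(\C^*)^r
\]
consists of étale morphisms, proving the corollary.

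The main obstacle, such as it is, lies in the second step: being precise about the category in which étale-ness is asserted, since $\XC{r}(\SLm{n})$ is in general singular. The cleanest approach is to argue at the level of the affine coordinate rings, showing that the inclusion $\C[\XC{r}(\GLm{n})]\hookrightarrow \C[\XC{r}(\SLm{n})\times(\C^*)^r]$ realizes the larger ring as a finite étale algebra over the smaller via the free $\mathbb{Z}_n^r$-action; this uses only that the action is free (so the ramification locus is empty) and that taking invariants under a finite group commutes with localization, the latter having already been noted in the preceding discussion.
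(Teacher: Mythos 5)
Your proposal follows the same overall route as the paper's proof: both identify the projection $\pi:\XC{r}(\SLm{n})\times(\C^*)^r\to\XC{r}(\SLm{n})\times_{\mathbb{Z}_n^r}(\C^*)^r\cong\XC{r}(\GLm{n})$ from Theorem \ref{bundletheorem} as the quotient by the free $\mathbb{Z}_n^r$-action and conclude that $\pi$ is \'etale. The one substantive difference is how \'etale-ness of that quotient is justified. The paper first establishes that $\XC{r}(\SLm{n})\times(\C^*)^r$ is \emph{normal} --- it is the GIT quotient by $\SLm{n}$ of the smooth (hence normal) variety $\SLm{n}^r\times(\C^*)^r$, and reductive GIT quotients of normal varieties are normal --- and only then cites \cite{DrJ} for the statement that the quotient of a \emph{normal} variety by a free finite group action is \'etale. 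You omit the normality step and instead assert that a free finite group quotient of an arbitrary (possibly singular, non-normal) affine variety over $\C$ is finite \'etale. That stronger claim is in fact true in characteristic zero, but the part you gesture at --- flatness, i.e.\ that the coordinate ring is locally free of rank $n^r$ over the invariant subring --- is the nontrivial half; freeness of the action gives unramifiedness essentially for free (trivial inertia groups), whereas flatness requires a genuine argument (e.g.\ completing at a closed point and using transitivity of $\mathbb{Z}_n^r$ on the fiber to exhibit the completion as a free module). The paper's normality detour exists precisely so that this step becomes a direct citation rather than an argument to be supplied. So the proposal is correct in outline and in spirit identical to the paper's proof, but to make it self-contained you would either need to fill in the flatness verification or insert the normality observation so the cited result applies as stated.
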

\begin{proof}
First note that $\SLm{n}^r\times (\C^*)^r$ is smooth and hence a normal variety.  This implies (see \cite{DrJ}) that $(\SLm{n}^r\times 
(\C^*)^r)\aq \SLm{n}=\XC{r}(\SLm{n})\times (\C^*)^r$ is also normal.   However, the GIT projection 
$$\XC{r}(\SLm{n})\times(\C^*)^r\to\XC{r}(\SLm{n})\times_{\mathbb{Z}_n^r}(\C^*)^r$$ is then \'etale because $\mathbb{Z}_n^r$ is 
finite and acts freely (see \cite{DrJ}).  Then by Theorem \ref{bundletheorem} $\XC{r}(\GLm{n})\cong \XC{r}(\SLm{n})\times_{\mathbb{Z}_n^r}(\C^*)^r$ 
which establishes the result.
\end{proof}

\begin{cor}\label{singularequivalence}
Let $[\rho]\in \XC{r}(\SLm{n})$ and let $[\psi]\in \XC{r}(\SUm{n})$. Then \begin{enumerate}
\item $[\rho]\in\XC{r}(\SLm{n})^{sing}$ if and only if $[\rho]\in\XC{r}(\GLm{n})^{sing}$
\item $[\rho]\in\XC{r}(\SLm{n})^{sm}$ if and only if $[\rho]\in\XC{r}(\GLm{n})^{sm}$
\item $[\psi]\in\XC{r}(\SUm{n})^{sing}$ if and only if $[\psi]\in\XC{r}(\Um{n})^{sing}$
\item $[\psi]\in\XC{r}(\SUm{n})^{sm}$ if and only if $[\psi]\in\XC{r}(\Um{n})^{sm}$.
\end{enumerate}
\end{cor}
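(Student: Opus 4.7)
The plan is to reduce all four statements to Corollary \ref{etalequiv} (for parts (1)--(2)) and to its semi-algebraic analogue (for parts (3)--(4)), combined with two standard facts: an \'etale morphism preserves smoothness and singularity at corresponding points in both directions, and a product $X\times Y$ is smooth at $(x,y)$ if and only if $X$ is smooth at $x$ and $Y$ is smooth at $y$.

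For parts (1) and (2), I will first unwind the isomorphism of Theorem \ref{bundletheorem}(1) to verify that the inclusion $\XC{r}(\SLm{n})\hookrightarrow \XC{r}(\GLm{n})$ (realized as $\det^{-1}(\mathbf{1})$) corresponds to the map $[\rho]\mapsto [([\rho],\mathbf{1})]$ into $\XC{r}(\SLm{n})\times_{\Zm{n}^r}(\C^*)^r$; this is immediate from the identification $(A,\lambda)\mapsto \lambda A$ used in the proof of that theorem. By Corollary \ref{etalequiv}, the natural projection
\[
\pi:\XC{r}(\SLm{n})\times(\C^*)^r\longrightarrow \XC{r}(\SLm{n})\times_{\Zm{n}^r}(\C^*)^r\cong \XC{r}(\GLm{n})
\]
is \'etale, and $\pi([\rho],\mathbf{1})$ is the image of $[\rho]$ in $\XC{r}(\GLm{n})$. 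Since $(\C^*)^r$ is smooth everywhere, smoothness of the product at $([\rho],\mathbf{1})$ is equivalent to smoothness of $\XC{r}(\SLm{n})$ at $[\rho]$; \'etaleness then yields the corresponding equivalence with smoothness of $\XC{r}(\GLm{n})$ at $[\rho]$. Taking complements in the smooth locus gives the singular statement.

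Parts (3) and (4) follow the same template applied to Theorem \ref{bundletheorem}(2). Here the key observation is that the $\Zm{n}^r$-action on $\XC{r}(\SUm{n})\times(S^1)^r$ is free, because its action on the second factor $(S^1)^r$ already is. Hence the quotient map is a finite covering and in particular a local isomorphism of semi-algebraic sets near each point, so it preserves the algebraic singular locus in the real sense used in Section \ref{charvar}. The argument then proceeds exactly as before, with $(S^1)^r$ (smooth everywhere) replacing $(\C^*)^r$.

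The main obstacle I anticipate is bookkeeping: I must carefully verify that the \'etale/covering maps correctly identify the relevant points, i.e.\ that $[\rho]\in\XC{r}(\SLm{n})\subset \XC{r}(\GLm{n})$ and $[\psi]\in \XC{r}(\SUm{n})\subset \XC{r}(\Um{n})$ lift to $([\rho],\mathbf{1})$ and $([\psi],\mathbf{1})$ respectively under the isomorphisms of Theorem \ref{bundletheorem}. This requires tracing through the free quotient by $\Zm{n}^r$ in the construction, but it introduces no new machinery beyond what has already been established.
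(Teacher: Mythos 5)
Your argument for parts (1) and (2) is essentially the paper's: both invoke Corollary \ref{etalequiv} and reduce the question to the $\XC{r}(\SLm{n})$ factor using smoothness of $(\C^*)^r$. The paper phrases this as an explicit tangent-space dimension count, $T_{[\rho]}(\XC{r}(\GLm{n}))\cong T_{[\rho]}(\XC{r}(\SLm{n}))\oplus\C^r$ together with $\dim_\C\XC{r}(\GLm{n})=\dim_\C\XC{r}(\SLm{n})+r$, but this is the same content.

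For parts (3) and (4) your route genuinely diverges from the paper's. The paper does \emph{not} run a parallel argument in the compact world: it instead deduces (3) and (4) directly from (1) and (2) via the identities $\dim_\C\XC{r}(K_\C)=\dim_\R\XC{r}(K)$ and $\dim_\C T_{[\psi]}\XC{r}(K_\C)=\dim_\R T_{[\psi]}\XC{r}(K)$, which relate the real semi-algebraic singular locus of $\XC{r}(K)$ to the complex algebraic singular locus of $\XC{r}(K_\C)$ (and are spelled out again later in Lemma \ref{ksinggsing}). You instead replay the complex argument on the compact side: the free $\Zm{n}^r$-action on $\XC{r}(\SUm{n})\times(S^1)^r$ makes the projection a finite covering, hence a local semi-algebraic isomorphism, and smoothness of $(S^1)^r$ then reduces matters to $\XC{r}(\SUm{n})$. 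That is a valid and self-contained alternative; its advantage is that it avoids any appeal to the $K$ versus $K_\C$ dimension comparison, keeping the compact case independent of the complex one. Its mild cost is that you must justify (as you note) that a finite free semi-algebraic quotient is locally a semi-algebraic isomorphism in the sense needed to preserve the real Zariski tangent space, and you must trace the point $[\psi]$ through the identification $(A,\lambda)\mapsto\lambda A$ of Theorem \ref{bundletheorem}(2) to confirm it lands at $([\psi],\mathbf{1})$; both are routine but should be made explicit. The paper's route is shorter precisely because it leans on machinery (the real/complex correspondence) already set up for other purposes in the same section.
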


\begin{proof}
First let $[\rho]\in \XC{r}(\SLm{n})$.  Corollary \ref{etalequiv} tells that  $\XC{r}(\SLm{n})\times(\C^*)^r\to\XC{r}(\GLm{n})$ is an \'etale equivalence and such mappings preserve tangent spaces, we conclude $$T_{[\rho]}(\XC{r}(\GLm{n}))\cong  T_{[\rho]}(\XC{r}(\SLm{n})\times (\C^*)^r)\cong T_{[\rho]}(\XC{r}(\SLm{n}))\oplus \C^r.$$

By counting dimensions and noticing $$\dim_\C\left(\XC{r}(\GLm{n})\right)=\dim_\C\left(\XC{r}(\SLm{n})\right)+r$$ results $(1)$ and $(2)$ follow.

Results $(3)$ and $(4)$ follow from $(1)$ and $(2)$ and the additional observations that $\dim_\C\left(\XC{r}(K_\C)\right)=\dim_\R\left(\XC{r}(K)\right)$ and $\dim_\C\left(T_{[\psi]}(\XC{r}(K_\C))\right)=\dim_\R\left(T_{[\psi]}(\XC{r}(K))\right)$.
\end{proof}

\begin{cor}
We have the following isomorphisms of character varieties,
\begin{enumerate}
\item $\XC{r}(\p\SLm{n})\cong\XC{r}(\SLm{n})\aq\mathbb{Z}_{n}^{r}$ 
\item $\XC{r}(\p\Um{n})\cong\XC{r}(\SUm{n})/\mathbb{Z}_{n}^{r},$ 
\end{enumerate}
the first in the category of algebraic varieties, and the second in
the category of semi-algebraic sets. 
\end{cor}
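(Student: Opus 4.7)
The plan is to reduce both statements to a direct application of Theorem \ref{bundletheorem} combined with the identity $\p\SLm{n}=\GLm{n}\aq\C^{*}$ (already noted in the paper), which gives the analogous statement at the level of character varieties,
$$\XC{r}(\p\SLm{n})\cong\XC{r}(\GLm{n})\aq\XC{r}(\C^{*})\cong\XC{r}(\GLm{n})\aq(\C^{*})^{r},$$
and similarly $\XC{r}(\p\Um{n})\cong\XC{r}(\Um{n})/(S^{1})^{r}$ in the compact case.

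First I would substitute the bundle description from Theorem \ref{bundletheorem} to rewrite
$$\XC{r}(\p\SLm{n})\cong\bigl(\XC{r}(\SLm{n})\times_{\mathbb{Z}_{n}^{r}}(\C^{*})^{r}\bigr)\aq(\C^{*})^{r}.$$
The next step is to identify the two $(\C^{*})^{r}$-actions involved. Writing a $\GLm{n}$ representation as $(\lambda_{i}A_{i})_{i=1}^{r}$ with $A_{i}\in\SLm{n}$ and $\lambda_{i}\in\C^{*}$, the torus $\XC{r}(\C^{*})$ acts on $\XC{r}(\GLm{n})$ by $\mu\cdot(\lambda_{i}A_{i})=(\mu_{i}\lambda_{i}A_{i})$, so under the isomorphism of Theorem \ref{bundletheorem} it acts only on the second factor by multiplication. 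In contrast, the $\mathbb{Z}_{n}^{r}$-action used to form the twisted product acts diagonally as $\omega\cdot([A_{i}],(\lambda_{i}))=([\omega_{i}A_{i}],(\omega_{i}^{-1}\lambda_{i}))$. A direct check shows these two actions commute, since $\mathbb{Z}_{n}^{r}$ and $(\C^{*})^{r}$ are abelian and the two prescriptions differ only on the second factor by multiplication by commuting scalars.

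Because the actions commute, I can interchange the order of the quotients:
$$\bigl(\XC{r}(\SLm{n})\times(\C^{*})^{r}\bigr)\aq\mathbb{Z}_{n}^{r}\aq(\C^{*})^{r}\;\cong\;\bigl(\XC{r}(\SLm{n})\times(\C^{*})^{r}\bigr)\aq(\C^{*})^{r}\aq\mathbb{Z}_{n}^{r}.$$
The inner quotient by $(\C^{*})^{r}$ is now straightforward: $(\C^{*})^{r}$ acts only on the free factor $(\C^{*})^{r}$, acts trivially on $\XC{r}(\SLm{n})$, and the quotient of the torus by itself is a point, so the inner quotient collapses to $\XC{r}(\SLm{n})$. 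Since the $\mathbb{Z}_{n}^{r}$-action on $(\C^{*})^{r}$ becomes trivial after this collapse, only its action on $\XC{r}(\SLm{n})$ by the center of $\SLm{n}$ survives, yielding $\XC{r}(\SLm{n})\aq\mathbb{Z}_{n}^{r}$, which proves (1).

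For (2), I repeat the argument verbatim with $K=\SUm{n}$ in place of $G=\SLm{n}$, the circle $S^{1}$ in place of $\C^{*}$, and the quotient topology in place of the GIT quotient; Theorem \ref{bundletheorem}(2) provides the analogous twisted-product description, and the commutation argument goes through unchanged in the category of semi-algebraic sets. The main point requiring care throughout is the interchange of quotients, which is why I would write out the commuting-action calculation explicitly; beyond that, every step is a formal consequence of the results already established.
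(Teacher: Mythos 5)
Your proposal is correct and follows essentially the same route as the paper: both start from Theorem \ref{bundletheorem}, identify the $(\C^{*})^{r}$-action as acting only on the second factor of the twisted product, interchange the order of the $(\C^{*})^{r}$- and $\mathbb{Z}_{n}^{r}$-quotients, and collapse the torus factor; you merely spell out the commutation check that the paper leaves implicit.
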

\begin{proof}
From the previous theorem we have \[
\XC{r}(\GLm{n})\cong\XC{r}(\SLm{n})\times_{\mathbb{Z}_{n}^{r}}(\C^{*})^{r}.\]
Taking the quotient of both sides by $(\C^{*})^{r}$ we can conclude $\XC{r}(\p\SLm{n})\cong\XC{r}(\SLm{n})\aq\mathbb{Z}_{n}^{r}$.
More precisely letting $\mu=(\mu_{1},...,\mu_{r})\in\left(\C^{*}\right)^{r}$
act only on the second factor of $\XC{r}(\SLm{n})\times(\C^{*})^{r}$
,\[
\mu\cdot\left([(A_{1},...,A_{r})],(\lambda_{1},...,\lambda_{r})\right)=\left([(A_{1},...,A_{r})],(\mu_{1}\lambda_{1},...,\mu_{r}\lambda_{r})\right),\]
and going through the isomorphisms in Equations (\ref{eq:DoubleQuot}),
one gets that the action on $\XC{r}(\GLm{n})$ corresponds to scalar
multiplication of each entry, so we obtain:\begin{eqnarray*}
\XC{r}(\p\SLm{n}) & \cong & \XC{r}(\GLm{n})\aq(\C^{*})^{r}\\
 & \cong & \left(\XC{r}(\SLm{n})\times_{\mathbb{Z}_{n}^{r}}(\C^{*})^{r}\right)\aq(\C^{*})^{r}\\
& \cong &\left(\left(\XC{r}(\SLm{n})\times(\C^{*})^{r}\right)\aq\mathbb{Z}_{n}^{r}\right)\aq(\C^{*})^{r}\\
& \cong &\left(\left(\XC{r}(\SLm{n})\times(\C^{*})^{r}\right)\aq(\C^{*})^{r}\right)\aq\mathbb{Z}_{n}^{r}\\
 & \cong & \XC{r}(\SLm{n})\aq\mathbb{Z}_{n}^{r},\end{eqnarray*}
as wanted. The other statement is analogous.
\end{proof}

\subsection{Examples}\label{ex:manifold}

We use the results in Section \ref{fibration} and the theorems from \cite{FlLa} to describe the homeomorphism types of the examples of $G$-character varieties of $\F_r$ known to be manifolds with boundary.  Let $\overline{B}_n$ denote a closed real ball of indicated dimension, and let $\{*\}$ denote the space consisting of one point.

One can show that whenever $\phi:\XC{r}(\SLm{n})\to M$ is an isomorphism (as affine varieties), then $\XC{r}(\SUm{n})\cong \phi(\XC{r}(\SUm{n}))\subset M$ (as semi-algebraic sets) by restricting $\phi$ to $\XC{r}(\SUm{n})\subset \XC{r}(\SLm{n})$ (see \cite{PS}).

We first consider the trivial case $(r,n)=(r,1)$.  In this case the conjugation action is trivial, and thus we deduce Table \ref{Fig:r1}.
\begin{table}[ht]
\begin{tabular}{l|l|l}
& & \\
   &Fixed Determinant& Non-fixed Determinant\\ 
\hline
& &\\
Complex&$\XC{r}(\SLm{1})\cong \{*\}$& $\XC{r}(\GLm{1})\ \cong (\C^*)^r$         \\ 
\hline
&  &\\
Compact&$\XC{r}(\SUm{1})\cong \{*\}$ & $\XC{r}(\Um{1})\cong (S^1)^r$ \\ 
\end{tabular}
\caption{Moduli of $(r,1)$-representations.}\label{Fig:r1}
\end{table}

We next consider the $r=1$ case.  The coefficients of the characteristic polynomial of a matrix $X$, $\{c_1(X),...,c_{n-1}(X),\det(X)\},$ define conjugate invariant regular mappings $\XC{1}(\SLm{n})\to \C^{n-1}$ and $\XC{1}(\GLm{n})\to \C^{n-1}\times \C^*$ which are isomorphisms. Thus we conclude Table \ref{Fig:1n}.

\begin{table}[ht]
\begin{tabular}{l|l|l}
& & \\
   &Fixed Determinant& Non-fixed Determinant\\ 
\hline
& &\\
Complex&$\XC{1}(\SLm{n})\cong \C^{n-1}$& $\XC{1}(\GLm{n})\ \cong \C^{n-1}\times \C^*$         \\ 
\hline
&  &\\
Compact&$\XC{1}(\SUm{n})\cong \overline{B_{n-1}}$ & $\XC{1}(\Um{n})\cong \overline{B_{n-1}}\times S^1$ \\ 
\end{tabular}

\caption{Moduli of $(1,n)$-representations.}\label{Fig:1n}
\end{table}

\begin{rem}
With respect to Table \ref{Fig:1n}, there are no irreducible representations, despite it being smooth.  For this reason these moduli 
spaces should perhaps be regarded as everywhere singular, since we will see that irreducibles will generally be smooth points for $r\geq 2$.
\end{rem}

In the $r=2$ case we have a well-known isomorphism $\XC{2}(\SLm{2})\to \C^3$ given by $[(A,B)]\mapsto (\tr{A},\tr{B},\tr{AB})$; see \cite{G9,Vo,FK}.  More generally there is an isomorphism $\glm{2}^2\aq \p\GLm{2}\to \C^5$ given by $$[(A,B)]\mapsto (\tr{A},\tr{B},\tr{AB},\det(A),\det(B)).$$  Therefore we tabulate Table \ref{22case}.

\begin{table}[ht]
\begin{tabular}{l|l|l}
& & \\
   &Fixed Determinant& Non-fixed Determinant\\ 
\hline
& &\\
Complex&$\XC{2}(\SLm{2}) \cong \C^3$& $\XC{2}(\GLm{2})\cong \C^3\times( \C^*\times \C^*)$      \\ 
\hline
&  &\\
Compact&$\XC{2}(\SUm{2})\cong \overline{B_3}$ & $\XC{2}(\Um{2})\cong \overline{B_3}\times (S^1\times S^1)$ \\ 
\end{tabular}
\caption{Moduli of $(2,2)$-representations.}\label{22case}
\end{table}

In \cite{FlLa} the fixed determinant cases in Table \ref{3223cases} are established.

\begin{table}[ht]
\begin{tabular}{l|l|l}
& & \\
&Fixed Determinant& Non-fixed Determinant\\ 
\hline
& &\\
Compact $(3,2)$  &$\XC{3}(\SUm{2})\cong S^6$ & $\XC{3}(\Um{2})\cong S^6\times_{\mathbb{Z}_2^3}(S^1\times S^1\times S^1)$ \\ 
\hline
& & \\
Compact $(2,3)$ &$\XC{2}(\SUm{3})\cong S^8$ & $\XC{2}(\Um{3})\cong S^8\times_{\mathbb{Z}_3^2} (S^1\times S^1)$\\
\end{tabular}
\caption{Moduli of compact $(3,2)$ and $(2,3)$ representations.}\label{3223cases}
\end{table}

\begin{rem}
The complex $(3,2)$ and $(2,3)$ cases are left out in Table \ref{3223cases} since we will show they are not manifolds.  
In each of these cases, the complex moduli space of fixed determinant is a branched double cover of complex affine space 
which deformation retract to a sphere.   The explicit scheme structures are known as well.  See \cite{FlLa,La1}.
\end{rem}

We conjectured in \cite{FlLa} that this covers all the cases where a topological manifold with boundary can arise.  We will prove this conjecture in Section \ref{manifoldsection}.

\section{Singularities}\label{singularsection}

\subsection{Algebro-Geometric Singularities}
There are a number of equivalent ways to describe smoothness for affine varieties. 

Let $X=V(f_1,...,f_k)\subset\C^n$ be an affine variety.  Then its tangent space at the point $\mathbf{p}=(p_1,...,p_n)\in X$ is the vector 
space $$T_{\mathbf{p}}(X)=\{(v_1,...,v_n)\in \C^n\ |\ \sum_{j=1}^{n} 
\frac{\partial f_i}{\partial x_j}\bigg|_{\mathbf{p}}(v_j-p_j)=0 \text{ for all } i\}.$$ 

This coincides with the more general definition $T_{\mathbf{p}}(X)=\left(\mathfrak{m}_{\mathbf{p}}/\mathfrak{m}_{\mathbf{p}}^2\right)^*$ 
which is the dual to the cotangent space $\mathfrak{m}_{\mathbf{p}}/\mathfrak{m}_{\mathbf{p}}^2$ where $\mathfrak{m}_{\mathbf{p}}$ is 
a maximal ideal in $\C[X]$ corresponding to $\mathbf{p}$ by Hilbert's Nullstellensatz. 

\begin{defn}
The singular locus of $X$ is defined to be $X^{sing}=\{\mathbf{p}\in X\ |\ \mathrm{dim}_\C T_{\mathbf{p}}(X) > \mathrm{dim}_{Krull}X\}.$  
\end{defn}

The complement of this set, $X-X^{sing}$, is a complex manifold.  If $X$ is irreducible, then $X$ is path-connected and furthermore 
$X-X^{sing}$ is likewise path-connected.  See \cite{Sh2}.

Let $c=n-\mathrm{dim}_{Krull}X$.  And let $J$ be the $k\times n$ Jacobian matrix of partial derivatives of the $k$ relations defining 
$X\subset \C^n$.  We can assume $n$ is minimal.  Then $X^{sing}$ is concretely realized as the affine variety determined by the determinant of the $c\times c$ minors of $J$.  This ideal is referred to as the Jacobian ideal.  In this way, $X^{sing}$ is seen to be a proper sub-variety of $X$.

For example, in \cite{HP} it is shown (for $r\geq 3$) that $\XC{r}(\SLm{2})^{sing}=\XC{r}(\SLm{2})^{red}$.  In \cite{La1}, explicitly computing 
the Jacobian ideal, a similar result is also shown:  $\XC{2}(\SLm{3})^{red}=\XC{2}(\SLm{3})^{sing}$.

\subsection{Tangent Spaces}
Let $\g$ be the Lie algebra of $G$.  Having addressed the $r=1$ and $n=1$ cases, we now assume that $r,n\geq 2$.  

The following two lemmas are classical, and in fact are true for any algebraic Lie group over $\R$ or $\C$.  See \cite{Weil}.  
For a representation $\rho:\F_{r}\to G$, let us denote by $\mathfrak{g}_{\mathrm{Ad}_{\rho}}$ the $\F_{r}$-module $\mathfrak{g}$ with 
the adjoint action via $\rho$. That is, any word $w\in\F_{r}$ acts as $w\cdot X=\Ad_{\rho(w)}X=\rho(w)X\rho(w)^{-1}$, for $X\in\mathfrak{g}$. 
Consider the cocycles, coboundaries and cohomology of $\F_{r}$ with coefficients in this module. Explicitly:

\begin{eqnarray*}
\CC & := & \{u:\F_{r}\to\g\ |\ u(xy)=u(x)+\Ad_{\rho(x)}u(y)\}\\
B^{1}(\F_{r};\mathfrak{g}_{\mathrm{Ad}_{\rho}}) & := & \{u:\F_{r}\to\g\ |\ u(x)=\Ad_{\rho(x)}X-X\ \text{for some}\ X\in\g\}\\
H^{1}(\F_{r};\mathfrak{g}_{\mathrm{Ad}_{\rho}}) & := & Z^{1}(\F_{r};\mathfrak{g}_{\mathrm{Ad}_{\rho}})/B^{1}(\F_{r};\mathfrak{g}_{\mathrm{Ad}_{\rho}}).\end{eqnarray*}

\begin{lem}\label{cocycle} Let $G$ be any algebraic Lie group over $\R$ or $\C$.
$$T_{\rho}\left(\RC{r}(G)\right)\cong \g^r\cong \CC.$$
\end{lem}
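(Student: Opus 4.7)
The plan is to prove the two isomorphisms in turn and then observe that they fit together via a natural geometric interpretation.

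For the first isomorphism $T_\rho(\RC{r}(G)) \cong \g^r$: the evaluation map $\RC{r}(G) \to G^r$, $\rho \mapsto (\rho(x_1), \ldots, \rho(x_r))$, already noted in Section \ref{charvar} to be an isomorphism of smooth affine varieties, yields $T_\rho(\RC{r}(G)) \cong T_{\rho(x_1)}G \oplus \cdots \oplus T_{\rho(x_r)}G$. Right translation by $\rho(x_i)^{-1}$ identifies each summand with $\g = T_e G$, giving the desired isomorphism. Concretely, if $\rho_t$ is a smooth curve in $\RC{r}(G)$ with $\rho_0 = \rho$, the corresponding tuple is $\bigl(\dot\rho_t(x_i)\,\rho(x_i)^{-1}\bigr|_{t=0}\bigr)_{i=1}^r \in \g^r$.

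For the second isomorphism $\g^r \cong Z^1(\F_r;\gAd)$, consider the restriction map $\Phi: u \mapsto (u(x_1), \ldots, u(x_r))$. Injectivity: iterating the cocycle identity together with $u(e) = 0$ and the derived rule $u(x^{-1}) = -\Ad_{\rho(x^{-1})}u(x)$ shows that $u$ is determined on every reduced word by its values on the generators. Surjectivity: given $(X_1, \ldots, X_r) \in \g^r$, set $u(x_i) := X_i$ and extend inductively on word length by the cocycle rule $u(xy) = u(x) + \Ad_{\rho(x)}u(y)$; since $\F_r$ has no relations, this prescription is consistent on all reduced expressions and defines a cocycle on $\F_r$.

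Finally, I would verify that the composite $T_\rho(\RC{r}(G)) \to \g^r \to Z^1(\F_r;\gAd)$ coincides with the natural geometric map $u(w) := \dot\rho_t(w)\,\rho(w)^{-1}\bigr|_{t=0}$. Differentiating the homomorphism identity $\rho_t(xy) = \rho_t(x)\rho_t(y)$ at $t = 0$ and right-translating by $\rho(xy)^{-1}$ produces
\[
u(xy) \;=\; \dot\rho_t(x)\rho(x)^{-1}\bigr|_{t=0} + \rho(x)\bigl(\dot\rho_t(y)\rho(y)^{-1}\bigr|_{t=0}\bigr)\rho(x)^{-1} \;=\; u(x) + \Ad_{\rho(x)}u(y),
\]
confirming the cocycle condition. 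The only genuine subtlety is surjectivity of $\Phi$, which relies on the freeness of $\F_r$; for a general finitely presented $\Gamma$, one must cut down $\g^r$ by the linearizations of the relators, and this is precisely where the proof would fail for non-free groups.
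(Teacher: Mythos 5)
Your proof is correct and is the standard classical argument that the paper itself defers to by citing Weil rather than giving a proof: the evaluation map plus right-translation trivialization gives $T_\rho(\RC{r}(G))\cong\g^r$, and the freeness of $\F_r$ makes the restriction-to-generators map $Z^1(\F_r;\gAd)\to\g^r$ a bijection (a cocycle is exactly a section of $\g\rtimes_{\Ad_\rho}\F_r\to\F_r$, so it is freely determined by its values on $x_1,\dots,x_r$). The concluding verification that the composite is $w\mapsto\dot\rho_t(w)\rho(w)^{-1}\big|_{t=0}$ is the right sanity check, and your closing observation about where the argument would break for non-free $\Gamma$ matches the caveat the paper makes just after stating the lemma.
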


Let $\mathsf{Orb}_\rho=\{g\rho g^{-1}\ |\ g\in G\}$ be the $G$-orbit of $\rho$, and let $\mathsf{Stab}_\rho=\{g\in G\ |\ g\rho g^{-1}=\rho\}$ be the $G$-stabilizer (or isotropy subgroup).

\begin{lem}\label{coboundary}Let $G$ be any algebraic Lie group over $\R$ or $\C$.
$$
T_\rho(\mathsf{Orb}_\rho)\cong \g/\{X\in\g\ | \ \Ad_{\rho(x)}X=X\}\cong B^1(\F_r;\mathfrak{g}_{\mathrm{Ad}_\rho}).$$

\end{lem}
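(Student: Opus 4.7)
The plan is to factor the orbit map through the stabilizer quotient, identify the Lie algebra of the stabilizer, and then match that Lie algebra quotient to $\CB$ via the natural coboundary map. Since Lemma \ref{cocycle} is already available, I do not need to re-derive $T_\rho \RC{r}(G)\cong \CC$; I only need to place $T_\rho(\mathsf{Orb}_\rho)$ inside it.

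First I would consider the orbit map $\phi:G\to \RC{r}(G)$, $g\mapsto g\rho g^{-1}$. Its image is $\mathsf{Orb}_\rho$ and it descends to a smooth morphism $G/\mathsf{Stab}_\rho\to \mathsf{Orb}_\rho$. Because $G$ is an algebraic Lie group over $\R$ or $\C$ and $\mathsf{Stab}_\rho = Z_G(\rho(\F_r))$ is a closed algebraic subgroup, the orbit is a smooth homogeneous space whose tangent space at $\rho$ is canonically $\g/\mathrm{Lie}(\mathsf{Stab}_\rho)$. Differentiating the defining equations $g\rho(x_i)g^{-1}=\rho(x_i)$ along $g=\exp(tX)$ at $t=0$ gives $\Ad_{\rho(x_i)}X-X=0$ for each generator, and conversely any such $X$ exponentiates into $\mathsf{Stab}_\rho$. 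Since $x_1,\ldots,x_r$ generate $\F_r$, this is equivalent to $\Ad_{\rho(x)}X=X$ for every $x\in\F_r$, which yields the first isomorphism.

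For the second isomorphism I would introduce the linear map $\delta:\g\to \CB$ defined by $\delta(X)(w)=\Ad_{\rho(w)}X-X$. By the very definition of $\CB$, the map $\delta$ is surjective, and its kernel is exactly $\{X\in\g : \Ad_{\rho(w)}X=X\ \forall w\in\F_r\} = \mathrm{Lie}(\mathsf{Stab}_\rho)$. The First Isomorphism Theorem then produces $\g/\mathrm{Lie}(\mathsf{Stab}_\rho)\cong \CB$, closing the chain. The only mildly delicate point is the smoothness of $\mathsf{Orb}_\rho$ and the identification of its tangent space with $\g/\mathrm{Lie}(\mathsf{Stab}_\rho)$ uniformly over the real and complex algebraic categories; this is a standard fact about algebraic homogeneous spaces, so there is no real obstacle. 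The argument is essentially a bookkeeping version of Weil's classical infinitesimal deformation computation, and it is compatible with the identification $\g^r\cong \CC$ from Lemma \ref{cocycle}, under which $\delta(X)$ corresponds to the $r$-tuple $(\Ad_{\rho(x_i)}X-X)_{i=1}^{r}$.
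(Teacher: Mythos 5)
Your proof is correct, and it is precisely the classical Weil-style computation that the paper itself invokes (it gives no proof of Lemma \ref{coboundary}, only citing \cite{Weil}): identify $T_\rho(\mathsf{Orb}_\rho)$ with $\g/\mathrm{Lie}(\mathsf{Stab}_\rho)$ via the orbit map, compute $\mathrm{Lie}(\mathsf{Stab}_\rho)$ as the $\Ad_\rho$-fixed subspace, and observe that the coboundary map $\delta:\g\to B^1$ is surjective with that same kernel.
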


It is not always the case that the tangent space to the quotient is the quotient of tangent spaces.  Consider representations from the free group of rank $1$ into $\SLm{3}$.  The ring of invariants is two dimensional and the ring is generated by $\tr{\xb}$ and $\tr{\xb^{-1}}$.  So the ideal is zero and the ring is free.  Consequently it is smooth and the representation sending 
everything to the identity (having maximal stabilizer) is a non-singular point.  This illustrates that there can be smooth points in the quotient that have positive-dimensional stabilizer.  At these 
points, $T_{\rho}(\RC{r}(G)\aq G)\not\cong T_{\rho}(\RC{r}(G))/T_{\rho}(\mathsf{Orb}_\rho),$ seen by simply comparing dimensions.

We also note that if we replace free groups by finitely generated groups $\Gamma$ then the above isomorphisms require a more careful treatment 
due to the possible existence of nilpotents in the coordinate ring of the scheme associated to $\RC{\Gamma}(G)$ (see \cite{Si4}).

Recall that $\RC{r}(G)^s$ is the set of irreducible representations, and $\XC{r}(G)^{s}=\RC{r}(G)^s/G$.  An action is called {\it locally free} if the stabilizer is finite, and is called proper if the action $G\times X\to X\times X$ is a proper mapping.   In general, the quotient by a proper locally free action of a reductive group on a smooth manifold is an orbifold (a space locally modeled on finite quotients of $\R^n$).

The following lemma can be found in \cite[pages 54-57]{JM}.  See also \cite{G5,G8}.

\begin{lem}Let $G$ be reductive and $r,n\geq 2$.
The $\p G$ action on $\RC{r}(G)^s$ is locally free and proper.
\end{lem}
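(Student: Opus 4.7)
The plan is to verify local freeness and properness separately.

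For local freeness, the argument is essentially immediate from the preceding theorem: since $\RC{r}(G)^s$ is exactly the set of stable points of the $\p G$-action on $\RC{r}(G)$, and stability requires by definition that the isotropy group be finite, every $\rho \in \RC{r}(G)^s$ has finite $\p G$-stabilizer. This is the defining condition of a locally free action, and no additional work is needed beyond citing that characterization.

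For properness, my plan is to produce a local model via Luna's slice theorem. At each stable $\rho$, Luna's theorem gives a $\mathsf{Stab}_\rho$-invariant slice $S$ through $\rho$ together with a $\p G$-equivariant étale map $\p G \times_{\mathsf{Stab}_\rho} S \to \RC{r}(G)$ whose image is a saturated open $\p G$-invariant neighborhood of $\mathsf{Orb}_\rho$. Because local freeness makes $\mathsf{Stab}_\rho$ finite, this presents a neighborhood of each orbit as a finite quotient of the $\p G$-space $\p G \times S$, on which the $\p G$-action is visibly proper (free on the total space, locally trivial, then divided by a finite group). Since orbits of stable points are closed in $\RC{r}(G)$ and the quotient $\XC{r}(G)^s$ is Hausdorff in the ball topology of Section~\ref{charvar}, the preimages of compact sets under the orbit map $\p G \times \RC{r}(G)^s \to \RC{r}(G)^s \times \RC{r}(G)^s$, $(g,\rho)\mapsto(\rho,g\rho g^{-1})$, can be covered by finitely many such local models, each contributing a compact piece. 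This globalizes to give properness.

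The main obstacle I anticipate is rigorously bridging Luna's algebraic (étale) slice statement with the analytic ball topology in which properness is being asserted throughout the paper; one must verify that the étale neighborhoods produced by Luna are actually open in the classical topology and that the finite-group quotient preserves properness. A secondary technical point is handling the finite stabilizers carefully so that the local orbifold structure yields honest properness of the $\p G$-action on the smooth model rather than merely a proper action of the quotient groupoid.
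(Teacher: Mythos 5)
The paper does not actually prove this lemma; it cites it from Johnson--Millson \cite[pages 54--57]{JM} and Goldman \cite{G5,G8}, so your proposal is supplying a proof where the authors only give a reference.

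Your local-freeness argument is correct and complete: by the preceding theorem irreducibles are exactly the stable points, and stability means finite stabilizer and closed orbit, which is the definition of locally free. Nothing more is needed there.

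The properness argument, however, has a genuine gap in the globalization step. Saying that the preimage of a compact set ``can be covered by finitely many such local models, each contributing a compact piece'' is not a proof that the preimage is compact: a subset can be covered by open sets on which things behave well without itself being closed or bounded. Luna's slice theorem gives you, around each $\rho$, a saturated open $U_\rho\subset\RC{r}(G)^s$ over which the $\p G$-action looks like left-multiplication on $\p G\times_{\mathsf{Stab}_\rho}S$ and is therefore proper \emph{over} $U_\rho$; but properness is a local condition on the \emph{target} of the map $\p G\times\RC{r}(G)^s\to\RC{r}(G)^s\times\RC{r}(G)^s$, which is a product, and your cover is only of the first factor. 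To close the argument you need to produce, for each pair $(\rho,\psi)$, a neighborhood in the product over which the action map is proper. For $[\rho]\ne[\psi]$ you want disjoint saturated neighborhoods (this is where Hausdorffness/separatedness of $\XC{r}(G)^s$ actually enters, and you should say so), giving an empty preimage, and for $[\rho]=[\psi]$ you reduce to the local model. Equivalently, and more cleanly, factor the action map through $\RC{r}(G)^s\times_{\XC{r}(G)^s}\RC{r}(G)^s$: the second arrow is a closed immersion because the geometric quotient is separated, and the first is finite because stabilizers are finite and orbits are closed; both are proper, hence so is the composite. That is the standard Mumford argument, and it is what the Hausdorffness you invoke is really a shadow of. As a side remark, the \'etale-to-analytic bridge you flag as a concern is not actually an obstacle in this paper's conventions: the authors define \'etale to mean local analytic isomorphism in the ball topology, so Luna neighborhoods are automatically open classically.
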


Therefore, $\RC{r}(G)^s/G=\RC{r}(G)^s/\p G$ are orbifolds.

\begin{lem}\label{lem:smooth}
For $G$ equal to $\SLm{n}, \GLm{n}, \SUm{n},$ or $\Um{n}$ and $r,n\geq 2$, the associated $\p G$ action on $\RC{r}(G)^s$ is free.  Therefore, in these cases $\RC{r}(G)^s/G$ is a smooth manifold.
\end{lem}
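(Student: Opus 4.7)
My plan is to reduce everything to Schur's lemma in its classical form. The previous lemma already delivers properness and local freeness for any reductive $G$, so the only new content is upgrading ``finite stabilizer'' to ``trivial stabilizer'' for the $\p G$-action in the four classical cases, after which the quotient manifold structure follows from the standard fact that a free proper action of a Lie group on a smooth manifold has smooth manifold quotient.

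The core step is computing $\mathsf{Stab}_\rho$ inside $G$ for $\rho \in \RC{r}(G)^s$. By definition of irreducibility (no proper non-trivial invariant subspace of $\C^n$ for $G = \SLm{n}, \GLm{n}$, and similarly for the compact forms with respect to the standard action on $\C^n$), any $g \in G$ with $g\rho(x_i)g^{-1} = \rho(x_i)$ for $i = 1,\dots,r$ lies in the commutant of $\rho(\F_r)$ in $\End(\C^n)$. Classical Schur's lemma then forces $g$ to be a scalar matrix $\lambda\, \id$ with $\lambda \in \C^*$. Intersecting with $G$:
\begin{itemize}
\item for $G = \GLm{n}$, this gives $\lambda \in \C^* = Z(\GLm{n})$;
\item for $G = \SLm{n}$, we additionally need $\lambda^n = 1$, so $\lambda \in \mathbb{Z}_n = Z(\SLm{n})$;
\item for $G = \Um{n}$, we need $|\lambda| = 1$, so $\lambda \in S^1 = Z(\Um{n})$;
\item for $G = \SUm{n}$, both conditions together give $\lambda \in \mathbb{Z}_n = Z(\SUm{n})$.
\end{itemize}
In every case $\mathsf{Stab}_\rho = Z(G)$, so the induced $\p G = G/Z(G)$-stabilizer of $\rho$ is trivial, i.e., the $\p G$-action on $\RC{r}(G)^s$ is free.

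Combining this with the previous lemma, which asserts that the $\p G$-action on $\RC{r}(G)^s$ is proper and locally free, we conclude that the $\p G$-action is proper and free. Since $\RC{r}(G) \cong G^r$ is a smooth manifold (real or complex as appropriate), the open subset $\RC{r}(G)^s$ is also a smooth manifold, and a theorem of Koszul (the quotient manifold theorem) implies that $\RC{r}(G)^s/\p G$ is a smooth manifold of dimension $\dim \RC{r}(G)^s - \dim \p G$. Since $Z(G)$ acts trivially on $\RC{r}(G)$ by conjugation, $\RC{r}(G)^s/G = \RC{r}(G)^s/\p G$, finishing the proof.

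I don't expect a real obstacle here: the only subtlety is making sure the Schur-type argument is applied correctly for the compact forms $\SUm{n}, \Um{n}$, but since irreducibility is defined via the action on $\C^n$ (not some compact-group representation theory notion), Schur's lemma applies verbatim and the stabilizer computation is purely algebraic. The hypothesis $r,n \geq 2$ only enters to ensure irreducible representations exist and that one actually has a nonempty $\RC{r}(G)^s$ to quotient.
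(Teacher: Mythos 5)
Your proof is correct and takes essentially the same route as the paper: reduce to the claim that $\mathsf{Stab}_\rho = Z(G)$ and deduce this from Schur's lemma applied to $\C^n$ as a simple module. The only cosmetic difference is that the paper first invokes Burnside's theorem to observe that $\{X_1,\dots,X_r\}$ generates all of $M_{n\times n}$ as an algebra and then applies Schur to $\C^n$ as a module over $M_{n\times n}$, whereas you apply Schur directly to $\C^n$ as a (simple) module over the subalgebra generated by $\rho(\F_r)$ and conclude that the commutant is $\C\cdot\id$. Your version is slightly leaner since the Burnside step is not actually needed; both yield the same identification of $\mathsf{Stab}_\rho$ with the scalars in $G$, and your handling of the intersections with each of the four groups and the final quotient-manifold argument matches the intent of the paper's terse closing sentence.
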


\begin{proof}
Let $\rho=(X_1,...,X_r)\in \RC{r}(G)^s$.  Then by Burnside's Theorem (see \cite{Lang}) the collection $\{X_1,...,X_r\}$ generates all 
of $n\times n$ matrices $M_{n\times n}$ as an algebra, since $r>1$ and they form an irreducible set of matrices.  Suppose there exists $g\in G$ so that for all $1\leq k\leq r$ we have $gX_kg^{-1}=X_k$.  Then $g$ stabilizes all of $M_{n\times n}$.

Consider $M=\C^n$ as a module over $R=M_{n\times n}.$  Clearly, $M$ is a simple module since no non-trivial proper subspaces are left invariant by all matrices.  Let $f_g$ be the automorphism of $\C^n$ defined by mapping $v\mapsto gv$.  Then $f_g$ defines an $R$-module automorphism of $M$ since $g$ stabilizes all of $R$.  Thus by Shur's Lemma the action of $g$ is equal to the action of a scalar; that is, $g$ is central.
\end{proof}

Lemma \ref{lem:smooth} and Lemma \ref{tangentspace} (see Section \ref{luna}) together immediately imply the following corollary.

\begin{cor}\label{cohomology}Let $G=\SLm{n}, \GLm{n}, \SUm{n},$ or $\Um{n}$.
If $[\rho]\in \XC{r}(G)^{s}$ and $r,n\geq 2$, then
$$T_{[\rho]}(\XC{r}(G))\cong H^1(\F_r;\mathfrak{g}_{\mathrm{Ad}_\rho}).$$
\end{cor}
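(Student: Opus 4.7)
The plan is to identify the tangent space at the smooth point $[\rho]\in\XC{r}(G)^s$ with the normal space to the orbit $\mathsf{Orb}_\rho\subset\RC{r}(G)$, and then read off the cohomology from Lemmas \ref{cocycle} and \ref{coboundary}.

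First, I would collect the relevant structural facts. Since $[\rho]$ is irreducible, Lemma \ref{lem:smooth} says that the $\p G$-action on a neighborhood of $\rho$ in $\RC{r}(G)^s$ is free; equivalently, the $G$-stabilizer of $\rho$ equals the center $Z(G)$. In particular the orbit $\mathsf{Orb}_\rho$ is closed (irreducibles are stable, by the theorem of Sikora quoted before Definition~3), smooth, and of the maximal possible dimension $\dim G-\dim Z(G)$. Its tangent space is, by Lemma \ref{coboundary}, isomorphic to $B^1(\F_r;\mathfrak{g}_{\mathrm{Ad}_\rho})$, sitting inside the ambient tangent space $T_\rho(\RC{r}(G))\cong Z^1(\F_r;\mathfrak{g}_{\mathrm{Ad}_\rho})$ provided by Lemma \ref{cocycle}.

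Next, I would invoke Lemma \ref{tangentspace} (the consequence of the Luna slice theorem stated in Section~\ref{luna}). Applied to the closed orbit $\mathsf{Orb}_\rho$, whose isotropy is the reductive group $Z(G)$ acting trivially on $T_\rho(\RC{r}(G))$, it yields a canonical identification
\[
T_{[\rho]}\bigl(\XC{r}(G)\bigr)\ \cong\ T_\rho(\RC{r}(G))\big/T_\rho(\mathsf{Orb}_\rho).
\]
(Morally, the Luna slice at $\rho$ is a $Z(G)$-invariant complement to $T_\rho(\mathsf{Orb}_\rho)$, and since $Z(G)$ acts trivially on $\g$ via $\Ad$, the slice itself descends isomorphically to a neighborhood of $[\rho]$ in $\XC{r}(G)^s$, which is a manifold by Lemma~\ref{lem:smooth}.) Substituting the two identifications from the previous paragraph now gives
\[
T_{[\rho]}\bigl(\XC{r}(G)\bigr)\ \cong\ Z^1(\F_r;\mathfrak{g}_{\mathrm{Ad}_\rho})\big/B^1(\F_r;\mathfrak{g}_{\mathrm{Ad}_\rho})\ =\ H^1(\F_r;\mathfrak{g}_{\mathrm{Ad}_\rho}),
\]
as desired.

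The main potential obstacle is the compact case $G=\SUm{n}$ or $\Um{n}$, where $\XC{r}(G)$ is only a semi-algebraic set rather than an algebraic quotient; here one cannot directly quote Luna. However, since the stabilizer is again central by Lemma~\ref{lem:smooth} and the $K$-action on $\RC{r}(K)^s$ is free modulo the center, the quotient is a smooth manifold and the same normal-space identification holds by an elementary slice argument for proper free actions of compact groups. Modulo verifying that Lemma~\ref{tangentspace} is stated in sufficient generality to cover both the complex and the compact settings, the proof is immediate.
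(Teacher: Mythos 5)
Your proof follows essentially the same route as the paper: both reduce to Lemma~\ref{lem:smooth} (freeness of the $\p G$-action on irreducibles, so $\mathsf{Stab}_\rho = Z(G)$ acts trivially on $\g_{\Ad_\rho}$, hence trivially on $H^1$) combined with Lemma~\ref{tangentspace}, which then immediately gives $T_{[\rho]}(\XC{r}(G))\cong T_0(H^1\aq Z(G))=H^1(\F_r;\g_{\Ad_\rho})$. Your caveat about the compact groups is well-placed: Lemma~\ref{tangentspace} is stated only for complex reductive $G$, and the compact analogue (via Mostow's slice theorem) only appears later as Lemma~\ref{compactslice} in Section~\ref{manifoldsection}.
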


For $G=\SLm{n}$ we can calculate that $\dim_{\C} \XC{r}(G)^s=(n^2-1)(r-1)$ and for $K=\SUm{n}$, we have $\dim_{\R} \XC{r}(K)^s=(n^2-1)(r-1)$.  
Likewise, for $G=\GLm{n}$ we calculate $\dim_{\C} \XC{r}(G)^s=n^2(r-1)+1$ and for $K=\Um{n}$, $\dim_{\R} \XC{r}(K)^s=n^2(r-1)+1$.

Let $\XC{r}(G)^{sm}=\XC{r}(G)-\XC{r}(G)^{sing}$ be the smooth stratum, which is a complex manifold, open and dense as a subspace of $\XC{r}(G)$.  
The calculation of dimensions above and Corollary \ref{cohomology} imply the following lemma which expresses the fact that the irreducibles 
not only form a smooth manifold but are naturally contained in the smooth stratum of the variety.

\begin{lem}\label{smoothirrep} Let $r,n\geq 2$ and $G$ be one of $\SLm{n}$ or $\GLm{n}$.  Then the following equivalent statements hold:
\begin{enumerate}
\item[]
 \item  $\XC{r}(G)^s\subset \XC{r}(G)^{sm}$
 \item $\XC{r}(G)^{sing}\subset \XC{r}(G)^{red}$.
\end{enumerate}
\end{lem}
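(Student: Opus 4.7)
The plan is to first observe that statements (1) and (2) are logically equivalent. For $G=\SLm{n}$ or $\GLm{n}$, every representation is either irreducible or reducible, so $\XC{r}(G)=\XC{r}(G)^s\sqcup\XC{r}(G)^{red}$; and by definition $\XC{r}(G)=\XC{r}(G)^{sm}\sqcup\XC{r}(G)^{sing}$. Thus the inclusion $\XC{r}(G)^s\subset\XC{r}(G)^{sm}$ is the contrapositive of $\XC{r}(G)^{sing}\subset\XC{r}(G)^{red}$, and it suffices to prove (1).

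To prove (1), I would fix $[\rho]\in\XC{r}(G)^s$ and compute $\dim_\C T_{[\rho]}\XC{r}(G)$ using Corollary \ref{cohomology}, which identifies this tangent space with $H^1(\F_r;\mathfrak{g}_{\mathrm{Ad}_\rho})$. Lemmas \ref{cocycle} and \ref{coboundary} give $\dim_\C Z^1=r\dim_\C\g$ and $\dim_\C B^1=\dim_\C\g-\dim_\C\g^{\mathrm{Ad}_\rho}$, where $\g^{\mathrm{Ad}_\rho}:=\{X\in\g:\Ad_{\rho(x)}X=X\text{ for all }x\in\F_r\}$ is the Lie algebra of $\mathsf{Stab}_\rho$. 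The Schur's Lemma argument used in the proof of Lemma \ref{lem:smooth} shows that for irreducible $\rho$ the stabilizer $\mathsf{Stab}_\rho$ equals the center $Z(G)$: this is the finite group $\mathbb{Z}_n$ (so $\dim\g^{\mathrm{Ad}_\rho}=0$) for $G=\SLm{n}$, and the one-dimensional $\C^*$ of scalar matrices (so $\dim\g^{\mathrm{Ad}_\rho}=1$) for $G=\GLm{n}$. A short substitution then yields
\[
\dim_\C H^1(\F_r;\mathfrak{g}_{\mathrm{Ad}_\rho}) \;=\; \begin{cases} (r-1)(n^2-1), & G=\SLm{n},\\ (r-1)n^2+1, & G=\GLm{n}, \end{cases}
\]
which agrees exactly with the dimensions of $\XC{r}(G)^s$ recorded just above the statement of the lemma.

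Finally, since $G$ is connected, $\RC{r}(G)\cong G^r$ is irreducible, hence so is $\XC{r}(G)$; its Krull dimension therefore coincides with the dimension of the non-empty Zariski-open subset $\XC{r}(G)^s$ (which is non-empty for $r,n\geq 2$). Hence at an irreducible class $[\rho]$ one has $\dim_\C T_{[\rho]}\XC{r}(G)=\dim_{\mathrm{Krull}}\XC{r}(G)$, so $[\rho]$ is a smooth point and (1) follows. I do not foresee a substantial obstacle: once the cohomological tangent-space description is in hand, the whole argument is a dimension count, and the only minor subtlety is that one must identify $\dim\XC{r}(G)$ with $\dim\XC{r}(G)^s$, which is a consequence of irreducibility of $\RC{r}(G)\cong G^r$.
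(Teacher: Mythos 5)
Your proof is correct and follows essentially the same route as the paper: the paper deduces the lemma from Corollary \ref{cohomology} together with the dimension count $\dim_\C \XC{r}(G)^s$ recorded just before the lemma statement, which is exactly the computation $\dim H^1 = (r-1)\dim\g+\dim\g^{\Ad_\rho}$ that you carry out via Lemmas \ref{cocycle} and \ref{coboundary} and the Schur-type identification of $\mathsf{Stab}_\rho$ with the center. You merely make explicit the final step that $\XC{r}(G)$ is irreducible (since $G^r$ is), so that $\dim\XC{r}(G)^s=\dim_{\mathrm{Krull}}\XC{r}(G)$, a point the paper leaves implicit.
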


The next lemmas address important technical points that we will need in our proofs.

\begin{lem}\label{algebraicset}
$\XC{r}(G)^{red}$ is an algebraic set; that is, a subvariety of $\XC{r}(G)$.
\end{lem}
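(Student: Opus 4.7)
The plan is to prove the lemma in two steps: first show that the set $\RC{r}(G)^{red}$ of reducible representations is Zariski closed in $\RC{r}(G)$, then transport this closedness through the GIT quotient $\pi : \RC{r}(G) \to \XC{r}(G)$ to obtain closedness of the image.

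For the first step, a representation $\rho = (X_1, \ldots, X_r) \in \RC{r}(G) \cong G^r$ is reducible precisely when the matrices $X_i$ admit a common proper nonzero invariant subspace $V \subseteq \C^n$. For each $d$ with $1 \leq d \leq n-1$, I would form the incidence variety
$$I_d = \{ (V, \rho) \in \mathrm{Gr}(d,n) \times \RC{r}(G) \ : \ X_i V \subseteq V \text{ for } i = 1, \ldots, r \},$$
which is Zariski closed because, in any standard affine chart of $\mathrm{Gr}(d,n)$ (representing $V$ by a $d \times n$ matrix in row-reduced form), the condition $X_i V \subseteq V$ is cut out by the vanishing of suitable minors in the entries of $V$ and $X_i$. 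Since $\mathrm{Gr}(d,n)$ is a complete (projective) variety, the projection $\mathrm{Gr}(d,n) \times \RC{r}(G) \to \RC{r}(G)$ is a closed morphism, so the image of $I_d$ is Zariski closed. Taking the finite union over $d = 1, \ldots, n-1$ exhibits $\RC{r}(G)^{red}$ as a Zariski closed subset of $\RC{r}(G)$; it is also $G$-invariant, since conjugation by $g \in G$ sends an invariant subspace $V$ to the invariant subspace $gV$.

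For the second step, I would invoke the standard GIT fact that, for a reductive group $G$ acting on an affine variety $X$ with quotient $\pi : X \to X \aq G$, the image $\pi(Z)$ of any closed $G$-invariant subset $Z \subseteq X$ is Zariski closed in $X \aq G$ (indeed, $\pi(Z)$ is naturally identified with $Z \aq G$, since the inclusion $\C[X]^G \to \C[Z]^G$ is surjective by reductivity). Applied to $Z = \RC{r}(G)^{red} \subseteq \RC{r}(G)$, this gives $\XC{r}(G)^{red} = \pi(\RC{r}(G)^{red})$ as a Zariski closed subvariety of $\XC{r}(G)$, which is the required conclusion.

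The only substantive point is the closedness of the incidence condition together with the completeness of the Grassmannian; once those are in hand, the descent to the character variety is automatic from reductivity, so I do not expect any serious obstacle beyond these routine verifications.
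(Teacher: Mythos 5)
Your argument is correct, but it takes a genuinely different route from the paper. The paper's proof is a one-liner that leans on two prior results: the theorem (quoted from Johnson--Millson and Sikora) that the irreducible representations are exactly the GIT-stable points of the $\p G$-action on $\RC{r}(G)$, plus the general GIT fact that the stable locus is Zariski open and saturated, so its image in the quotient is open and hence the complement $\XC{r}(G)^{red}$ is closed. Your proof avoids invoking the theory of stability entirely: you show closedness of $\RC{r}(G)^{red}$ directly by constructing the incidence variety $I_d \subset \mathrm{Gr}(d,n)\times\RC{r}(G)$ and using completeness of the Grassmannian to project it down, and only then invoke reductivity to push the closed $G$-invariant set to a closed subset of the affine quotient. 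Your approach is more self-contained and elementary (requiring only the projective elimination of the Grassmannian and the exactness of taking $G$-invariants for reductive $G$), at the cost of being longer; the paper's approach is shorter but rests on the identification of irreducibles with stable points, which is a nontrivial result cited from the literature. Both arguments ultimately perform the same descent to the quotient, and both are sound.
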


\begin{proof}
The irreducibles are exactly the GIT stable points (zero dimensional stabilizer and closed orbits) 
and in general these are Zariski open, which implies the complement is an algebraic set (see \cite{Do}).
\end{proof}

\begin{lem}\label{adherence:1}
Suppose there exists a set $\mathcal{O}\subset \XC{r}(G)^{sing}\cap\XC{r}(G)^{red}$ that is dense with respect to 
the ball topology in $\XC{r}(G)^{red}$.  Then $\XC{r}(G)^{sing}=\XC{r}(G)^{red}$.
\end{lem}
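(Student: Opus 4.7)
The plan is to combine the hypothesis with the containment $\XC{r}(G)^{sing}\subset\XC{r}(G)^{red}$ from Lemma \ref{smoothirrep} and exploit the fact that the singular locus is closed in the ball topology.

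First I would observe that by Lemma \ref{smoothirrep} we already have one inclusion, $\XC{r}(G)^{sing}\subset\XC{r}(G)^{red}$, so only the reverse inclusion $\XC{r}(G)^{red}\subset\XC{r}(G)^{sing}$ needs to be established. Next, I would note that $\XC{r}(G)^{sing}$ is a Zariski-closed subvariety of $\XC{r}(G)$ (it is cut out by the appropriate Jacobian ideal, as recalled in the definition of the singular locus), and since Zariski-closed subsets of a complex affine variety embedded in some $\C^N$ are automatically closed in the ball (i.e.\ classical) topology, $\XC{r}(G)^{sing}$ is closed in the ball topology.

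With those two observations in hand the argument is essentially a one-line closure computation. By hypothesis $\mathcal{O}\subset\XC{r}(G)^{sing}$, so taking ball-topology closures gives
$$\overline{\mathcal{O}}\,\subset\,\overline{\XC{r}(G)^{sing}}\,=\,\XC{r}(G)^{sing}.$$
On the other hand, $\mathcal{O}$ is dense in $\XC{r}(G)^{red}$ in the ball topology, which means $\XC{r}(G)^{red}\subset\overline{\mathcal{O}}$. Chaining these inclusions yields $\XC{r}(G)^{red}\subset\XC{r}(G)^{sing}$, and combined with Lemma \ref{smoothirrep} we conclude the desired equality.

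There is no real obstacle here; the only subtle point worth stating cleanly is the compatibility between the Zariski and ball topologies (namely that Zariski-closed implies ball-closed), which is immediate from the definition of the ball topology via a closed embedding into affine space as recalled in Section \ref{charvar}. The same argument applies verbatim to the compact case $\XC{r}(K)$ once one uses that $\XC{r}(K)^{sing}$ is likewise real-algebraic and hence closed in its (quotient = ball) topology.
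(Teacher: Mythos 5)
Your proof is correct and follows essentially the same closure argument the paper uses, invoking Lemma \ref{smoothirrep} and the fact that the singular locus is ball-closed. A minor streamlining on your side: by chaining $\XC{r}(G)^{red}\subset\overline{\mathcal{O}}\subset\XC{r}(G)^{sing}\subset\XC{r}(G)^{red}$ you only need closedness of $\XC{r}(G)^{sing}$, whereas the paper also invokes Lemma \ref{algebraicset} to know that $\XC{r}(G)^{red}$ is a subvariety.
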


\begin{proof}
Since both $\XC{r}(G)^{sing}\subset \XC{r}(G)^{red}$ are subvarieties (by Lemmas \ref{smoothirrep} and \ref{algebraicset}), 
$\mathcal{O}$ is dense in both with respect to the ball topology.  This follows since $\mathcal{O}$ is dense in $\XC{r}(G)^{red}$ 
with respect to the ball topology and $\mathcal{O}\subset \XC{r}(G)^{sing}\cap\XC{r}(G)^{red}$. 
Thus $\XC{r}(G)^{sing}=\overline{\mathcal{O}}=\XC{r}(G)^{red}$, where $\overline{\mathcal{O}}$ is 
the closure of $\mathcal{O}$ in $\XC{r}(G)$ with respect to the (metric) ball topology.
\end{proof}

A set as in Lemma \ref{adherence:1} was called an {\it adherence set} in \cite{HP}.

\subsection{Denseness of reducibles with minimal stabilizer}

Now consider the following subvarieties of reducibles.  Recall that the $0$ vector space is not considered to be an irreducible sub-representation.

\begin{defn}\label{def:minstab}
Define $U_{r,n}\subset\XC{r}(\GLm{n})^{red}$ and $W_{r,n}\subset\XC{r}(\SLm{n})^{red}$
by: \begin{gather*}
U_{r,n}=\left\{ \left[\rho_{1}\oplus\rho_{2}\right]\in\XC{r}(\GLm{n}):\ \rho_{i}:\F_{r}\to\GLm{n_{i}}\mbox{ are irreducible}\right\} \\
W_{r,n}=\left\{ \left[\rho_{1}\oplus\rho_{2}\right]\in\XC{r}(\SLm{n}):\ \rho_{i}:\F_{r}\to\GLm{n_{i}}\mbox{ are irreducible}\right\} ,\end{gather*}where we consider
all possible decompositions $n=n_{1}+n_{2}$, with $n_{i}>0$.
\end{defn}
Note that a given $\rho\in U_{r,n}$ uniquely determines the integers $n_1$ and $n_2$, up to permutation.  We will refer to this
situation by saying that $\rho$ is of {\it reduced type} $[n_1, n_2]$. Similar remarks
and terminology apply to $W_{r,n}$.

It is clear that \begin{equation}
\XC{r}(\SLm{n})^{red}=\XC{r}(\GLm{n})^{red}\cap\XC{r}(\SLm{n})\label{eq:reducibles-GL-SL}\end{equation}
 and that \[W_{r,n}=U_{r,n}\cap\XC{r}(\SLm{n}).\]

The following lemma is likewise clear by the proof of Lemma \ref{lem:smooth}.

\begin{lem}
A representation $\rho$ is in $U_{r,n}$ if and only if $\mathsf{Stab}_{\rho}\cong(\C^{*})^{2}$.
Also, $\rho\in W_{r,n}$ if and only if $\mathsf{Stab}_{\rho}\cong\C^{*}$.
\end{lem}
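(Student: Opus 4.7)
The plan is to compute the stabilizer $\mathsf{Stab}_\rho$ of a polystable representative directly, using Schur's lemma and Burnside's theorem in the style of the proof of Lemma \ref{lem:smooth}. For the forward direction I would fix a polystable representative of the form $\rho = \rho_1 \oplus \rho_2$ with $\rho_i : \F_r \to \GLm{n_i}$ irreducible, and write $g \in \mathsf{Stab}_\rho \subset \GLm{n}$ in $(n_1,n_2)$-block form $g = (A_{ij})$. The relation $g\rho(x_k) = \rho(x_k)g$ on each free generator $x_k$ forces $A_{ii}$ to commute with every $\rho_i(x_k)$ and, for $i \neq j$, forces $A_{ij}$ to intertwine $\rho_j$ with $\rho_i$. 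Burnside's theorem gives that the $\rho_i(x_k)$ generate the full matrix algebra $M_{n_i\times n_i}$, so Schur's lemma forces $A_{ii} = \lambda_i I_{n_i}$. Schur's lemma on the off-diagonal blocks kills $A_{ij}$ provided $\rho_1 \not\cong \rho_2$; this non-isomorphism is the \emph{minimal stabilizer} convention from the section title and is the proper reading of the defining condition of $U_{r,n}$ (otherwise the stabilizer would contain a copy of $\GLm{2}$). This yields $\mathsf{Stab}_\rho \cong (\C^*)^2$.

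For the converse I would start with an arbitrary polystable $\rho$ and decompose $\rho \cong \bigoplus_i \rho_i^{\oplus k_i}$ into isotypic components, the $\rho_i$ being pairwise non-isomorphic irreducibles. By Schur's lemma the commutant of $\rho(\F_r)$ in $\GLm{n}$ is isomorphic to $\prod_i \GLm{k_i}$, and the assumption $\mathsf{Stab}_\rho \cong (\C^*)^2$ forces $\sum_i k_i^2 = 2$. Since the $k_i$ are positive integers, the only solution has exactly two summands with $k_1 = k_2 = 1$, exhibiting $\rho$ as $\rho_1 \oplus \rho_2$ with two non-isomorphic irreducible pieces, i.e. $\rho \in U_{r,n}$.

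The $\SLm{n}$ statement for $W_{r,n}$ follows by intersecting the above centralizer with $\SLm{n}$: the equation $\lambda_1^{n_1}\lambda_2^{n_2} = 1$ cuts $(\C^*)^2$ to a one-dimensional algebraic subgroup whose identity component is $\C^*$, and the same dimension count gives the converse. The two points requiring care are (i) the off-diagonal Schur step, which pins down the correct interpretation of $U_{r,n}$ and $W_{r,n}$ as requiring non-isomorphic summands, and (ii) the fact that the $\SLm{n}$-stabilizer can have up to $\gcd(n_1, n_2)$ connected components, so the isomorphism $\mathsf{Stab}_\rho \cong \C^*$ is to be read on identity components; neither changes the essence of the proof, which is a routine application of Schur plus Burnside.
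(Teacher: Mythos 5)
Your proof is correct and matches the argument the paper intends: the paper offers no explicit proof, remarking only that the lemma is ``likewise clear by the proof of Lemma~\ref{lem:smooth}'', which is precisely your Burnside--Schur block-by-block commutant computation. Your two caveats flag genuine imprecisions in the paper's wording: Definition~\ref{def:minstab} as written permits $\rho_1\cong\rho_2$ (forcing $n_1=n_2$), for which $\mathsf{Stab}_\rho\cong\GLm{2}$, so the biconditional requires reading non-isomorphism of the summands into $U_{r,n}$; and the $\SLm{n}$-stabilizer $\{\lambda^{n_1}\mu^{n_2}=1\}\subset(\C^*)^2$ has exactly $\gcd(n_1,n_2)$ connected components, so it is $\C^*\times\Zm{\gcd(n_1,n_2)}$ rather than literally $\C^*$. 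Both are harmless for the paper's purposes: the non-isomorphic locus is still dense, which is all the adherence argument (Lemma~\ref{adherence:1}) needs, and Theorem~\ref{case1} computes over $\GLm{n}$ with the genuinely two-dimensional torus $\mathsf{Stab}_\rho\cong(\C^*)^2$, transferring to $\SLm{n}$ via Corollary~\ref{singularequivalence} without ever invoking the structure of the $W_{r,n}$ stabilizer.
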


The strategy is now to show that $U_{r,n}$ and $W_{r,n}$ contain
only singularities. However, we must first establish the following
lemma.

\begin{lem}
\label{lem:GLm-dense}Let $r,n\geq 2$. $U_{r,n}$ is dense in $\XC{r}(\GLm{n})^{red}$
with respect to the ball topology.
\end{lem}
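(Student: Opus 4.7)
The plan is to start with any $[\rho] \in \XC{r}(\GLm{n})^{red}$, take a polystable representative $\rho = \rho_1 \oplus \cdots \oplus \rho_k$ with each $\rho_j$ irreducible and $k \geq 2$ (the inequality forced by reducibility of $[\rho]$), and to collapse all but one summand into a single irreducible block, producing an approximating class of reduced type $[m, n-m]$ where $m := n - \dim\rho_k$. If $k = 2$ already, then $[\rho] \in U_{r,n}$ and there is nothing to prove; hence assume $k \geq 3$, which forces $m \geq k-1 \geq 2$.

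Set $\sigma_0 := \rho_1 \oplus \cdots \oplus \rho_{k-1} \in \RC{r}(\GLm{m})$. The key step is to approximate $\sigma_0$ in the ball topology by irreducible representations $\sigma_j \in \RC{r}(\GLm{m})^s$. This rests on the assertion that $\RC{r}(\GLm{m})^s$ is ball-dense in $\RC{r}(\GLm{m})$ when $r, m \geq 2$: by Lemma \ref{algebraicset} it is Zariski-open in the irreducible affine variety $\RC{r}(\GLm{m}) \cong \GLm{m}^r$, and it is nonempty since irreducible pairs are easy to write down (for $r=m=2$ one may take $X_1$ regular semisimple with distinct eigenvalues and $X_2$ a cyclic permutation matrix, which together generate an irreducible subalgebra of $M_{m\times m}(\C)$ by Burnside; for larger $r$ or $m$ one adjoins arbitrary extra generators and embeds block-diagonally, then perturbs). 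A nonempty Zariski-open subset of an irreducible variety is Zariski-dense, hence ball-dense.

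Given such a sequence $\sigma_j \to \sigma_0$ in the ball topology on $\RC{r}(\GLm{m})$, the block-diagonal representations $\sigma_j \oplus \rho_k \in \RC{r}(\GLm{n})$ have reduced type $[m, n-m]$ and thus represent classes in $U_{r,n}$. Block-diagonal inclusion $\RC{r}(\GLm{m}) \times \RC{r}(\GLm{n-m}) \to \RC{r}(\GLm{n})$ is entrywise continuous, so $\sigma_j \oplus \rho_k \to \rho$ in $\RC{r}(\GLm{n})$; continuity of the GIT projection $\RC{r}(\GLm{n}) \to \XC{r}(\GLm{n})$, which holds because the generating invariants are polynomials and hence continuous in the ball topology, then yields $[\sigma_j \oplus \rho_k] \to [\rho]$ in $\XC{r}(\GLm{n})$. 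This places $[\rho]$ in the ball-closure of $U_{r,n}$, and since $[\rho]$ was an arbitrary point of $\XC{r}(\GLm{n})^{red}$, density follows. The only non-trivial input is the ball-density of the irreducible locus in $\RC{r}(\GLm{m})$, but this is immediate from the Zariski-openness recorded in Lemma \ref{algebraicset} together with the classical existence of irreducible representations of $\F_r$ into $\GLm{m}$ for $r,m\geq 2$; the remainder is a routine continuity assembly.
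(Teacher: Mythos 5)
Your argument is correct and follows essentially the same route as the paper's: pick a polystable representative with $\geq 3$ irreducible blocks, hold one irreducible summand fixed, approximate the remaining semi-simple part by irreducibles (using ball-density of the irreducible locus in the smaller representation variety), and pass to the limit in $\XC{r}(\GLm{n})$ via continuity of the polynomial invariants. The only differences are cosmetic — you collapse the first $k-1$ blocks where the paper fixes $\rho_1$ and collapses the rest — plus you spell out the Zariski-open-nonempty justification for density of irreducibles, which the paper takes for granted with a parenthetical remark.
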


\begin{proof}
When $n=2$, $U_{r,n}$ coincides with $\XC{r}(\GLm{n})^{red}$, since any completely reducible representation is of reduced type $[1,1]$. So
we assume $n\geq3$. Let $\rho\in[\rho]\in\XC{r}(\GLm{n})^{red}$
have at least three irreducible blocks; that is, $\rho=\rho_{1}\oplus\rho_{2}\oplus\rho_{3}$
where $\rho_{1}$ and $\rho_{2}$ are irreducible and $\rho_{3}$
is semi-simple.  In other words, $[\rho] \in \XC{r}(\GLm{n})^{red}-U_{r,n}$.

Then $\rho_{2}\oplus\rho_{3}$ is a semi-simple representation into
$\GLm{k}$ for some $k$. Since the irreducible representations $\F_{r}\to\GLm{k}$
are dense (here we use $r>1$), there exists an irreducible sequence
$\sigma_j\in\hm(\F_{r},\GLm{k})$ satisfying \[
\lim_{j\to\infty}\sigma_j=\rho_{2}\oplus\rho_{3}\]
 which in turn implies \[
\lim_{j\to\infty}\rho_{1}\oplus\sigma_j=\rho_{1}\oplus\rho_{2}\oplus\rho_{3}=\rho,\]
where $\rho_{1}\oplus\sigma_j$ is in $U_{r,n}$. Thus we have a sequence $[\rho_1\oplus\sigma_j]\in U_{r,n}\subset\XC{r}(\GLm{n})^{red}$ whose limit is $[\rho_1\oplus\rho_2\oplus\rho_3]$.  This shows that $U_{r,n}$ is dense in $\XC{r}(\GLm{n})^{red}$ and proves the lemma. 
\end{proof}

\begin{cor}
\label{cor:SLm-dense}Let $r,n\geq 2$. Then $W_{r,n}$ is dense in $\XC{r}(\SLm{n})^{red}$
with respect to the ball topology. 
\end{cor}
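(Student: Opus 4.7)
The plan is to deduce this corollary from Lemma \ref{lem:GLm-dense} by imitating its proof inside $\SLm{n}$, with the extra step of correcting determinants so that the approximating sequence remains in $\SLm{n}$. First I would dispose of the trivial case: when $n=2$, every polystable $\SLm{2}$-representation has reduced type $[1,1]$ (two $\GLm{1}$-blocks whose determinants are inverse to each other), so $W_{r,2} = \XC{r}(\SLm{2})^{red}$ tautologically.

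For $n \geq 3$, take $[\rho] \in \XC{r}(\SLm{n})^{red}$ with a polystable representative $\rho = \rho_1 \oplus \cdots \oplus \rho_s$, each $\rho_i : \F_r \to \GLm{n_i}$ irreducible and $s \geq 2$. If $s = 2$ then $[\rho] \in W_{r,n}$ already and there is nothing to prove. If $s \geq 3$, group the last $s-1$ blocks into $\tau := \rho_2 \oplus \cdots \oplus \rho_s : \F_r \to \GLm{k}$ with $k = n - n_1 \geq 2$ (since $s \geq 3$ forces $n_2 + \cdots + n_s \geq 2$). As in the proof of Lemma \ref{lem:GLm-dense}, I would then pick irreducibles $\sigma_j \in \hm(\F_r, \GLm{k})$ with $\sigma_j \to \tau$ on generators, using density of irreducibles in $\hm(\F_r,\GLm{k})$ for $r, k \geq 2$.

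The main obstacle, and the only place this proof adds something genuinely new compared to Lemma \ref{lem:GLm-dense}, is that $\rho_1 \oplus \sigma_j$ typically has determinant $\neq 1$. To fix this, I would twist $\sigma_j$ by a scalar character $\lambda_j : \F_r \to \C^*$ defined on each generator $x_i$ as the $k$-th root of $\det(\tau(x_i))/\det(\sigma_j(x_i))$ nearest to $1$. Since each such ratio tends to $1$, this choice is unambiguous for large $j$ and $\lambda_j \to \mathbf{1}$ on generators. Then $\tilde\sigma_j := \lambda_j\, \sigma_j$ is a sequence of irreducibles in $\GLm{k}$, still converging to $\tau$ on generators, and satisfying $\det(\tilde\sigma_j) = \det(\tau)$ as homomorphisms $\F_r \to \C^*$ (two homomorphisms to an abelian group agreeing on generators agree everywhere). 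Consequently $\rho_1 \oplus \tilde\sigma_j$ has determinant $\det(\rho_1)\det(\tau) = \det(\rho) = 1$, so it lies in $\RC{r}(\SLm{n})$; being of reduced type $[n_1, k]$, its class lies in $W_{r,n}$ and converges to $[\rho]$ in the ball topology, establishing density.
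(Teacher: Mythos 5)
Your proof is correct and uses essentially the same key device as the paper: twist an approximating sequence of irreducibles by a scalar character $\F_r \to \C^*$ that tends to $\mathbf{1}$, so as to restore unit determinant while preserving irreducibility and the limit. The paper applies Lemma~\ref{lem:GLm-dense} as a black box to get a sequence $\sigma_j = \rho_1^{(j)} \oplus \rho_2^{(j)} \in U_{r,n}$ converging to $\rho$ and then rescales the first block by $\alpha_j = (\det\rho_1^{(j)}\det\rho_2^{(j)})^{-1/n_1}$, whereas you re-run the construction keeping $\rho_1$ fixed and rescale the second block; these are interchangeable.
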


\begin{proof}
First we show that $\XC{r}(\SLm{n})^{red}\subset\overline{W_{r,n}}$.
Using the previous lemma and Equation (\ref{eq:reducibles-GL-SL}),
let \begin{eqnarray*}
[\rho]\in\XC{r}(\SLm{n})^{red} & = & \XC{r}(\SLm{n})\cap\XC{r}(\GLm{n})^{red}\\
 & = & \XC{r}(\SLm{n})\cap\overline{U_{r,n}}.\end{eqnarray*}
Then, we can write $\rho=\lim\sigma_{j}$, where $\sigma_{j}=\rho_{1}^{(j)}\oplus\rho_{2}^{(j)}\in U_{r,n}$ is of reduced type $[n_1,n_2]$. Let us write $\lambda_{j}:=\det\rho_{1}^{(j)}\det\rho_{2}^{(j)}$.
Since the limit is a well-defined point $[\rho]\in\XC{r}(\SLm{n})^{red}$,
we can arrange for the sequence to be in $W_{r,n}$ as follows.
Letting
$\alpha_{j}=\left(\frac{1}{\lambda_{j}}\right)^{\frac{1}{n_{1}}}$ (for any choice of branch cut), 
we can also write $\rho=\lim\eta_{j}$ where $\eta_{j}=(\rho_{1}^{(j)}\alpha_{j})\oplus\rho_{2}^{(j)}$ $\in W_{r,n}$,
(since now
$\eta_{j}$ has unit determinant), from which one sees that $\rho\in\overline{W_{r,n}}$,
as wanted. 
Finally, we get:\begin{eqnarray*}
\XC{r}(\SLm{n})^{red} & \subset & \overline{W_{r,n}}=\overline{\XC{r}(\SLm{n})\cap U_{r,n}}\\
& \subset & \XC{r}(\SLm{n})\cap\overline{U_{r,n}}\\
& = & \XC{r}(\SLm{n})^{red},\end{eqnarray*}
which implies all these sets coincide, finishing the proof. Here,
we used the standard fact that the closure of an intersection
is contained in the intersection of the closures, and that $\XC{r}(\SLm{n})$
is closed in $\XC{r}(\GLm{n})$.
\end{proof}

\subsection{Luna Slice and the Zariski Tangent Space}\label{luna}

We now prove a strong lemma, first proved in \cite{HP} and later and in more generality in \cite{Si4}, 
which tells exactly how to understand the Zariski tangent space at a general free group representation.  
For a similar result see also \cite[page 45]{DrJ}.  To that end, we review the Luna Slice theorem \cite{Lu1}.  
We recommend \cite{DrJ} for a good exposition.

Following \cite{Sch3}, we define an \'etale map between complex affine varieties as a local analytic isomorphism in the ball topology.

\begin{thm}[Weak Luna Slice Theorem at Smooth Points]
Let $G$ be a reductive algebraic group acting on an affine variety $X$.  Let $x\in X$ be a smooth point with $\mathsf{Orb}_x$ closed.  Then there exists a  subvariety $x\in V \subset X$, and $\mathsf{Stab}_x$-invariant \'etale morphism $\phi:V\to T_xV$ satisfying:
\begin{enumerate}
\item $V$ is locally closed, affine, smooth, and  $\mathsf{Stab}_x$-stable
\item $V\hookrightarrow X\to X\aq G$ induces $T_{[x]}(V\aq \mathsf{Stab}_x)\cong T_{[x]}(X\aq G)$
\item $\phi(x)=0$ and $d\phi_x=\mathrm{Id}$
\item $T_xX=T_x(\mathsf{Orb}_x)\oplus T_xV$ with respect to the $\mathsf{Stab}_x$-action
\item $\phi$ induces $T_{[x]}(V\aq \mathsf{Stab}_x)\cong T_{0}(T_xV\aq \mathsf{Stab}_x)$.
\end{enumerate}
\end{thm}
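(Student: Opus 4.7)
The plan is to follow Luna's original strategy, adapted to the smooth setting which makes several steps more transparent. First I would observe that since $\mathsf{Orb}_x$ is closed in $X$ and $x$ is smooth, the stabilizer $H := \mathsf{Stab}_x$ is a reductive subgroup of $G$ by Matsushima's criterion (closed orbits of reductive groups on affine varieties have reductive stabilizers). Since $H$ acts linearly on $T_xX$ and preserves the subspace $T_x(\mathsf{Orb}_x)$, reductivity of $H$ produces an $H$-stable complement $N\subset T_xX$ with $T_xX=T_x(\mathsf{Orb}_x)\oplus N$. This gives item (4) at the linear level; the subspace $N$ will serve as the model for the Zariski tangent space $T_xV$.

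Next I would construct $V$ as an $H$-stable, locally closed, affine, smooth subvariety through $x$ with $T_xV=N$. Since $x$ is smooth in $X$, by embedding $X$ equivariantly in a $G$-module (via Sumihiro's theorem or the standard linearization for reductive group actions) one can choose local affine coordinates near $x$ in which the $H$-action is linear. Intersecting $X$ with an $H$-invariant affine subspace transverse to $T_x(\mathsf{Orb}_x)$ and taking the connected component through $x$ yields the required $V$. The étale morphism $\phi:V\to T_xV$ is then obtained by projecting these coordinates onto $N$: by construction $\phi(x)=0$ and $d\phi_x=\mathrm{Id}$, so $\phi$ is a local analytic isomorphism in the ball topology (giving (1), (3), (4)). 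Averaging with respect to $H$ (again by reductivity) makes $\phi$ genuinely $H$-equivariant.

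For the remaining items I would invoke Luna's fundamental lemma: the natural $G$-equivariant morphism $G\times_H V \to X$ sending $[g,v]\mapsto g\cdot v$ is étale at $[e,x]$, because its differential there is the identity under the splitting $T_xX=T_x(\mathsf{Orb}_x)\oplus T_xV$. Passing to GIT quotients (which is compatible with étale base change by reductive groups; here we use that $V\aq H \to X\aq G$ is the induced map), this gives an étale map of quotients sending $[x]$ to $[x]$, which on tangent spaces yields the isomorphism $T_{[x]}(V\aq H)\cong T_{[x]}(X\aq G)$ of (2). Combining this with the étaleness of $\phi$ applied to the quotients yields (5): $T_{[x]}(V\aq H)\cong T_0(T_xV\aq H)$.

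The main obstacle, and indeed the technical heart of the proof, is the étaleness of $G\times_H V \to X$ together with the assertion that étaleness descends to the GIT quotients. The first is Luna's fundamental lemma, which in the smooth setting reduces to the tangent-space decomposition in (4) together with properness of the orbit map on a neighborhood; the second uses the fact that GIT quotients behave well under étale base change by a reductive group, so that étaleness of the total map implies étaleness of the induced map $V\aq H\to X\aq G$ near $[x]$. Everything else is either formal or built into the construction of $V$ and $\phi$.
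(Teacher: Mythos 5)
Your proposal is essentially a sketch of Luna's original proof of his slice theorem, specialized to the smooth case. The paper takes a very different (and much lighter) route: it does not re-derive the slice theorem at all, but simply cites the usual Luna Slice Theorem and, in a remark, explains which part of the standard statement implies each line of the weak version. Specifically, the paper notes that the usual theorem already hands you the locally closed, affine, smooth, $\mathsf{Stab}_x$-stable slice $V$ with the \'etale morphism $\phi:V\to T_xV$, together with the three \'etale maps $\phi: V\aq\mathsf{Stab}_x\to \phi(V)\aq\mathsf{Stab}_x$, $(G\times V)\aq\mathsf{Stab}_x\to U\subset X$, and $V\aq\mathsf{Stab}_x\to U\aq G$; the paper then observes that \'etale maps induce isomorphisms of completions of local rings, hence of Zariski tangent spaces, which yields items (5), (4), and (2).

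Your route is a legitimate alternative, and in fact it mirrors the way one would prove the full Luna theorem: construct the $H$-stable complement $N$ via reductivity of $H=\mathsf{Stab}_x$ (Matsushima), build the slice $V$ inside a linear $G$-embedding of $X$, and then invoke Luna's fundamental lemma that $G\times_H V\to X$ is \'etale near $[e,x]$ and that \'etaleness descends to GIT quotients. This illuminates the mechanics. But it does not buy anything in terms of logical economy: the hard step, which you correctly flag as "the technical heart," is exactly Luna's fundamental lemma plus the descent of \'etaleness to quotients, and you use it as a black box. The paper uses the full Luna theorem as a black box instead, which is strictly shorter. A few of your intermediate steps would also need tightening to be rigorous on their own: "intersecting $X$ with an $H$-invariant affine subspace transverse to $T_x(\mathsf{Orb}_x)$" requires the ambient embedding to be into a $G$-module with $H$ acting linearly (and $N$ should be taken as an $H$-complement inside the ambient module, not just inside $T_xX$, before intersecting with $X$); and the "averaging" to make $\phi$ $H$-equivariant is unnecessary if $\phi$ is chosen as the linear projection in a linearized chart — if one does need averaging, one must check it preserves $d\phi_x=\mathrm{Id}$ and \'etaleness near $x$, which works but deserves a sentence. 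As an exposition of where the slice theorem comes from, your argument is sound; as a proof of the stated corollary, the paper's citation-plus-bookkeeping is both what was intended and more efficient.
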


\begin{rem}
The reader familiar with Luna's Slice Theorem may be wondering how the above stated theorem is implied.  Firstly, note that $\psi$ is an \'etale mapping if and only if the completion of the local rings satisfy $\widehat{\mathcal{O}}_{x}\cong \widehat{\mathcal{O}}_{\psi(x)}$ which implies the subset of derivations are isomorphic, the latter being isomorphic to the Zariski tangent spaces.  The usual Luna Slice Theorem implies $\phi:V\aq \mathsf{Stab}_x\to \phi(V)\aq\mathsf{Stab}_x$ is \'etale, $(G\times V)\aq \mathsf{Stab}_x\cong U\subset X$ is saturated and open, and $V\aq \mathsf{Stab}_x\to U\aq G$ is \'etale.  We thus respectively conclude lines $(5)$, $(4)$, and $(2)$ in the above theorem.
\end{rem}

\begin{lem}\label{tangentspace}  Let $G$ be a complex algebraic reductive Lie group.
For any $[\rho]\in \XC{r}(G)$, $$T_{[\rho]}\XC{r}(G)\cong T_0\left(H^1(\F_r;\mathfrak{g}_{\mathrm{Ad}_{\rho^{ss}}})\aq \mathsf{Stab}_{\rho^{ss}}\right),$$ where $\rho^{ss}$ is a poly-stable representative from the extended orbit $[\rho]$.
\end{lem}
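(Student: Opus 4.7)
The plan is to apply the weak Luna Slice Theorem to the $G$-action on $\RC{r}(G)$ at a polystable representative $\rho^{ss}\in[\rho]$, and then identify the tangent space to the slice with the cohomology $H^1(\F_r;\mathfrak{g}_{\mathrm{Ad}_{\rho^{ss}}})$ using Lemmas \ref{cocycle} and \ref{coboundary}.

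First, observe that the hypotheses of the slice theorem are satisfied at $\rho^{ss}$: the variety $\RC{r}(G)\cong G^r$ is smooth everywhere, and by definition of polystable the orbit $\mathsf{Orb}_{\rho^{ss}}$ is closed in $\RC{r}(G)$. So the theorem produces a $\mathsf{Stab}_{\rho^{ss}}$-stable slice $V\subset \RC{r}(G)$ through $\rho^{ss}$ together with an étale, $\mathsf{Stab}_{\rho^{ss}}$-equivariant map $\phi\colon V\to T_{\rho^{ss}}V$ with $\phi(\rho^{ss})=0$ and $d\phi_{\rho^{ss}}=\mathrm{Id}$.

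Next, I chain together the isomorphisms supplied by parts (2) and (5) of the theorem:
\[
T_{[\rho]}\XC{r}(G)=T_{[\rho^{ss}]}\bigl(\RC{r}(G)\aq G\bigr)\cong T_{[\rho^{ss}]}\bigl(V\aq \mathsf{Stab}_{\rho^{ss}}\bigr)\cong T_0\bigl(T_{\rho^{ss}}V\aq \mathsf{Stab}_{\rho^{ss}}\bigr).
\]
It then remains only to identify $T_{\rho^{ss}}V$ with $H^1(\F_r;\mathfrak{g}_{\mathrm{Ad}_{\rho^{ss}}})$ as a $\mathsf{Stab}_{\rho^{ss}}$-module. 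Part (4) gives the $\mathsf{Stab}_{\rho^{ss}}$-equivariant splitting $T_{\rho^{ss}}\RC{r}(G)=T_{\rho^{ss}}(\mathsf{Orb}_{\rho^{ss}})\oplus T_{\rho^{ss}}V$. Lemma \ref{cocycle} identifies the left-hand side with $Z^1(\F_r;\mathfrak{g}_{\mathrm{Ad}_{\rho^{ss}}})$, and Lemma \ref{coboundary} identifies the orbit summand with $B^1(\F_r;\mathfrak{g}_{\mathrm{Ad}_{\rho^{ss}}})$, so the complementary summand $T_{\rho^{ss}}V$ is $\mathsf{Stab}_{\rho^{ss}}$-equivariantly isomorphic to the quotient $H^1(\F_r;\mathfrak{g}_{\mathrm{Ad}_{\rho^{ss}}})$.

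Substituting this identification into the chain above yields the desired isomorphism. The main potential obstacle is ensuring that the identifications of $Z^1$ with $T_{\rho^{ss}}\RC{r}(G)$ and of $B^1$ with $T_{\rho^{ss}}(\mathsf{Orb}_{\rho^{ss}})$ are actually $\mathsf{Stab}_{\rho^{ss}}$-equivariant, so that we may take $\mathsf{Stab}_{\rho^{ss}}$-quotients coherently; this follows because both isomorphisms come from the natural adjoint action of $G$ on $\mathfrak{g}^r$, which restricts to the $\mathsf{Stab}_{\rho^{ss}}$-action by naturality of the constructions in Lemmas \ref{cocycle} and \ref{coboundary}.
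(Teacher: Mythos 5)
Your proof is correct and follows essentially the same route as the paper's: apply the Weak Luna Slice Theorem at the polystable representative $\rho^{ss}$, use parts (2), (4), and (5), and identify $T_{\rho^{ss}}V$ with $H^1(\F_r;\mathfrak{g}_{\mathrm{Ad}_{\rho^{ss}}})$ as a $\mathsf{Stab}_{\rho^{ss}}$-module via Lemmas \ref{cocycle} and \ref{coboundary}. Your closing remark on the $\mathsf{Stab}_{\rho^{ss}}$-equivariance of the cocycle/coboundary identifications is a worthwhile extra sentence of justification that the paper leaves implicit.
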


\begin{proof}
Any $\rho^{ss}\in [\rho]$ has a closed orbit and is a smooth point of $\RC{r}(G)$, and every point $[\rho]\in \XC{r}(G)$ contains such a $\rho^{ss}$.

By the Luna Slice Theorem, there exists an algebraic set $\rho^{ss}\in V_{\rho^{ss}}\subset \RC{r}(G)$ such that:
\begin{enumerate}
\item $\mathsf{Stab}_{\rho^{ss}}\left(V_{\rho^{ss}}\right)\subset V_{\rho^{ss}}$ 

\item With respect to the reductive action of $\mathsf{Stab}_{\rho^{ss}}$, \begin{align*}Z^1(\F_r;\mathfrak{g}_{\mathrm{Ad}_{\rho^{ss}}})\cong T_{\rho^{ss}}(\RC{r}(G)) &\cong T_{\rho^{ss}}(\mathsf{Orb}_{\rho^{ss}})\oplus T_{\rho^{ss}}(V_{\rho^{ss}})\\ &\cong B^1(\F_r;\mathfrak{g}_{\mathrm{Ad}_{\rho^{ss}}})\oplus T_{\rho^{ss}}(V_{\rho^{ss}}),\end{align*} since $\rho^{ss}$ is smooth.

\item Thus, $H^1(\F_r;\mathfrak{g}_{\mathrm{Ad}_{\rho^{ss}}})\cong T_{\rho^{ss}}(V_{\rho^{ss}}),$ as $\mathsf{Stab}_{\rho^{ss}}$-spaces.

\item $V_{\rho^{ss}}\hookrightarrow \RC{r}(G)\to \XC{r}(G)$ induces $T_{[\rho^{ss}]}(V_{\rho^{ss}}\aq \mathsf{Stab}_{\rho^{ss}})\cong T_{[\rho]}\XC{r}(G)$.

\item  $T_{[\rho^{ss}]}(V_{\rho^{ss}}\aq \mathsf{Stab}_{\rho^{ss}})\cong T_{0}\left(T_{\rho^{ss}}(V_{\rho^{ss}})\aq \mathsf{Stab}_{\rho^{ss}}\right),$ since $\rho^{ss}$ is smooth.

\end{enumerate}
Putting these steps together we conclude $$T_{[\rho]}\XC{r}(G)\cong T_{0}\left(T_{\rho^{ss}}(V_{\rho^{ss}})\aq \mathsf{Stab}_{\rho^{ss}}\right)\cong T_{0}\left(H^1(\F_r;\mathfrak{g}_{\mathrm{Ad}_{\rho^{ss}}})\aq \mathsf{Stab}_{\rho^{ss}}\right). $$
\end{proof}

\begin{rem}\label{etalenbhd}
Upon closer examination we find $H^1(\F_r;\mathfrak{g}_{\mathrm{Ad}_{\rho^{ss}}})\aq 
\mathsf{Stab}_{\rho^{ss}}$ to be an \'etale neighborhood; that is, an algebraic set 
that maps, via an \'etale mapping, to an open set $($in the ball topology$)$ of $\XC{r}(G)$ 
$($see page $223$ in \cite{Sch3}$)$.
\end{rem}

\subsection{The $\mathbb{C}^{*}$-action on cohomology}\label{cohomaction}

As we saw in Corollary \ref{cor:SLm-dense}, the generic singularity
will occur when $\mathsf{Stab}_{\rho}$ is the smallest possible torus group,
namely $\mathbb{C}^{*}$ or $\mathbb{C}^{*}\times\mathbb{C}^{*}$,
for the cases $G=\SLm{n}$ or $G=\GLm{n}$, respectively. 

To study the $\mathbb{C}^{*}$-action on cohomology, the following
setup will be relevant. 

Fix two integers $n,k\geq1$. Consider the vector space $\mathbb{C}^{2n}=\mathbb{C}^{n}\times\mathbb{C}^{n}$
with variables $(\mathbf{z},\mathbf{w})=(z_{1},...,z_{n},w_{1},...,w_{n})$
and the action of $\mathbb{C}^{*}$ given by \begin{equation}
\lambda\cdot(\mathbf{z},\mathbf{w})=(\lambda^{k}\mathbf{z},\lambda^{-k}\mathbf{w}).\label{eq:action}\end{equation}
Let us denote by $\mathbb{C}^{2n}\aq_{k}\mathbb{C}^{*}$ the corresponding
affine GIT quotient. It is the spectrum of the ring $\mathbb{C}\left[\mathbf{z},\mathbf{w}\right]^{\mathbb{C}^{*}}$of
polynomial invariants under this action. To describe this ring, let
\[
p(\mathbf{z},\mathbf{w})=z_{1}^{a_{1}}\cdots z_{n}^{a_{n}}w_{1}^{b_{1}}\cdots w_{n}^{b_{n}}\]
be a monomial, with $a_{i},b_{i}\in\mathbb{N}$, and define \[
\partial p:=\sum_{j=1}^{n}a_{j}-b_{j}.\]
Any polynomial invariant under the action is a sum of monomials $p$
such that $\partial p=0$. Considering the monomials with smallest
degree, we are led to conclude that \[
\mathbb{C}\left[\mathbf{z},\mathbf{w}\right]^{\mathbb{C}^{*}}=\mathbb{C}\left[z_{1}w_{1},...,z_{1}w_{n},...,z_{n}w_{1},...,z_{n}w_{n}\right].\]
Note that this shows that the quotient is independent of $k$. By
viewing these $n^{2}$ generators as elements of a $n\times n$ matrix,
$X=(x_{ij})$ , $x_{ij}=z_{i}w_{j}$ which necessarily has 
rank at most one, we conclude that this is the ring of polynomial
functions on the variety $V\subset M_{n\times n}\left(\mathbb{C}\right)$
of matrices of rank $\leq1$:\[
\mathbb{C}\left[\mathbf{z},\mathbf{w}\right]^{\mathbb{C}^{*}}=\mathbb{C}\left[V\right].\]
The variety $V$ is called a determinantal variety (\cite{Harris}) and one
can show that $\mathbb{C}\left[V\right]=\mathbb{C}\left[x_{ij}\right]/I$
where $I$ is the ideal of $2\times2$ minors of $X$. By simple computations,
$V$ has a unique singularity, the zero matrix, which corresponds
to the orbit of zero in $\mathbb{C}^{2n}$. 

Now, observe that all orbits of the action (\ref{eq:action}) are closed
except the orbits contained in\[
Z:=\left\{ 0\right\} \times\mathbb{C}^{n}\cup\mathbb{C}^{n}\times\left\{ 0\right\} ,\]
and moreover there is only one closed orbit in $Z$, which is easily
seen to be the only singular point of $\mathbb{C}^{2n}\aq_{k}\mathbb{C}^{*}$.
Therefore, by GIT, the quotient \[
(\mathbb{C}^{2n}\setminus Z)/\mathbb{C}^{*}\]
is a geometric quotient. We summarize these results in the following lemma.

\begin{lem}\label{complexcone}Let $n\geq 2$.  Then:
\begin{enumerate}
\item[(a)] $\mathbb{C}^{2n}\aq_{k}\mathbb{C}^*$ is isomorphic to the determinantal
variety of $n\times n$ square matrices of rank $\leq1$. Its unique
singularity is the orbit of the origin.

\item[(b)] $(\mathbb{C}^{2n}\setminus Z)/\mathbb{C}^{*}$ is isomorphic to
$\mathbb{C}^{*}\times\cp^{n-1}\times\cp^{n-1}$.
\end{enumerate}
\end{lem}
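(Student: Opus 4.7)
For part (a), I would make rigorous the invariant-ring computation outlined just before the lemma. The condition $\partial p = 0$ shows every invariant monomial lies in the subalgebra generated by the quadratic invariants $x_{ij}:=z_iw_j$, yielding a surjection $\C[x_{ij}]\twoheadrightarrow \C[\mathbf{z},\mathbf{w}]^{\C^*}$. Its kernel contains the ideal $I_2$ of $2\times 2$ minors of $X=(x_{ij})$, since $(z_iw_j)(z_kw_l)=(z_iw_l)(z_kw_j)$, and that $I_2$ is the whole kernel is the classical presentation of the coordinate ring of the rank-$\leq 1$ determinantal variety $V\subset M_{n\times n}(\C)$ (cf.\ \cite{Harris}).

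For the singular locus of $V$, the plan is a dimension count combined with the cone structure. One has $\dim V = 2n-1$, since $V$ is the image of the dominant morphism $(\mathbf{u},\mathbf{v})\mapsto\mathbf{u}\mathbf{v}^T$ with generic $\C^*$-orbit fibers. Because the defining equations are quadrics with no linear part, the Zariski tangent space at the origin equals all of $\C^{n^2}$, which strictly exceeds $2n-1$ for $n\geq 2$; hence $0\in V^{\mathrm{sing}}$. Away from the origin, $V\setminus\{0\}$ is the punctured affine cone over the Segre embedding $\cp^{n-1}\times\cp^{n-1}\hookrightarrow\cp^{n^2-1}$, and being a $\C^*$-bundle over the smooth Segre variety, it is itself smooth, so $V^{\mathrm{sing}}=\{0\}$.

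For part (b), I would first realize the quotient as a geometric one. On $\C^{2n}\setminus Z$ each orbit of the $\C^*$-action is closed (the action is $1$-parameter and avoids the limit orbit through the origin) and has stabilizer the cyclic group $\mathbb{Z}/k$ of $k$-th roots of unity, so GIT forces the affine quotient and the orbit-space quotient to agree; via $(\mathbf{z},\mathbf{w})\mapsto\mathbf{z}\mathbf{w}^T$ this identifies $(\C^{2n}\setminus Z)/\C^*$ with $V\setminus\{0\}$, the complement of the unique singular point from (a). For the product-type description, I would use the pair of $\C^*$-invariant projective projections $(\mathbf{z},\mathbf{w})\mapsto([\mathbf{z}],[\mathbf{w}])$ onto $\cp^{n-1}\times\cp^{n-1}$, whose fibers are the $\C^*$-orbits of fixed rank-one matrices $\mathbf{u}\mathbf{v}^T$; on each affine chart $\{z_iw_j\neq 0\}$ the remaining invariant $c=x_{ij}\in\C^*$ provides the fiber coordinate, giving a local trivialization of the form $\C^*\times U\times U'$ with $U,U'\subset\cp^{n-1}$ affine.

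The delicate step in (b) is the global identification, since the $\C^*$-bundle $V\setminus\{0\}\to\cp^{n-1}\times\cp^{n-1}$ is the complement of the zero section in the total space of $\mathcal{O}(-1)\boxtimes\mathcal{O}(-1)$, and thus is not literally an algebraic direct product. I would present the stated isomorphism as the fibered description above, assembled from the explicit local trivializations on the charts $\{z_iw_j\neq 0\}$ — which is the form in which the lemma is actually used in Section \ref{manifoldsection} to supply local models for neighborhoods of singularities of $\XC{r}(G)$.
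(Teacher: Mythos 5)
Your treatment of (a) matches the paper's intended (but unwritten) argument: the paper simply appeals to "simple computations," and your presentation via the invariant ring $\C[x_{ij}]/I_2$, the dimension count $\dim V = 2n-1$, and the observation that the quadratic relations force $T_0V=\C^{n^2}$ while the punctured cone is smooth, is exactly the standard way to fill that in.

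For (b), however, you have correctly identified a genuine \emph{error} in the lemma as printed, and you should state it more forcefully rather than try to rescue the claim via local trivializations. For $n\ge 2$, the space $(\C^{2n}\setminus Z)/\C^{*}$ is not merely different from $\C^{*}\times\cp^{n-1}\times\cp^{n-1}$ as an algebraic variety --- it is not even homotopy equivalent to it. As you observe, the quotient is the complement of the zero section in $\mathcal{O}(-1)\boxtimes\mathcal{O}(-1)$ over $\cp^{n-1}\times\cp^{n-1}$; the Gysin sequence of this $\C^{*}$-bundle gives $H^{2}\cong\mathbb{Z}^{2}/\mathbb{Z}\cdot(1,1)\cong\mathbb{Z}$, whereas K\"unneth gives $H^{2}(\C^{*}\times\cp^{n-1}\times\cp^{n-1})\cong\mathbb{Z}^{2}$. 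Most concretely, for $n=2$ the punctured quotient is the punctured conifold $\{xw=yz\}\setminus\{0\}\subset\C^{4}$, which retracts onto its simply connected link $S^{2}\times S^{3}$, not onto $S^{1}\times S^{2}\times S^{2}$. So part (b) should read: $(\C^{2n}\setminus Z)/\C^{*}$ is a \emph{nontrivial} $\C^{*}$-bundle over $\cp^{n-1}\times\cp^{n-1}$, namely the punctured affine cone over the Segre variety. Fortunately, as you also note, the paper only uses (b) to justify the notation $\mathcal{C}_\C(\cp^{n-1}\times\cp^{n-1})$ and, in Section \ref{manifoldsection}, to rule out Euclidean neighborhoods via Lemma \ref{lem:euclidian-neigh}; that argument needs only that the link of the cone point is not a homotopy sphere, which is true for the nontrivial circle bundle just as it would be for the product, so the downstream conclusions survive the correction.
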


Because of the fact that the GIT quotient is obtained from $(\mathbb{C}^{2n}\setminus Z)/\mathbb{C}^{*}$
by adding just one point, the singular point, and because of (b) above,
we will refer to $\mathbb{C}^{2n}\aq_{k}\C^*$ as an affine
cone over $\cp^{n-1}\times\cp^{n-1}$, and denote it by $\mathcal{C}_\C(\cp^{n-1}\times\cp^{n-1})$.
It is called the affine cone over the Segre variety in \cite{Muk}.

Now consider the following antiholomorphic involution of $\mathbb{C}^{2n}=\mathbb{C}^{n}\times\mathbb{C}^{n}$:\[
j:(\mathbf{z},\mathbf{w})\mapsto-(\bar{\mathbf{w}},\bar{\mathbf{z}}),\]
and consider the same action as above, but restrict it to $S^{1}\subset\mathbb{C}^{*}$.
This will be relevant in the study of the compact quotients. The fixed
point set of the involution $j$ is the set \[
F:=\left\{ (\mathbf{z},-\bar{\mathbf{z}}):\mathbf{z}\in\mathbb{C}\right\} \subset\mathbb{C}^{n}\times\mathbb{C}^{n},\]
which is canonically identified with the first copy of $\mathbb{C}^{n}$
(as real vector spaces).

\begin{lem}\label{lem:S1-action}
\begin{enumerate}
\item[]
\item[(a)] The $S^{1}$-action on $\mathbb{C}^{2n}$ commutes with $j$.
\item[(b)] The quotient $F/S^{1}$ of its restriction to $F$ is homeomorphic
to a real open cone over $\cp^{n-1}$ denoted by $\mathcal{C}(\cp^{n-1})$.
\end{enumerate}
\end{lem}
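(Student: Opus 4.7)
The plan is to verify (a) by a short direct calculation and then prove (b) by identifying the restricted action with the standard scalar $S^{1}$-action on $\C^{n}$ and recognizing the resulting quotient via polar coordinates.

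For part (a), the crucial observation is that $\bar{\lambda} = \lambda^{-1}$ for $\lambda \in S^{1}$, so $\overline{\lambda^{\pm k}} = \lambda^{\mp k}$. Applying $j$ to $\lambda \cdot (\mathbf{z},\mathbf{w}) = (\lambda^{k}\mathbf{z}, \lambda^{-k}\mathbf{w})$ and simplifying the resulting conjugates yields $-(\lambda^{k}\bar{\mathbf{w}}, \lambda^{-k}\bar{\mathbf{z}})$, which coincides with $\lambda \cdot j(\mathbf{z},\mathbf{w}) = \lambda \cdot (-\bar{\mathbf{w}}, -\bar{\mathbf{z}})$. This establishes (a).

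For part (b), (a) shows that the $S^{1}$-action preserves the fixed set $F$ of $j$. Under the canonical real-linear identification $F \cong \C^{n}$ sending $(\mathbf{z}, -\bar{\mathbf{z}}) \mapsto \mathbf{z}$, the restricted action becomes the weighted scalar action $\lambda \cdot \mathbf{z} = \lambda^{k}\mathbf{z}$. Since $\lambda \mapsto \lambda^{k}$ is a surjective group homomorphism $S^{1} \to S^{1}$, the $S^{1}$-orbits on $F$ agree with those of the standard scalar $S^{1}$-action on $\C^{n}$, so it suffices to describe the quotient of the latter. Polar decomposition gives an $S^{1}$-equivariant homeomorphism $\C^{n} \setminus \{0\} \cong (0,\infty) \times S^{2n-1}$, with $S^{1}$ acting trivially on $(0,\infty)$ and by the Hopf action on $S^{2n-1}$; equivariance holds because scalar multiplication by $S^{1}$ preserves the Euclidean norm. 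Taking the quotient and using the standard identification $S^{2n-1}/S^{1} \cong \cp^{n-1}$ yields $(0,\infty) \times \cp^{n-1}$, and reattaching the origin as cone point recovers the open real cone $\mathcal{C}(\cp^{n-1})$.

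There is no serious obstacle in this argument: everything reduces to elementary checks of equivariance together with the standard Hopf quotient. The only point worth articulating carefully is that the surjectivity of $\lambda \mapsto \lambda^{k}$ is what allows us to collapse the weighted and standard actions to the same orbit space, so that the cone description is genuinely independent of the weight $k$.
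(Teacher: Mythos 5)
Your proof takes essentially the same route as the paper: identify $F$ with $\C^n$ via $(\mathbf{z},-\bar{\mathbf{z}})\mapsto\mathbf{z}$, recognize the restricted $S^1$-action as scalar multiplication, decompose $\C^n\setminus\{0\}$ via polar coordinates, and invoke the Hopf quotient $S^{2n-1}/S^1\cong\cp^{n-1}$ to obtain the cone. One small improvement worth noting: the paper's proof writes $\lambda\cdot\mathbf{z}=\lambda\mathbf{z}$ and asserts the action is free away from the origin, which is literally correct only for $k=1$ (for $k>1$ every nonzero point has stabilizer $\mathbb{Z}_k$); your observation that $\lambda\mapsto\lambda^k$ is a surjection of $S^1$ onto itself, so the weighted and standard actions have the same orbits and hence the same quotient, is the cleaner way to see that the conclusion holds uniformly in $k\geq 1$.
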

\begin{proof}
Proving (a) is straightforward, and we leave it to the reader.

To prove (b) first observe that on the fixed point set, the $S^{1}$-action just gives\[
\lambda\cdot(\mathbf{z},-\bar{\mathbf{z}})=(\lambda\mathbf{z},-\bar{\lambda}\bar{\mathbf{z}}),\quad\lambda\in S^{1}\]
so we can describe it as an action of $S^{1}$ on the first copy of
$\mathbb{C}^{n}$. Since the action is free except for the origin,
all orbits are circles and the quotient $\mathbb{C}^{n}/S^{1}$ is
the union of $\mathbb{C}^{n}\setminus\left\{ 0\right\} /S^{1}$ with
a single point. Since $\mathbb{C}^{n}\setminus\left\{ 0\right\} /S^{1}$
is homeomorphic to $\left(S^{2n-1}/S^{1}\right)\times\mathbb{R}$,
we obtain that $F/S^{1}$ is the real cone over $S^{2n-1}/S^{1}$,
the latter being well known to be $\cp^{n-1}$.
\end{proof}
These singularity types will be encountered in $\SLm{n}$ and $\SUm{n}$-
character varieties. In fact, the same singularities will also appear
in $\GLm{n}$ and $\Um{n}$-character varieties, because the actions
in these cases are very similar.

Indeed one can easily show the following

\begin{prop}Let $n\geq 2$.
\label{pro:Singular-GLcase}Let
$T=\mathbb{C}^{*}\times\mathbb{C}^{*}$ act on a vector space $V=\mathbb{C}^{2n}=\mathbb{C}^{n}\times\mathbb{C}^{n}$
as follows:\[
(\lambda,\mu)\cdot(\mathbf{z},\mathbf{w})=(\lambda\mu^{-1}\mathbf{z},\mu\lambda^{-1}\mathbf{w}).\]
Then, $\mathbb{C}^{2n}\aq T$ is isomorphic to $\mathbb{C}^{2n}\aq_{2}\mathbb{C}^{*}$. In particular,
as before, this quotient is the determinantal variety of $n\times n$
square matrices of rank $\leq1$, which has dimension $2n-1$.  Its unique singularity is the
orbit of the origin. 
\end{prop}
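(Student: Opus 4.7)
The plan is to observe that the $T=\C^*\times\C^*$-action factors through a single $\C^*$-action already analysed in Lemma \ref{complexcone}. I would first check that the diagonal subgroup $\Delta=\{(\lambda,\lambda):\lambda\in\C^*\}\subset T$ lies in the kernel of the action: directly, $(\lambda,\lambda)\cdot(\mathbf{z},\mathbf{w})=(\mathbf{z},\mathbf{w})$ because $\lambda\mu^{-1}=1$ when $\lambda=\mu$. The character $\chi:T\to\C^*$, $(\lambda,\mu)\mapsto\lambda\mu^{-1}$, has kernel exactly $\Delta$ and identifies $T/\Delta\cong\C^*$. Under this identification, the residual action is $\nu\cdot(\mathbf{z},\mathbf{w})=(\nu\mathbf{z},\nu^{-1}\mathbf{w})$, which is precisely the action (\ref{eq:action}) with $k=1$.

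Because $\Delta$ acts trivially, the $T$-orbits on $\C^{2n}$ coincide with the orbits of the induced $\C^*$-action, and hence the invariant rings coincide: $\C[\mathbf{z},\mathbf{w}]^{T}=\C[\mathbf{z},\mathbf{w}]^{\C^*}$. By Lemma \ref{complexcone}(a), together with the observation in its proof that the invariant ring is independent of the weight $k$, this invariant ring is exactly the coordinate ring of the determinantal variety $V\subset M_{n\times n}(\C)$ of matrices of rank at most one, whose unique singularity is at the origin. Thus $\C^{2n}\aq T\cong\C^{2n}\aq_{1}\C^{*}\cong\C^{2n}\aq_{2}\C^{*}\cong V$, which is the statement of the proposition. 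Alternatively, one can redo the monomial bookkeeping directly: the weight of $z_{1}^{a_{1}}\cdots z_{n}^{a_{n}}w_{1}^{b_{1}}\cdots w_{n}^{b_{n}}$ under $T$ is $(|a|-|b|,|b|-|a|)$, so invariance is equivalent to $|a|=|b|$, the same condition as in the single $\C^{*}$ case.

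For the dimension count, I would note that the rank $1$ locus of $V$ consists of matrices $\mathbf{v}\mathbf{u}^{\top}$ with $\mathbf{v},\mathbf{u}\in\C^{n}\setminus\{0\}$, parametrised up to a common $\C^{*}$-rescaling, giving dimension $2n-1$; its closure only adds the origin, so $\dim V=2n-1$. No serious obstacle arises in this proof: the only substantive step is spotting that the diagonal $\Delta$ acts trivially, after which the proposition reduces to invariant-theoretic content already established in Lemma \ref{complexcone} for the one-parameter case.
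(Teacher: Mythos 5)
Your proof is correct. The paper's own argument is a one-sentence direct computation: it checks that the $T$-invariant monomials are again exactly the $z_jw_k$, hence the invariant ring is the same as in Lemma \ref{complexcone}. Your primary argument packages the same observation more conceptually: you note that the diagonal $\Delta=\{(\lambda,\lambda)\}$ is precisely the kernel of the action, identify $T/\Delta\cong\C^*$ via the character $(\lambda,\mu)\mapsto\lambda\mu^{-1}$, and observe that the residual $\C^*$-action is the weight-$1$ action of Equation (\ref{eq:action}), so $\C[\mathbf{z},\mathbf{w}]^T=\C[\mathbf{z},\mathbf{w}]^{\C^*}$; then Lemma \ref{complexcone}(a) and the independence of $k$ finish the job. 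This factorization-through-the-quotient-torus route makes transparent \emph{why} the invariants come out the same, whereas the paper simply records the result of the monomial bookkeeping (which you also supply as your alternative, and it matches). Both reach the same place by essentially the same mathematics; your version is the slightly more structural presentation. Your dimension count for the determinantal variety ($\mathbf{v}\mathbf{u}^\top$ up to a common $\C^*$-rescaling plus the origin) is also fine.
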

\begin{proof}
We just need to argue, as before, that the invariant polynomials are
generated by the same monomials, those of the form $z_{j}w_{k}$, for
any indices $j,k\in\left\{ 1,...,n\right\} $, so they form an $n\times n$
matrix with rank one. 
\end{proof}

Finally, note that for $n=1$, we get a smooth variety: $\mathbb{C}^{2}\aq_{2}\mathbb{C}^{*}\cong\mathbb{C}$.

\subsection{Proof of Theorem \ref{theorem1} Case 1:  $\GLm{n}$ or $\SLm{n}$}

\begin{thm}\label{case1}Let $r,n\geq 2$ and $G=\GLm{n}$ or $\SLm{n}$.  Then $\XC{r}(G)^{sing}=\XC{r}(G)^{red}$ if and only if $(r,n)\not=(2,2)$.
\end{thm}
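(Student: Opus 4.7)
The plan is to combine the adherence machinery of Lemma \ref{adherence:1} with the Luna-slice description of tangent spaces from Lemma \ref{tangentspace}. The ``only if'' direction will be immediate: when $(r,n)=(2,2)$, the isomorphism $\XC{2}(\SLm{2})\cong\C^3$ makes $\XC{2}(\SLm{2})^{sing}$ empty while $\XC{2}(\SLm{2})^{red}$ is non-empty, and Corollary \ref{singularequivalence} transfers this obstruction to $\GLm{2}$. For the converse, Lemma \ref{smoothirrep} already supplies $\XC{r}(G)^{sing}\subset\XC{r}(G)^{red}$, and Corollary \ref{singularequivalence} lets us focus on $G=\SLm{n}$. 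Since Corollary \ref{cor:SLm-dense} shows that $W_{r,n}$ is dense in $\XC{r}(\SLm{n})^{red}$ in the ball topology, Lemma \ref{adherence:1} reduces everything to showing that, for $(r,n)\ne(2,2)$, every $[\rho]\in W_{r,n}$ is an algebraic singularity.

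To do this, I would fix $[\rho]=[\rho_1\oplus\rho_2]$ with $\rho_i:\F_r\to\GLm{n_i}$ irreducible and $\mathsf{Stab}_\rho\cong\C^*$, and use Lemma \ref{tangentspace} to write
$$T_{[\rho]}\XC{r}(\SLm{n})\cong T_0\bigl(H^1(\F_r;(\slm{n})_{\Ad_\rho})\aq \mathsf{Stab}_\rho\bigr).$$
Then I would decompose $\slm{n}$ as a $\C^*$-module along the block structure: the diagonal part is fixed, while the two off-diagonal blocks $\hm(\C^{n_2},\C^{n_1})$ and $\hm(\C^{n_1},\C^{n_2})$ carry non-trivial, opposite weights. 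Using the equivariant embedding $B^1\cong\slm{n}/H^0\hookrightarrow Z^1=\slm{n}^r$, together with the fact that $H^0$ sits entirely in the trivial weight space, a weight-by-weight dimension count identifies $H^1$ as a $\C^*$-module with a trivial summand $\C^D$ and two dually-paired non-trivial summands of common dimension $N=(r-1)n_1n_2$. Since $\C^*$-invariants respect this splitting, Lemma \ref{complexcone} identifies $H^1\aq\C^*$ with $\C^D\times \mathcal{C}_\C(\cp^{N-1}\times\cp^{N-1})$, the product of an affine space with the determinantal variety of $N\times N$ matrices of rank $\le 1$.

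Finally, I would note that the determinantal cone is cut out by $2\times 2$ minors, all vanishing to order two at the origin, so the Zariski tangent space at $0$ is the ambient $\C^{N^2}$ while the Krull dimension is only $2N-1$; the tangent-space excess is exactly $(N-1)^2$. Since $N=(r-1)n_1n_2$ with $n_1,n_2\ge 1$ and $n_1+n_2=n$, the equality $N=1$ holds precisely when $r=2$ and $n_1=n_2=1$, i.e.\ $(r,n)=(2,2)$; in every other case $N\ge 2$, the excess is positive, and $[\rho]$ is singular. The hard part, I expect, will be the second step: carefully extracting the $\mathsf{Stab}_\rho$-module structure on $H^1$ and verifying that the trivial and non-trivial summands cleanly decouple in the GIT quotient. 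Once that bookkeeping is done, Lemma \ref{complexcone} delivers the singularity analysis almost immediately.
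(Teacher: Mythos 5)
Your proposal follows essentially the same route as the paper: one inclusion from Lemma \ref{smoothirrep}, density of minimal-stabilizer reducibles plus Lemma \ref{adherence:1} to reduce to a pointwise claim, the Luna slice (Lemma \ref{tangentspace}) to express the tangent space as $T_0(H^1\aq\mathsf{Stab}_\rho)$, the weight decomposition of $H^1$, and the determinantal-cone singularity (Lemma \ref{complexcone} / Proposition \ref{pro:Singular-GLcase}) to conclude. Your explicit excess count $(N-1)^2$ is a nice touch; the paper only asserts that the origin is the unique singularity of the cone when $\dim_\C W>2$. The weight bookkeeping you flag as ``the hard part'' is exactly the computation the paper carries out, and your sketch of it (via the equivariant embedding $B^1\cong\slm{n}/H^0\hookrightarrow Z^1$, with $H^0$ in weight zero) is correct.

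The one place to be careful is the direction of the $\GLm{n}\leftrightarrow\SLm{n}$ transfer. The paper runs the cocycle computation for $G=\GLm{n}$ (with stabilizer $\C^*\times\C^*$ and Proposition \ref{pro:Singular-GLcase}) and then deduces the $\SLm{n}$ case from Corollary \ref{singularequivalence}; that transfer is immediate because $\XC{r}(\SLm{n})^{red}\subset\XC{r}(\GLm{n})^{red}$ and the corollary applies to every point of $\XC{r}(\SLm{n})$. You propose the reverse: do $\SLm{n}$ first, then ``Corollary \ref{singularequivalence} lets us focus on $\SLm{n}$.'' But Corollary \ref{singularequivalence} only compares the two singular loci at points of $\XC{r}(\SLm{n})\subset\XC{r}(\GLm{n})$, and $\XC{r}(\GLm{n})^{red}$ contains points that are not in $\XC{r}(\SLm{n})$. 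To make your reduction go through you need an extra step: either invoke the \'etale equivalence $\XC{r}(\GLm{n})\sim\XC{r}(\SLm{n})\times(\C^*)^r$ from Corollary \ref{etalequiv} (so that the singular and reducible loci of $\XC{r}(\GLm{n})$ are exactly the images of $\XC{r}(\SLm{n})^{sing}\times(\C^*)^r$ and $\XC{r}(\SLm{n})^{red}\times(\C^*)^r$), or observe that the free $(\C^*)^r$-scaling action on $\XC{r}(\GLm{n})$ acts by variety automorphisms, preserves reducibility, and that every orbit meets $\XC{r}(\SLm{n})$. With that supplement your argument is complete and matches the paper's in all essential respects.
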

\begin{rem}
If $n=1$ the statement is vacuously true since in these cases there are no reducibles, nor are there singularities.  We have already seen that there are smooth reducibles in the cases $r=1, n\geq 2$, and $(r,n)=(2,2)$ since there always exist reducibles in these cases and the entire moduli spaces are smooth.  
\end{rem}
\begin{proof}
Let $G=\GLm{n}$.  By Lemma \ref{smoothirrep} it is enough to show $\XC{r}(G)^{red}\subset\XC{r}(G)^{sing}$.  

Let $\rho\in U_{r,n}\subset \RC{r}(G)^{red}$ be of reduced type $[n_1,n_2]$ with $n_1,n_2>0$ and $n=n_1+n_2$ (see Definition \ref{def:minstab}) and 
write it in the form $\rho=\rho_{1}\oplus\rho_{2}=\left(\begin{array}{cc}
\vec{X} & \vec{0}_{n_{1}\times n_{2}}\\
\vec{0}_{n_{2}\times n_{1}} & \vec{Y}\end{array}\right)$, where $\vec{X}=(X_{1},...,X_{r})\in M_{n_{1}\times n_{1}}^{r}$
and $\vec{Y}=(Y_{1},...,Y_{r})\in M_{n_{2}\times n_{2}}^{r}$ and
$\vec{0}_{k\times l}=(\underbrace{0_{k\times l},...,0_{k\times l}}_{r})$
where $0_{k\times l}$ is the $k$ by $l$ matrix of zeros.  Recall that these representations form a dense set in $\XC{r}(G)^{red}$, by 
Lemma \ref{lem:GLm-dense}.

Let $\mathrm{diag}(a_{1},....,a_{n})$ be an $n\times n$ matrix whose
$(i,j)$-entry is 0 if $i\not=j$ and is equal to $a_{i}$ otherwise.
Then $\mathsf{Stab}_{\rho}=\C^{*}\times\mathbb{C}^{*}$ is given by \[
\mathrm{diag}(\underbrace{\lambda,...,\lambda,}_{n_{1}}\overbrace{\mu,...,\mu}^{n_{2}}).\]

We note that the action of the center is trivial so we often consider
the stabilizer with respect to the action of $G$ modulo its center.

Then the cocycles satisfy \begin{align*}
 & Z^{1}(\F_r; \Ad_{\rho})\cong\mathfrak{g}^{r}=\\
 & =\bigg\{\left(\begin{array}{cc}
\vec{A} & \vec{B}\\
\vec{C} & \vec{D}\end{array}\right)\ \bigg|\ \vec{A}\in M_{n_{1}\times n_{1}}^{r},\vec{B}\in M_{n_{1}\times n_{2}}^{r},\vec{C}\in M_{n_{2}\times n_{1}}^{r},\vec{D}\in M_{n_{2}\times n_{2}}^{r}\bigg\},\end{align*}
 which have dimension $n^{2}r$ since this is the tangent space to
a representation and the representation variety is smooth. The coboundaries
are given by $B^{1}(\F_r; \Ad_{\rho})\cong$ \[
\cong\left\{ \left(\begin{array}{cc}
A & B\\
C & D\end{array}\right)-\left(\begin{array}{cc}
\vec{X} & \vec{0}_{n_{1}\times n_{2}}\\
\vec{0}_{n_{2}\times n_{1}} & \vec{Y}\end{array}\right)\left(\begin{array}{cc}
A & B\\
C & D\end{array}\right)\left(\begin{array}{cc}
\vec{X}^{-1} & \vec{0}_{n_{1}\times n_{2}}\\
\vec{0}_{n_{2}\times n_{1}} & \vec{Y}^{-1}\end{array}\right)\right\} \]
 \[
\cong\left\{ \left(\begin{array}{cc}
A & B\\
C & D\end{array}\right)-\left(\begin{array}{cc}
\vec{X}A\vec{X}^{-1} & \vec{X}B\vec{Y}^{-1}\\
\vec{Y}C\vec{X}^{-1} & \vec{Y}D\vec{Y}^{-1}\end{array}\right)\right\} ,\]
 for a fixed element $\left(\begin{array}{cc}
A & B\\
C & D\end{array}\right)\in\mathfrak{g}$. It has dimension $n^{2}-2$ since it is the tangent space to the
$G$-orbit of $\rho$ which has dimension equal to that of the group
minus its stabilizer.

Thus with respect to the torus action, \begin{equation}
H^{1}(\F_r; \Ad_{\rho})\cong H^{1}(\F_r; \Ad_{\rho_{1}})\oplus H^{1}(\F_r; 
\Ad_{\rho_{2}})\oplus W,\label{eq:H1-decomposition}\end{equation} where $W$ exists since 
the torus action is reductive.  By considering the Euler characteristic, one has that 
$$\dim_\C H^0(\F_r;\Ad_{\rho})-\dim_\C H^1(\F_r;\Ad_{\rho})=(1-r)\dim_\C\glm{n}.$$  Then since $H^0(\F_r;\Ad_{\rho})=
Z^0(\F_r;\Ad_{\rho})$ is the centralizer in $\mathfrak{g}$ of the image of $\rho$, we calculate: \begin{eqnarray*}
\dim_{\C}H^{1}(\F_r; \Ad_{\rho}) & = & n^{2}(r-1)+2,\\
\dim_{\C}H^{1}(\F_r; \Ad_{\rho_{i}}) & = & n_{i}^{2}(r-1)+1,\quad i=1,2.\end{eqnarray*}
This then implies $\dim_{\C}H^{1}(\F_r; \Ad_{\rho})\aq(\C^{*}\times\C^{*})=n^{2}(r-1)+1=\dim_{\C}\XC{r}(G),$
since the diagonal of the $\C^{*}\times\C^{*}$-action is the center
which acts trivially. We conclude that\[
\dim_{\C}W=(n^{2}-n_{1}^{2}-n_{2}^{2})(r-1)=2n_{1}n_{2}(r-1).\]

Explicitly, the $\mathsf{Stab}_{\rho}$ action on $H^{1}(\F_r; \Ad_{\rho})$
is given by:

\begin{align*}
& \mathrm{diag}(\underbrace{\lambda,...,\lambda,}_{n_{1}}\overbrace{\mu,...,\mu}^{n_{2}})\cdot\left[\left(\begin{array}{cc}
\vec{A} & \vec{B}\\
\vec{C} & \vec{D}\end{array}\right)\right]\mapsto\left[\left(\begin{array}{cc}
\vec{A} & \lambda\vec{B}\mu^{-1}\\
\mu\vec{C}\lambda^{-1} & \vec{D}\end{array}\right)\right]\end{align*}
 which respects representatives up to coboundary.

So, the action on $H^{1}(\F_r; \Ad_{\rho_{1}})\oplus H^{1}(\F_r; \Ad_{\rho_{2}})$
is trivial (but not so on $W$) and we conclude \[
H^{1}(\F_r; \Ad_{\rho})\aq(\C^{*}\times \C^*)\cong H^{1}(\F_r; \Ad_{\rho_{1}})\oplus H^{1}(\F_r; \Ad_{\rho_{2}})\oplus\left(W\aq(\C^{*}\times 
\C^*)\right).\]

Therefore, by Proposition \ref{pro:Singular-GLcase}, we have established
that $0$ is a singularity (solution to the generators of the singular
locus) of $W\aq(\C^{*}\times\C^{*})$ which then implies it is a singularity
to $H^{1}(\F_r; \Ad_{\rho})\aq(\C^{*}\times\C^{*})$ (whenever $\dim_\C W>2$)
which then in turn implies any $\rho\in U_{r,n}$ is a singularity in $\XC{r}(G)$ by Lemma
\ref{tangentspace} (note $\rho=\rho^{ss}$ here).  $U_{r,n}$ is dense in $\XC{r}(\GLm{n})^{red}$ by Lemma \ref{lem:GLm-dense}.  
Then Lemma \ref{adherence:1} applies to show that $\XC{r}(\GLm{n})^{sing}=\XC{r}(\GLm{n})^{red}$  whenever $\dim_\C W=2n_{1}n_{2}(r-1)>2$; 
that is, whenever $(r,n)\not=(2,2).$

Now let $[\rho]\in \XC{r}(\SLm{n})$.  Then it is easy to see that $[\rho]\in \XC{r}(\SLm{n})^{red}$ if and only if $[\rho]\in \XC{r}(\GLm{n})^{red}$.  
Then Theorem \ref{singularequivalence} and the previously established case together imply $\XC{r}(\SLm{n})^{red}=\XC{r}(\SLm{n})^{sing}.$

This finishes the proof of Theorem \ref{theorem1} for the groups $\SLm{n}$ and $\GLm{n}$.
\end{proof}

\begin{rem}
We note that the cohomology decomposition used in the proof depends on the decomposition of $\rho$.  For instance, in the $2\times 2$ 
determinant 1 case, the reducible representation takes values in $\SLm{1}\times \GLm{1}=\C^0\times \C^*$, where $\C^0$ is a point.  
Then by Lemma \ref{complexcone}:  \begin{eqnarray*}H^1(\F_r,\Ad_{\rho})\aq \C^*&\cong& H^1(\F_r,\Ad_{\rho_1})\oplus H^1(\F_r,\Ad_{\rho_2})\oplus (W\aq \C^*)\\ &\cong& \C^0\times 
\C^r\times \left((\C^{2r}/\C^2)\aq \C^*\right)\\ 
&\cong&\C^r\times \C^{2r-2}\aq_2\C^*\\
&\cong& \C^r\times \mathcal{C}_\C(\C\p^{r-2}\times \C\p^{r-2}).\end{eqnarray*}
\end{rem}

\begin{rem}
The above proof works directly, with suitable modifications for the case $G=\SLm{n}$.
For instance the action of the stabilizer in this case is $\mathsf{Stab}_{\rho}=\C^*$ given by $$\mathrm{diag}(\underbrace{\lambda,...,\lambda,}_{n_1}
\overbrace{\mu,...,\mu}^{n_2}),$$ where $\lambda^{n_1}\mu^{n_2}=1$ which is equivalent to $\mu=\lambda^{\frac{-n_1}{n_2}}$.  
The cocycles satisfy \begin{align*}
&Z^1(\F_r;\Ad_{\rho})\cong\mathfrak{g}^r=\\
&=\bigg\{\left( \begin{array}{cc}\vec{A}& \vec{B}\\\vec{C}&\vec{D}\end{array}\right)\ \bigg|\ \vec{A}\in M^r_{n_1\times n_1}, 
\vec{B}\in M^r_{n_1\times n_2}, \vec{C}\in M^r_{n_2\times n_1}, \vec{D}\in M^r_{n_2\times n_2},\\
&\hspace{3in} \tr{A_i}=-\tr{D_i}, 1\leq i\leq r \bigg\},\end{align*} 
which have dimension $(n^2-1)r$.  The rest carries over without significant change.
\end{rem}

\begin{rem}
Similar results for the moduli of tuples of generic matrices have been obtained in 
\cite{LBPr}, and with respect to the moduli of vector bundles similar results have been 
obtained in \cite{Las}.
\end{rem}

\subsection{Proof of Theorem \ref{theorem1} Case 2:  $\SUm{n}$ or $\Um{n}$}

Let $K=\SUm{n}$ or $\Um{n}$ and let $\mathfrak{k}$ be its Lie algebra in either case.
 
The tangent space at a point $[\rho]\in\XC{r}(K)$ is defined from the semi-algebraic 
structure; that is, any real semi-algebraic set has a well-defined coordinate ring which allows one to define the Zariski
tangent space as we did at the start of this section (see \cite{BCR}). At smooth points this corresponds to 
the usual tangent space defined by differentials.  It is not hard to see that the semi-algebraic set $\XC{r}(K)$ is a subset of the real points 
of $\XC{r}(K_{\C})$.  Then, the Zariski tangent space of $\XC{r}(K)$ at $[\rho]$,
$T_{[\rho]}(\XC{r}(K))$, consists of the real points of the complex Zariski tangent
space $T_{[\rho]}(\XC{r}(K_\C))$.

As is true for $K_\C$-representations, we define a $K$-representation to be irreducible if it does not admit any proper (non-trivial) invariant
subspaces with respect to the standard action on $\C^n$.  As with $K_\C$-valued representations, we call a $K$-valued representation reducible if it is
not irreducible.

\begin{lem}\label{kredgred}
$\XC{r}(K_\C)^{red}\cap\XC{r}(K)=\XC{r}(K)^{red}$
\end{lem}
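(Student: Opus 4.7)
The plan is to reduce both containments to the intrinsic condition on a unitary representative $\rho$ of $[\rho]$ that $\rho(\F_r)$ admits a non-trivial invariant subspace of $\C^n$, a condition that does not reference $K$ or $K_\C$ at all. The containment $\XC{r}(K)^{red}\subseteq\XC{r}(K_\C)^{red}\cap\XC{r}(K)$ is then immediate: the definitions of reducibility for $K$- and $K_\C$-valued representations refer to exactly this same notion of invariant subspace, and every unitary representative is \emph{a fortiori} a $K_\C$-valued representative.

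For the reverse containment, the key input will be the classical fact that every unitary representation $\rho:\F_r\to K$ is polystable when viewed in $\RC{r}(K_\C)$: if $V\subset\C^n$ is $\rho(\F_r)$-invariant then, since each $\rho(w)$ is unitary, so is $V^\perp$, and iterating this decomposition expresses $\rho$ as an orthogonal direct sum of irreducible unitary subrepresentations. Consequently $\rho$ already sits inside the unique closed $K_\C$-orbit of its extended orbit equivalence class in $\XC{r}(K_\C)$.

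Given $[\rho]\in\XC{r}(K_\C)^{red}\cap\XC{r}(K)$ with unitary representative $\rho$, the definition of $\XC{r}(K_\C)^{red}$ as the image of $\RC{r}(K_\C)^{red}$ produces some reducible representation in the $K_\C$-extended orbit equivalence class of $\rho$. I would then upgrade this to the statement that the polystable representative of that class is itself reducible, using the closedness of the reducible locus $\RC{r}(K_\C)^{red}$ in $\RC{r}(K_\C)$ (established in Lemma \ref{algebraicset}) together with the standard fact that the unique closed $K_\C$-orbit of an extended orbit equivalence class lies in the closure of every orbit in the class. Since $\rho$ is already polystable and the polystable representative is unique up to $K_\C$-conjugation, this forces $\rho$ to be $K_\C$-conjugate to a reducible representation, and hence to admit a non-trivial $\rho(\F_r)$-invariant subspace of $\C^n$, which is precisely the condition $[\rho]\in\XC{r}(K)^{red}$.

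The main obstacle is the propagation of reducibility from an arbitrary reducible representative of the extended orbit class to the polystable representative; once that soft GIT statement is established the rest of the argument is essentially tautological. An alternative route, if the propagation step proved awkward, would be to invoke a Kempf--Ness-type identification of $\RC{r}(K)/K$ with the set of polystable points $\RC{r}(K_\C)^{ss}/K_\C$ and thereby conclude directly that the chosen unitary representative is already the distinguished polystable point in its $K_\C$-class.
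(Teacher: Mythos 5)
Your proof is correct and in fact more careful than the one in the paper.  The paper's proof first invokes the embedding $\XC{r}(K)\subset\XC{r}(K_\C)$ and then asserts that it ``suffices to prove that every $K$-valued representation is $K$-conjugate to a reducible representation if and only if it is $K_\C$-conjugate to a reducible representation,'' finishing with a one-line Gram--Schmidt remark.  This suppresses exactly the subtlety you isolate: by definition $[\rho]\in\XC{r}(K_\C)^{red}$ only says that the \emph{extended orbit class} of $\rho$ contains some reducible $\psi$, not that $\rho$ is $K_\C$-conjugate to $\psi$.  You close that gap by observing that a unitary $\rho$ is automatically polystable, so $\mathsf{Orb}_\rho$ is the unique closed orbit in $[\rho]$, which lies in $\overline{\mathsf{Orb}_\psi}$; then the $G$-invariance and Zariski-closedness of $\RC{r}(K_\C)^{red}$ (this closedness is what the cited Lemma's proof actually establishes — its statement is about $\XC{r}(G)^{red}$, so you should really appeal to the stability/openness fact directly rather than to the Lemma's conclusion) forces the polystable representative, i.e.\ $\rho$, to be reducible.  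After that the easy containment and the translation back to ``admits a nontrivial invariant subspace'' are identical to the paper's.  In short, the paper takes an elementary shortcut (Gram--Schmidt) that only visibly addresses the change-of-basis issue once you already know $\rho$ itself is reducible, while your argument supplies the missing GIT bridge from ``reducible somewhere in the extended orbit'' to ``the unitary representative is reducible.''  Your suggested alternative via Kempf--Ness is also sound, and is in fact the mechanism behind the inclusion $\XC{r}(K)\subset\XC{r}(K_\C)$ cited from \cite{FlLa}, so the two routes are not as independent as they might look.
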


\begin{proof}
First note that $\XC{r}(K)\subset \XC{r}(K_\C)$ (see \cite{FlLa}).  So it suffices to prove that every $K$-valued representation is $K$-conjugate
to a reducible representation if and only if it is $K_\C$-conjugate to a reducible representation.

Let $\rho$ be a $K$-representation and suppose that it is $K$-conjugate to a representation that admits a non-trivial proper invariant subspace of 
$\C^n$, then since $K\subset K_\C$ it is true that $\rho$ is $K_\C$-conjugate to a reducible representation.  Conversely, suppose that a
$K$-representation $\rho$ is $K_\C$-conjugate to a reducible representation.  However, conjugating by $K_\C$ is simply a change-of-basis, and such a 
change-of-basis is always possible by conjugating by $K$ by using the Gram-Schmidt algorithm.
\end{proof}

\begin{lem}\label{ksinggsing}
$\XC{r}(K_\C)^{sing}\cap\XC{r}(K)=\XC{r}(K)^{sing}$
\end{lem}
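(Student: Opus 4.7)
The plan is to prove the set equality $\XC{r}(K_\C)^{sing}\cap\XC{r}(K)=\XC{r}(K)^{sing}$ by a direct dimension count, using the two comparisons already made available in the excerpt: (i) the Zariski tangent spaces on the two sides have matching real/complex dimensions, and (ii) the Krull dimensions of the two character varieties also match across the real/complex divide.

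First, I would invoke the paragraph immediately preceding the lemma, which identifies the real Zariski tangent space $T_{[\rho]}(\XC{r}(K))$ with the real points of the complex Zariski tangent space $T_{[\rho]}(\XC{r}(K_\C))$. Since the latter is the complexification of the former (as $\XC{r}(K)$ sits inside $\XC{r}(K_\C)$ as a real form), one gets the equality of dimensions
\[
\dim_\R T_{[\rho]}\XC{r}(K)\; =\; \dim_\C T_{[\rho]}\XC{r}(K_\C).
\]
Next, I would cite the analogous matching of Krull dimensions, $\dim_\R\XC{r}(K)=\dim_\C\XC{r}(K_\C)$, which was already used in the proof of Corollary \ref{singularequivalence}. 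This follows because $K$ is a maximal compact in $K_\C$, so $\RC{r}(K)$ is a maximal compact real form of $\RC{r}(K_\C)$, and passing to orbit spaces versus GIT quotients preserves this dimensional matching.

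With both matchings in hand, the algebraic definition of singular locus (defined via the Zariski tangent space and the Krull dimension of the real coordinate ring, as recalled in the excerpt) yields, for any $[\rho]\in\XC{r}(K)\subseteq\XC{r}(K_\C)$, the chain of equivalences
\[
[\rho]\in\XC{r}(K)^{sing} \iff \dim_\R T_{[\rho]}\XC{r}(K) > \dim_\R\XC{r}(K) \iff \dim_\C T_{[\rho]}\XC{r}(K_\C) > \dim_\C\XC{r}(K_\C) \iff [\rho]\in\XC{r}(K_\C)^{sing}.
\]
Intersecting with $\XC{r}(K)$ on the right-hand side delivers the desired equality.

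The main (and only) subtle point is really the assertion quoted from the preceding paragraph that $T_{[\rho]}(\XC{r}(K))$ sits inside $T_{[\rho]}(\XC{r}(K_\C))$ as its real form. This needs the defining ideal of $\XC{r}(K)$ inside its ambient Euclidean space to be obtained from that of $\XC{r}(K_\C)$ by taking real-invariant generators; concretely, one produces real semi-algebraic generators of $\C[\RC{r}(K_\C)]^{K_\C}$ from Hermitian traces of words in the generators, whose restrictions to $K^r$ generate the $K$-invariant real coordinate ring. Granting that identification (which is already asserted in the text), the rest of the argument is a bookkeeping exercise in comparing dimensions.
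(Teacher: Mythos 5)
Your proof is correct and follows essentially the same route as the paper's: the authors likewise reduce the claim to the two dimensional identities $\dim_{\mathbb{R}} T_{[\rho]}\XC{r}(K)=\dim_{\C}T_{[\rho]}\XC{r}(K_\C)$ and $\dim_{\mathbb{R}}\XC{r}(K)=\dim_{\C}\XC{r}(K_\C)$, and then compare tangent-space dimension against Krull dimension as in the definition of singular locus. Your added discussion of the real-form identification of tangent spaces is a reasonable elaboration of what the paper takes for granted from the preceding paragraph, but the argument itself is the same.
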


\begin{proof}
Let $[\rho]\in\XC{r}(K)\subset \XC{r}(K_\C)$.  Then $[\rho]\in \XC{r}(K)^{sing}$ if and only if 
$\dim_{\mathbb{R}} T_{[\rho]}\XC{r}(K)=\dim_{\C}T_{[\rho]}\XC{r}(K_\C)$ is greater than $\dim_{\mathbb{R}}\XC{r}(K)=\dim_{\C}\XC{r}(K_\C)$, the latter
occurring if and only if $[\rho]\in \XC{r}(K_\C)^{sing}$. 
\end{proof}

The last cases to consider to finish the proof of Theorem \ref{theorem1} is $\XC{r}(K)$ in terms of $\SUm{n}$ and $\Um{n}$.

\begin{thm}\label{case3}Let $K$ be either $\Um{n}$ or $\SUm{n}$.  Then
$\XC{r}(K)^{red}=\XC{r}(K)^{sing}$ if $\XC{r}(K_\C)^{red}=\XC{r}(K_\C)^{sing}$.
\end{thm}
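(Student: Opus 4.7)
The plan is almost immediate from the two lemmas preceding the theorem: the real content of Theorem \ref{case3} has already been absorbed into Lemma \ref{kredgred} and Lemma \ref{ksinggsing}, and what remains is a one-line set-theoretic manipulation. Specifically, I would start from the hypothesis
$$\XC{r}(K_\C)^{red}=\XC{r}(K_\C)^{sing}$$
and intersect both sides with the semi-algebraic subset $\XC{r}(K)\subset\XC{r}(K_\C)$, which gives
$$\XC{r}(K_\C)^{red}\cap\XC{r}(K) \;=\; \XC{r}(K_\C)^{sing}\cap\XC{r}(K).$$

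Next I would rewrite each side using the preceding lemmas: Lemma \ref{kredgred} identifies the left-hand side with $\XC{r}(K)^{red}$ (the substance there being that a $K$-representation is $K_\C$-conjugate to a reducible one iff it is $K$-conjugate to one, via Gram--Schmidt), and Lemma \ref{ksinggsing} identifies the right-hand side with $\XC{r}(K)^{sing}$ (the substance there being that the real Zariski tangent space and real dimension of $\XC{r}(K)$ coincide with the complex ones for $\XC{r}(K_\C)$ at real points). Combining these two identifications yields $\XC{r}(K)^{red}=\XC{r}(K)^{sing}$, as required.

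There is really no obstacle to overcome at this stage; the hard analytical and cohomological work has been done in Theorem \ref{case1}, Corollary \ref{singularequivalence}, and the two lemmas just cited. Once this theorem is established, Theorem \ref{theorem1} for $\SUm{n}$ and $\Um{n}$ follows by chaining: Theorem \ref{case1} gives the equality over $\SLm{n}$ and $\GLm{n}$ (for $(r,n)\neq(2,2)$), and then Theorem \ref{case3} transports it to the compact forms. The only minor care needed is to check that each of the set operations above is truly compatible with the semi-algebraic structure (so that intersections and tangent spaces behave as the lemmas claim), but this is already handled by the discussion of the ball topology and the real coordinate ring from Section \ref{charvar}.
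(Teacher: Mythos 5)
Your proposal is correct and takes exactly the same route as the paper: the paper's proof of Theorem \ref{case3} is the single line ``This follows directly by Lemmas \ref{kredgred} and \ref{ksinggsing},'' and you have simply spelled out the intersection-with-$\XC{r}(K)$ step that makes this immediate.
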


\begin{proof}
This follows directly by Lemmas \ref{kredgred} and \ref{ksinggsing}.
\end{proof}

Since we have already established in Theorem \ref{case1} that for 
$r,n\geq2$ and $K\in\{\Um{n},\SUm{n}\}$, $\XC{r}(K_\C)^{red}=\XC{r}(K_\C)^{sing}$ if and only if $(r,n)\not=(2,2)$, Theorem \ref{case3} is enough 
to finish the proof of Theorem \ref{theorem1}.

\subsection{Iterative reducibles and the Singular Stratification}
As above let $K$ be either $\Um{n}$ or $\SUm{n}$ and $G=K_\C$, and 
let the $N^{\text{th}}$ singular stratum be defined by $$\mathsf{Sing}_N(\XC{r}(G)) =(\cdots((\XC{r}(G))^{sing})^{sing \cdots})^{sing},$$ which 
is well-defined since each singular locus is a variety and as such has a singular locus itself.

The $N^{\text{th}}$ level reducibles $$\mathsf{Red}_N(\XC{r}(G))=(\cdots((\XC{r}(G))^{red})^{red \cdots})^{red}$$ is defined inductively in the 
following way.

Let $\mathsf{Red}_1(\XC{r}(G))=\XC{r}(G)^{red}$.  For $k\geq1$ define 
$\mathsf{Red}_k(\XC{r}(G))^{(k+1)}$ to be the set of $\rho\in \mathsf{Red}_k(\XC{r}(G))$ which is minimally reducible, that is has a decomposition into irreducible sub-representations that has minimal summands.  We define $\mathsf{Red}_{k+1}(\XC{r}(G))=\mathsf{Red}_k(\XC{r}(G))-\mathsf{Red}_k(\XC{r}(G))^{(k+1)}$ to be the complement of $\mathsf{Red}_k(\XC{r}(G))^{(k+1)}$ in $\mathsf{Red}_k(\XC{r}(G))$.  Thus, $\mathsf{Red}_1(\XC{r}(G))^{(2)}$ is always the reducibles that have exactly 2 irreducible subrepresentations--these are exactly the ones we considered in the proof of Theorem \ref{case1}. More generally, $\mathsf{Red}_k(\XC{r}(G))^{(k+1)}$ are the representations which decompose into exactly $k+1$ irreducible sub-representations. For example, $\mathsf{Red}_2(\XC{r}(\SLm{3}))$ are the representations conjugate to a representation that has its semi-simplification diagonal, and $\mathsf{Red}_3(\XC{r}(\SLm{3}))=\emptyset$.

Likewise we have $\mathsf{Red}_N(\XC{r}(K))$ and $\mathsf{Sing}_N(\XC{r}(K))$.
\begin{thm}\label{classification}
Let $r,n\geq 2$ and $(r,n)\not=(2,2)$.  If $N\geq 1$, then
$\mathsf{Sing}_N(\XC{r}(G))\cong \mathsf{Red}_N(\XC{r}(G))$ and $\mathsf{Sing}_N(\XC{r}(K))\cong \mathsf{Red}_N(\XC{r}(K))$.
\end{thm}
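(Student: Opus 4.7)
The proof should proceed by induction on $N$. The base case $N = 1$ is Theorem \ref{theorem1}. For the inductive step, assume $\mathsf{Sing}_N(\XC{r}(G)) = \mathsf{Red}_N(\XC{r}(G))$; then by the definition of iterated singular locus, $\mathsf{Sing}_{N+1}(\XC{r}(G)) = (\mathsf{Red}_N(\XC{r}(G)))^{sing}$, so it suffices to identify the singular locus of $\mathsf{Red}_N$ inside $\XC{r}(G)$ with $\mathsf{Red}_{N+1}$. As in the base case, I would carry out the analysis for $G = \GLm{n}$, then transfer to $\SLm{n}$ via the étale equivalence of Corollary \ref{etalequiv} applied to the reducible strata, and finally pass from the complex to the compact groups exactly as in Theorem \ref{case3} using Lemmas \ref{kredgred} and \ref{ksinggsing}.

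The approach I would take is to describe $\mathsf{Red}_N$ as the union, over partitions $n = n_1 + \cdots + n_{N+1}$ with $n_i \geq 1$, of the images of the direct-sum morphisms $\Phi_{[n_1,\ldots,n_{N+1}]} \colon \prod_{i=1}^{N+1} \XC{r}(\GLm{n_i}) \to \XC{r}(\GLm{n})$ sending $([\rho_i]) \mapsto [\bigoplus_i \rho_i]$. At a minimally reducible point $[\rho] \in \mathsf{Red}_N^{(N+1)}$, with $\rho = \rho_1 \oplus \cdots \oplus \rho_{N+1}$ all irreducible on $V_i \cong \C^{n_i}$, Luna's slice theorem (Lemma \ref{tangentspace}) combined with the $\F_r$-module decomposition $\mathfrak{g} = \bigoplus_{i,j} \hm(V_i, V_j)$ yields
\[
H^1(\F_r; \mathfrak{g}_{\Ad_\rho}) = \bigoplus_{i=1}^{N+1} H^1(\F_r; \End V_i) \oplus \bigoplus_{i \neq j} H^1(\F_r; \hm(V_i, V_j)),
\]
with $\mathsf{Stab}_\rho = (\C^*)^{N+1}$ acting trivially on the diagonal summands and by weights $\lambda_i \lambda_j^{-1}$ on the off-diagonal ones. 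The diagonal piece coincides with the image of $d\Phi_{[n_1,\ldots,n_{N+1}]}$, smooth of the expected dimension by Lemma \ref{lem:smooth}, while off-diagonal directions correspond to deformations that merge blocks and thus exit $\mathsf{Red}_N$. Hence $[\rho]$ is a smooth point of $\mathsf{Red}_N$, so $\mathsf{Red}_N^{(N+1)} \cap (\mathsf{Red}_N)^{sing} = \emptyset$.

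For $[\rho] \in \mathsf{Red}_{N+1}$, the polystable representative decomposes into $k \geq N + 2$ irreducibles, and distinct regroupings of these summands into $N + 1$ blocks realize $[\rho]$ in the image of several of the maps $\Phi_{[m_1,\ldots,m_{N+1}]}$. Each such regrouping contributes an independent off-diagonal invariant direction (coming from a fresh pair of newly separated blocks, via the invariant monomials pairing them) to the Zariski tangent space of $\mathsf{Red}_N$ at $[\rho]$; an Euler-characteristic dimension count as in the proof of Theorem \ref{case1} then shows this tangent space strictly exceeds $\dim \mathsf{Red}_N$, forcing $[\rho] \in (\mathsf{Red}_N)^{sing}$. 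An adherence argument in the style of Lemma \ref{adherence:1}, applied to the dense substratum $\mathsf{Red}_{N+1}^{(N+2)} \subset \mathsf{Red}_{N+1}$ (generalizing Lemma \ref{lem:GLm-dense} and Corollary \ref{cor:SLm-dense}), propagates the singularity conclusion to all of $\mathsf{Red}_{N+1}$, completing the induction. The principal obstacle is verifying the linear independence of the off-diagonal tangent contributions arising from distinct regroupings, which reduces to a combinatorial check on the invariant monomials of the weighted $(\C^*)^{N+1}$-quotient of the off-diagonal cohomology, analogous to but richer than the computation in Proposition \ref{pro:Singular-GLcase}.
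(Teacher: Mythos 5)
Your proposal follows the same strategy as the paper's one-line sketch (induction on the irreducible block decomposition, treating each block as a representation into a smaller $\GLm{k}$, then transferring to $\SLm{n}$ and the compact groups), and your description of the singular half is essentially the right picture: $\mathsf{Red}_N$ branches at a point of $\mathsf{Red}_{N+1}$, one sheet per regrouping of the $\geq N+2$ irreducible summands into $N+1$ blocks, so singularity follows as soon as more than one regrouping exists. Seen this way the ``linear independence of tangent contributions'' you flag as the principal obstacle is actually avoidable: in the slice, the image of the non-stable locus of $W$ under the $\mathsf{Stab}_\rho$ quotient is a union of irreducible pieces indexed by regroupings, and a union of more than one component through a point is singular there regardless of tangent dimensions.

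There is, however, a genuine gap in the smoothness half of your inductive step, and it is equally unaddressed in the paper's sketch. Your slice computation at a minimally reducible point takes $\mathsf{Stab}_\rho\cong(\C^*)^{N+1}$, which requires the $N+1$ irreducible blocks to be pairwise non-isomorphic. If $\rho_i\cong\rho_j$ for some $i\neq j$, the stabilizer contains a $\GLm{m}$ acting on the multiplicity space and the slice quotient acquires new singularities not accounted for by the block decomposition. Concretely, with $G=\SLm{2}$ and $r=3$ (so $(r,n)\neq(2,2)$ and $\mathsf{Red}_2=\emptyset$ because $n=2$ forces exactly two summands), $\mathsf{Red}_1=\XC{3}(\SLm{2})^{red}$ is the image of $(\C^*)^3$ under $\lambda\mapsto[\lambda\oplus\lambda^{-1}]$, i.e.\ $(\C^*)^3$ modulo coordinatewise inversion; this is singular at the eight $\{\pm1\}$-valued characters, where it is locally the affine cone over the quadratic Veronese image of $\cp^{2}$. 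Hence $\mathsf{Sing}_2(\XC{3}(\SLm{2}))\neq\emptyset=\mathsf{Red}_2(\XC{3}(\SLm{2}))$, and the claim ``$\mathsf{Red}_N^{(N+1)}$ lies in the smooth locus of $\mathsf{Red}_N$'' fails at repeated-block points. Any complete argument must either restrict to the stratum of pairwise non-isomorphic blocks or modify the $\mathsf{Red}_N$ filtration so that the repeated-block locus is absorbed into $\mathsf{Red}_{N+1}$. A smaller issue: the reduction to $\SUm{n}$ and $\Um{n}$ requires iterated analogues of Lemmas \ref{kredgred} and \ref{ksinggsing} (that $\mathsf{Sing}_N(\XC{r}(K))$ is the set of real points of $\mathsf{Sing}_N(\XC{r}(K_\C))$), which should be stated and proved rather than invoked from the $N=1$ case.
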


The result follows by induction on the irreducible block forms and observing that each block form now corresponds to $\GLm{k}$, or $\Um{k}$ in the compact cases.

\subsection{Remarks about other groups} \label{remarks}

\subsubsection{General reductive groups.}

Let $G$ be a reductive complex algebraic group. It can be shown (see \cite{Si4}) that the definition given before of an irreducible representation
$\rho:\Gamma\to G$ corresponds exactly to the quotient group $\mathsf{Stab}_{\rho}/Z(G)$ being finite.

\begin{prop}\label{adjoint}
If the adjoint action of $\rho$ is irreducible on $\mathfrak{g}$, then $[\rho]$ is smooth in $\XC{r}(G)$ 
\end{prop}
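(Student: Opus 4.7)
The plan is to apply Lemma~\ref{tangentspace} to $\rho$ itself after using Schur's lemma to show that $\mathsf{Stab}_\rho$ acts trivially on the relevant cohomology, and then to match dimensions using the Euler characteristic formula already invoked in the proof of Theorem~\ref{case1}.

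First I would dispose of some easy cases. The space $H^0(\F_r;\mathfrak{g}_{\Ad_\rho})$ is an $\F_r$-submodule of $\mathfrak{g}$, so by irreducibility it is either $0$ or all of $\mathfrak{g}$. In the latter case the adjoint action is trivial, which together with irreducibility forces $\dim\mathfrak{g}\leq 1$ and reduces the claim to a torus (the $n=1$ situation already handled). Thus I may assume $H^0(\F_r;\mathfrak{g}_{\Ad_\rho})=0$. In particular the center $\mathfrak{z}(\mathfrak{g})\subset H^0$ vanishes, so $\mathfrak{g}$ is semisimple. Moreover, if $\rho$ were contained in a proper parabolic $P\subset G$, then $\mathrm{Lie}(P)$ would be a proper nonzero $\Ad_\rho$-invariant subspace of $\mathfrak{g}$, contradicting irreducibility; hence $\rho$ is irreducible in the sense of the paper, and therefore polystable, so $\rho=\rho^{ss}$ and Lemma~\ref{tangentspace} applies directly.

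The heart of the argument is to prove that $\mathsf{Stab}_\rho$ acts trivially on $H^1(\F_r;\mathfrak{g}_{\Ad_\rho})$. For $g\in\mathsf{Stab}_\rho$ the automorphism $\Ad_g\colon\mathfrak{g}\to\mathfrak{g}$ commutes with every $\Ad_{\rho(w)}$, so by Schur's lemma applied to the irreducible $\F_r$-module $\mathfrak{g}$ we get $\Ad_g=\lambda_g\cdot\mathrm{Id}$ for some $\lambda_g\in\C^*$. Since $\Ad_g$ preserves the Lie bracket,
$$\lambda_g[X,Y]=[\Ad_g X,\Ad_g Y]=\lambda_g^2[X,Y]$$
for all $X,Y\in\mathfrak{g}$; as $\mathfrak{g}$ is semisimple and nonzero, $[\mathfrak{g},\mathfrak{g}]\neq 0$, so $\lambda_g=1$. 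Therefore $\mathsf{Stab}_\rho\subset\ker\Ad$, acting trivially on $\mathfrak{g}$ and consequently on $Z^1(\F_r;\mathfrak{g}_{\Ad_\rho})\cong\mathfrak{g}^r$ and on $H^1(\F_r;\mathfrak{g}_{\Ad_\rho})$.

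Combining this with Lemma~\ref{tangentspace} gives
$$T_{[\rho]}\XC{r}(G)\cong T_0\bigl(H^1(\F_r;\mathfrak{g}_{\Ad_\rho})\aq\mathsf{Stab}_\rho\bigr)\cong H^1(\F_r;\mathfrak{g}_{\Ad_\rho}),$$
and the Euler characteristic identity $\dim_\C H^0-\dim_\C H^1=(1-r)\dim_\C\mathfrak{g}$ together with $H^0=0$ yields $\dim_\C H^1=(r-1)\dim_\C\mathfrak{g}$. Since $\mathfrak{g}$ is semisimple, this matches $\dim_\C\XC{r}(G)$, and $[\rho]$ is a smooth point. The main obstacle is the Schur-plus-bracket step showing $\lambda_g=1$; once that rigidity of $\mathsf{Stab}_\rho$ is in hand, the remainder is a dimension count against formulas already in the paper.
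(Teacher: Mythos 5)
Your argument is essentially the paper's: both hinge on showing that $\mathsf{Stab}_{\rho^{ss}}$ is central, so it acts trivially on $H^1(\F_r;\mathfrak{g}_{\mathrm{Ad}_{\rho^{ss}}})$, and then Lemma~\ref{tangentspace} gives smoothness. The paper asserts centrality without justification, whereas you supply the Schur-plus-bracket proof and also check that $\rho$ is already polystable (so $\rho=\rho^{ss}$); your closing dimension count could be bypassed by simply noting that once $\mathsf{Stab}_\rho$ acts trivially, the slice $H^1(\F_r;\mathfrak{g}_{\mathrm{Ad}_{\rho}})\aq\mathsf{Stab}_\rho=H^1(\F_r;\mathfrak{g}_{\mathrm{Ad}_{\rho}})$ is a vector space and hence the \'etale local model of Remark~\ref{etalenbhd} is smooth.
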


\begin{proof}
If $\mathrm{Ad}_\rho$ is irreducible, then $\mathsf{Stab}_{\rho^{ss}}$ is central and so $\mathsf{Stab}_{\rho^{ss}}$ acts trivially on $H^1(\F_r;\mathfrak{g}_{\mathrm{Ad}_{\rho^{ss}}})$.  This means that $0$ is not in the Jacobian ideal of $H^1(\F_r;\mathfrak{g}_{\mathrm{Ad}_{\rho^{ss}}})\aq \mathsf{Stab}_{\rho^{ss}}$.  So, by Lemma \ref{tangentspace}, $[\rho]$ is smooth in $\XC{r}(G)$.
\end{proof}

From the proof of Proposition \ref{adjoint}, we conclude the following corollary.
\begin{cor}
Let $G$ be a complex reductive algebraic group and $\rho\in \RC{r}(G)$ is irreducible with central stabilizer. Then $[\rho]$ is smooth in $\XC{r}(G)$.
\end{cor}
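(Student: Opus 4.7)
The plan is to adapt the argument of Proposition \ref{adjoint} almost verbatim, with the central stabilizer hypothesis now given directly in place of being deduced from irreducibility of $\mathrm{Ad}_\rho$ on $\mathfrak{g}$.

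First, because $\rho$ is irreducible, it is a stable point of the $\p G$-action on $\RC{r}(G)$ (by the Jaffe--Millson / Sikora theorem cited earlier), so its orbit is closed and $\rho$ is already polystable. This lets us take $\rho^{ss}=\rho$ when invoking Lemma \ref{tangentspace}. By the hypothesis of central stabilizer, $\mathsf{Stab}_\rho \subseteq Z(G)$.

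The key observation is that $Z(G)$ acts trivially on $\mathfrak{g}$ through the adjoint representation. Consequently $\mathsf{Stab}_\rho$ acts trivially on the cocycles $Z^1(\F_r;\mathfrak{g}_{\mathrm{Ad}_\rho})$ and on the coboundaries $B^1(\F_r;\mathfrak{g}_{\mathrm{Ad}_\rho})$, and therefore trivially on the quotient $H^1(\F_r;\mathfrak{g}_{\mathrm{Ad}_\rho})$. Since the action of the reductive group $\mathsf{Stab}_\rho$ is trivial, the GIT quotient
\[
H^1(\F_r;\mathfrak{g}_{\mathrm{Ad}_\rho}) \aq \mathsf{Stab}_\rho \;\cong\; H^1(\F_r;\mathfrak{g}_{\mathrm{Ad}_\rho})
\]
is simply an affine space, hence smooth everywhere and in particular smooth at the origin.

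Finally, Lemma \ref{tangentspace} gives $T_{[\rho]}\XC{r}(G) \cong T_0(H^1(\F_r;\mathfrak{g}_{\mathrm{Ad}_\rho})\aq \mathsf{Stab}_\rho)$, and Remark \ref{etalenbhd} upgrades this to an \'etale neighborhood identification: a neighborhood of $[\rho]$ in $\XC{r}(G)$ is \'etale-equivalent to a neighborhood of $0$ in a vector space. Since \'etale maps preserve smoothness, $[\rho]$ is smooth in $\XC{r}(G)$. There is no genuine obstacle in this argument --- the entire content is packaged in the proof of Proposition \ref{adjoint}, and once one notes that the only use of ``$\mathrm{Ad}_\rho$ irreducible'' there was to force the stabilizer to be central, the corollary is immediate.
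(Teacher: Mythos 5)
Your proof is correct and follows essentially the same route as the paper: the paper's Proposition \ref{adjoint} only used irreducibility of $\mathrm{Ad}_\rho$ to force the stabilizer to be central, so with centrality now a hypothesis, the stabilizer acts trivially (via $\mathrm{Ad}$) on $H^1(\F_r;\mathfrak{g}_{\mathrm{Ad}_\rho})$, the GIT quotient in Lemma \ref{tangentspace} degenerates to a vector space, and smoothness follows. Your explicit remarks that irreducibility gives a closed orbit (so $\rho^{ss}=\rho$) and that Remark \ref{etalenbhd} upgrades the tangent-space identification to an \'etale-local one are the right supporting details, spelled out a bit more fully than in the paper's terse version.
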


A representation satisfying the conditions of the above corollary is called \textit{good}.  In other words, $\rho\in\RC{r}(G)^s$ is good if and only if $\mathsf{Stab}_{\rho}/Z(G)$ is trivial.  Letting $\RC{r}(G)^{good}$ be the open subset of good representations, it easily follows that $\XC{r}(G)^{good}:=\RC{r}(G)^{good}/G \subset \XC{r}(G)^s \subset \XC{r}(G)$ is always a smooth manifold.

\cite{HP} shows that our main theorem, i.e. $\XC{r}(G)^{red}=\XC{r}(G)^{sing}$, is not true for all reductive Lie groups $G$ and free groups $\F_r$ since for $\p\SLm{2}$ there are irreducible representations which are singular.  The issue is that the stabilizer of an irreducible representation, modulo the center of $G$, may not be trivial in general.  This is not an issue for $\GLm{n}$ or $\SLm{n}$ since Lemma \ref{lem:smooth} shows the action is free on the set of irreducibles; that is, in these cases a representation is good if and only if it is irreducible.

Let $\mathsf{O}_n$ be the group of $n\times n$ complex orthogonal matrices, and let $\mathsf{Sp}_{2n}$ be the group of $2n\times 2n$ complex symplectic matrices.

\begin{prop}
There exists irreducible representations $\rho:\F_r\to G$ for $G$ any of $\mathsf{O}_n$, $\p\SLm{n}$, and $\mathsf{Sp}_{2n}$ such that $\rho$ is not good.
\end{prop}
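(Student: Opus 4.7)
The plan is to construct, for each of the three group families, an explicit irreducible representation whose stabilizer strictly contains $Z(G)$. The guiding principle is that for these groups (in contrast to $\SLm{n}$ and $\GLm{n}$), irreducibility in the parabolic sense is strictly weaker than irreducibility of the action on the natural module, which is precisely what leaves room for the stabilizer to exceed the center.

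For $\p\SLm{n}$ with $n\geq 2$, I would use the projective Schr\"odinger (``Heisenberg'') representation. Let $\omega=e^{2\pi i/n}$, $A=\mathrm{diag}(1,\omega,\ldots,\omega^{n-1})$, and let $B$ be the cyclic permutation matrix defined by $Be_j=e_{j+1\bmod n}$. Define $\rho:\F_r\to\p\SLm{n}$ (for $r\geq 2$) by sending two generators to $[A]$ and $[B]$ and the remaining generators to the identity. A short check shows $\langle A,B\rangle\subset\GLm{n}$ acts irreducibly on $\C^n$: any $A$-invariant subspace is a sum of coordinate axes, and $B$ permutes these as an $n$-cycle, so the only simultaneously invariant subspaces are $\{0\}$ and $\C^n$. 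Hence $\rho$ avoids every proper parabolic of $\p\SLm{n}$, so it is irreducible. Using the relation $AB=\omega BA$, one then computes directly that the stabilizer of $\rho$ in $\p\SLm{n}$ equals the image $\langle[A],[B]\rangle\cong(\Zm{n})^2$, which is nontrivial since $Z(\p\SLm{n})$ is trivial.

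For $\mathsf{O}_n$ and $\mathsf{Sp}_{2n}$ with $n\geq 2$, I would use a block construction. Fix a nondegenerate decomposition $\C^N=V_1\oplus V_2$ of the natural module (orthogonal direct sum in the $\mathsf{O}$-case, symplectic in the $\mathsf{Sp}$-case). Pick representations $\rho_i:\F_r\to\mathsf{O}(V_i)$, respectively $\rho_i:\F_r\to\mathsf{Sp}(V_i)$, whose underlying linear actions on $V_i$ are irreducible and mutually non-isomorphic; such $\rho_i$ exist once $r$ is large enough. Set $\rho:=\rho_1\oplus\rho_2$. By Schur's lemma the $\rho$-invariant subspaces of $\C^N$ are exactly $\{0\},V_1,V_2,\C^N$, and the two proper ones are nondegenerate, hence non-isotropic. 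Therefore $\rho$ lies in no proper parabolic of $G$, so it is irreducible. The $\GLm{N}$-centralizer of $\rho$ consists of block scalars $\mathrm{diag}(\alpha I,\beta I)$; intersecting with the form-preserving group forces $\alpha,\beta\in\{\pm 1\}$, yielding $\mathsf{Stab}_\rho\cong(\Zm{2})^2$. Since $Z(G)=\{\pm I_N\}$ embeds as the diagonal $\Zm{2}$, we obtain $\mathsf{Stab}_\rho/Z(G)\cong\Zm{2}$, nontrivial.

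The main obstacle I anticipate is verifying irreducibility in the parabolic sense. For $\mathsf{O}_n$ and $\mathsf{Sp}_{2n}$ this reduces to ruling out invariant isotropic subspaces, which relies on combining the Schur-based classification of invariant subspaces with the nondegeneracy of each $V_i$ (and checking that any hypothetical ``graph'' subspace would be incompatible with non-isomorphism). For $\p\SLm{n}$ the subtle step is pinning down the stabilizer in full: one must lift to $\SLm{n}$, track how a stabilizing element twists $A$ and $B$ by roots of unity, and identify the resulting set with the normalizer of $\langle A,B\rangle$ modulo the center.
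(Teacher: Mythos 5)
Your proof is correct and takes a genuinely different, more uniform route than the paper's.  The paper exhibits three ad hoc examples: for $\p\SLm{2}$ a representation into the diagonal/anti-diagonal subgroup, for $\mathsf{O}_n$ a representation with image the full $\pm1$ diagonal group, and for $\mathsf{Sp}_4$ an explicit order-$16$ subgroup.  You instead give two systematic constructions: the finite Heisenberg representation for $\p\SLm{n}$ (of which the paper's $\p\SLm{2}$ example is precisely the $n=2$ case), and a direct-sum construction $\rho_1\oplus\rho_2$ with irreducible, pairwise non-isomorphic summands into a nondegenerate orthogonal or symplectic decomposition of the natural module (of which the paper's $\pm1$ diagonal example in $\mathsf{O}_n$ is the case of $n$ one-dimensional blocks).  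The payoff of your approach is twofold: it treats all $n$ uniformly rather than just $n=2$ for $\p\SLm{n}$ and $\mathsf{Sp}_{2n}$, and the mechanism behind the failure of goodness is made transparent --- the parabolic definition of irreducibility only forbids invariant \emph{isotropic} subspaces, while Schur's lemma controls the centralizer via the invariant \emph{nondegenerate} ones.  The paper's explicit matrices, by contrast, are easier to verify by brute force but obscure why the examples work.  Two small points worth tightening: for $\p\SLm{n}$ you only need that $[A],[B]$ lie in $\mathsf{Stab}_\rho$ and are nontrivial, so the full identification of the stabilizer with $(\Zm{n})^2$ (which requires ruling out the outer $\SLm{2}(\Zm{n})$ Weil-type symmetries, since $\rho$ fixes $A,B$ on the nose) can be bypassed; and you should note that scaling $A$ and $B$ to unit determinant is harmless in $\p\SLm{n}$, which you glide over.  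Neither affects the argument.
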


\begin{proof}
It is sufficient in each case to find, for some $n$, a non-parabolic subgroup of $G$ whose centralizer contains a non-central element.  

First consider a $\SLm{2}$-representation $\rho$ contained in the subgroup of diagonal and anti-diagonal matrices (containing at least one non-diagonal element and one non-central element).  Then $\mathsf{Stab}_{\rho}/Z(\SLm{2})$ is trivial, and so such a representation is irreducible.  However $\rho$ also determines an irreducible $\p\SLm{2}$-valued representations consisting of diagonal and anti-diagonal matrices.  However, its stabilizer now contains $\left(
\begin{array}{cc}
 i & 0 \\
 0 & -i
\end{array}
\right)$ since up to conjugation these elements act as scalar multiplication by $-1$ which is trivial for $\p\SLm{2}$-representations but non-trivial for $\SLm{2}$-representations.  This element is not central in $\SLm{2}$.  Thus $\rho$ defines an irreducible representation into $\p\SLm{2}$ that has finite non-central stabilizer, and thus is not good.
 
For $\mathsf{O}_n$ representations consider any representation whose image consists of all matrices of the form $\left\{ \left(
\begin{array}{cccc}
 \pm1 & 0 & 0 & 0 \\
 0 & \pm1 & 0 & 0 \\
 0 & 0 & \ddots & 0 \\
 0 & 0 & 0 & \pm1
\end{array}
\right) \right\}.$   One easily computes that the stabilizer is finite and not trivial and thus they are irreducible with $\mathsf{Stab}_{\rho}/Z(\mathsf{O}_n)$ not trivial and thus are not good.

For $\mathsf{Sp}_{2n}$ representations we can likewise find examples like the following for $n=2$:  let the representation have its image generated by
$$\left\{\pm\left(
\begin{array}{cccc}
 0 & 0 & 0 & 1 \\
 0 & 0 & 1 & 0 \\
 0 & -1 & 0 & 0 \\
 -1 & 0 & 0 & 0
\end{array}
\right),\pm\left(
\begin{array}{cccc}
 i & 0 & 0 & 0 \\
 0 & -i & 0 & 0 \\
 0 & 0 & i & 0 \\
 0 & 0 & 0 & -i
\end{array}
\right),  \pm \left(
\begin{array}{cccc}
 0 & 0 & 0 & -1 \\
 0 & 0 & 1 & 0 \\
 0 & -1 & 0 & 0 \\
 1 & 0 & 0 & 0
\end{array}
\right)\right\},$$ then we get an order 16 subgroup with finite stabilizer and as such is an irreducible with finite non-central stabilizer.  
Again we see that $\mathsf{Stab}_{\rho}/Z(\mathsf{Sp}_{2n})$ is not trivial and thus this representation is not good.

\end{proof}

\begin{rem}
In the case of $\p\SLm{2}$ representations $($and consequently for $\SLm{2}$-valued representations$)$ there are irreducible representations that act 
reducibly on $\mathfrak{g}$.  However, for $\p\SLm{2}$ these are singular points, but for $\SLm{2}$ they are smooth.  This shows that 
$\Ad$-reducibility does not imply non-smoothness in general.  In fact, in $\XC{2}(\p\SLm{2})$ there are simultaneously reducibles that are 
smooth points and irreducibles that are singular.  See \cite{HP}.
\end{rem}
 
\begin{conj}
Let $G$ be a complex reductive algebraic group, and suppose $r\geq 3$.  Then $\XC{r}(G)^{red}\subset \XC{r}(G)^{sing}$, and if $G$ is semi-simple equality holds if and only if $G$ is a Cartesian product of $\SLm{n}$'s.
\end{conj}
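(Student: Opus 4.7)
The plan is to split the conjecture into three independent claims: (i) the inclusion $\XC{r}(G)^{red}\subset\XC{r}(G)^{sing}$ for any reductive $G$ and $r\geq 3$; (ii) the ``if'' direction, that equality holds when $G=\prod_i\SLm{n_i}$; and (iii) the harder ``only if'' direction, that equality fails whenever $G$ is semi-simple and some simple factor is not of type $\SLm{n}$.

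For (i), I would generalize the mechanism of Theorem \ref{case1}. Given $[\rho]\in\XC{r}(G)^{red}$, pass to a polystable representative $\rho^{ss}$ whose image lies in a Levi subgroup $L$ of a proper parabolic $P\subsetneq G$. Then $\mathsf{Stab}_{\rho^{ss}}$ contains $Z(L)$, which strictly contains $Z(G)$, so there is a non-trivial torus $T\subset\mathsf{Stab}_{\rho^{ss}}/Z(G)$ acting by non-trivial weights on $\g=\mathfrak{l}\oplus\mathfrak{u}^+\oplus\mathfrak{u}^-$. Applying Lemma \ref{tangentspace} and decomposing $H^1(\F_r;\g_{\Ad_{\rho^{ss}}})$ under $T$ yields a splitting $H^1\cong H^1_L\oplus W$, where $W$ comes from $\mathfrak{u}^+\oplus\mathfrak{u}^-$. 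An Euler-characteristic computation, as in Theorem \ref{case1}, gives $\dim_\C W=2(\dim\mathfrak{u})(r-1)$, which exceeds $2$ for $r\geq 3$. One then identifies $W\aq T$, via the analogues of Lemma \ref{complexcone} and Proposition \ref{pro:Singular-GLcase}, with a determinantal-type affine cone that is singular at the origin; the singularity then persists in $H^1\aq\mathsf{Stab}_{\rho^{ss}}$.

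For (ii), the character variety of a product splits as a product, $\XC{r}\bigl(\prod_i\SLm{n_i}\bigr)\cong\prod_i\XC{r}(\SLm{n_i})$, because $\RC{r}(\prod_iG_i)=\prod_i\RC{r}(G_i)$ and the conjugation action is diagonal. A point in a product variety is algebraically singular iff at least one coordinate is, and a representation into a product is reducible iff some factor is reducible, since parabolic subgroups of a product are products of parabolics. With $r\geq 3$, each $(r,n_i)\neq(2,2)$, so Theorem \ref{theorem1} applied factorwise delivers the desired equality.

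The principal obstacle is (iii). Suppose $G$ is semi-simple with a simple factor $G_0$ which is not an $\SLm{n}$, for instance $\p\SLm{n}$, $\mathsf{SO}_n$, $\mathsf{Sp}_{2n}$, or an exceptional type. The strategy is to produce an irreducible representation $\rho_0\colon\F_r\to G_0$ with finite but non-central stabilizer, analogous to those constructed in the final proposition of the excerpt, and then show that $[\rho_0]$ is singular in $\XC{r}(G_0)$. By Lemma \ref{tangentspace}, this reduces to showing that $H^1(\F_r;\g_{\Ad_{\rho_0}})\aq(\mathsf{Stab}_{\rho_0}/Z(G_0))$ is singular at the origin. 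By the Chevalley--Shephard--Todd theorem, such a finite-group quotient of a vector space is smooth iff the group acts as a complex reflection group, so the crucial point is to verify, in a case-by-case fashion across the simple types outside type $A$ (plus the non-simply-connected $A$-type groups), that for suitable $\rho_0$ with $r\geq 3$ the induced representation of $\mathsf{Stab}_{\rho_0}/Z(G_0)$ on $H^1$ is \emph{not} generated by pseudoreflections. Producing $\rho_0$ with image in an appropriate rich finite subgroup of $G_0$ and carrying out the reflection-group analysis of its action on $H^1$ is the main hurdle, and is where I expect the proof to require the most work.
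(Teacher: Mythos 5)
The statement you set out to prove appears in the paper only as a \emph{conjecture}: the authors explicitly write that they ``leave the exploration of this interesting conjecture and the description of singular irreducibles to future work.'' There is therefore no proof in the paper to compare yours against; your sketch has to stand on its own.

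Assessed on its own terms, part (ii) of your plan is correct: $\XC{r}$ converts a product of groups to a product of varieties, parabolics of a product are products of parabolics, a point of a product is singular iff some coordinate is, and Theorem~\ref{theorem1} applied factorwise (every $(r,n_i)\neq(2,2)$ once $r\geq 3$) gives the ``if'' direction. Part (i), however, has a genuine gap at the last step. After producing a torus $T\subset\mathsf{Stab}_{\rho^{ss}}/Z(G)$ and arguing that $W\aq T$ is singular at the origin, you assert that ``the singularity then persists in $H^1\aq\mathsf{Stab}_{\rho^{ss}}$.'' This does not follow: $\mathsf{Stab}_{\rho^{ss}}$ may strictly contain $T$ by finite or even positive-dimensional non-toral pieces, and further quotienting can resolve a singularity (a quotient $V/\Gamma$ of a vector space by a finite group is smooth whenever $\Gamma$ acts as a complex reflection group). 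The paper's own Theorem~\ref{case1} sidesteps exactly this by restricting to the dense stratum $U_{r,n}$ of reducibles whose stabilizer is \emph{precisely} the minimal torus and then invoking the adherence argument of Lemma~\ref{adherence:1}; you would need analogues of both ingredients for general reductive $G$, including a description of the generic stabilizer type, which the sketch does not supply. Likewise, the weights of $Z(L)^{\circ}$ on the nilradical of an arbitrary parabolic need not be the clean $\pm 1$ pattern that yields the determinantal quotient of Proposition~\ref{pro:Singular-GLcase}, so even the singularity of $W\aq T$ itself requires an argument. Part (iii) correctly isolates the hard core, and the Chevalley--Shephard--Todd criterion is indeed the right lens when the stabilizer is finite; but beyond the substantial case analysis you acknowledge, one must also exclude the scenario in which \emph{every} irreducible with non-central stabilizer nonetheless acts on $H^1$ as a reflection group, and one must treat all non-$\SLm{n}$ simple factors and their isogeny forms (including $\mathsf{Spin}$, $\mathsf{PSp}_{2n}$, and the exceptional types), not only the classical groups in the paper's final Proposition.
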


We leave the exploration of this interesting conjecture and the description of singular irreducibles to future work.

\subsubsection{What if $\Gamma$ is not free?}
One may wonder what the relationships exist, if any, between reducible representations and singular points in $\mathfrak{X}_{\Gamma}(G)$ for a general finitely generated group $\Gamma$. 

With a given presentation of $\Gamma$ as $\Gamma=\langle x_1,...,x_r\ |\ r_1,...,r_k\rangle$ we can naturally associate the canonical epimorphism 
$\F_r\to \Gamma=F_r/\langle r_1,...,r_k\rangle$ which induces the inclusion $\X_{\Gamma}(G)\subset \X_{\F_r}(G)$ providing $\X_{\Gamma}(G)$ with 
the structure of an affine subvariety.  As such, $\rho$ is irreducible (resp. completely reducible) in $\X_{\Gamma}(G)$ if and only if $\rho$ is 
irreducible (resp. completely reducible) in $\X_{\F_r}(G)$.  

However, the notion of singularity is very far from being well behaved: 
\begin{enumerate}
\item If $\Gamma$ is free Abelian then all representations are reducible and thus the singularities cannot equal the reducibles since the singularities are a proper subset.  So reducibles can be smooth; in fact this example shows all smooth points can be reducible.  
\item The irreducibles are not generally all smooth in the representation variety let alone in the quotient variety; see \cite[Example 38]{Si4}.  Such representations can project to singular points in the quotient (as one might hope is the general situation).  Therefore, there can be representations in $\X_{\Gamma}(G)^{sing}\subset \X_{\Gamma}(G)\subset \X_{\F_r}(G)$ which are smooth in $\X_{\F_r}(G)$.  
\item Singularities in the quotient do not necessarily arise from singularities
in the representation space. For example, if $\Gamma$ is the fundamental
group of a genus 2 surface there exist representations in $\RC{\Gamma}(\SUm{2})$
that are singular but the quotient $\XC{\Gamma}(\SUm{2})\approx\cp^{3}$
is smooth. See \cite{NS2,NR}. 
\item Lemma $\ref{tangentspace}$ and its generalizations (see \cite{Si4}) do not necessarily apply in general.
\end{enumerate}

Therefore, when $\Gamma$ is not free there is little one can say in general. 

\section{Local Structure and Classification of Manifold Cases}\label{manifoldsection}
Having completed the proof of Theorem \ref{theorem1}, we now move on to prove Theorem \ref{theorem2}.  As stated earlier, in \cite{BC} it is established that $\XC{r}(\SUm{2})$ are not topological manifolds when $r\geq4$. They compute explicit examples where the representations (Abelian, non-trivial) are contained in a neighborhood homeomorphic to $\mathcal{C}(\cp^{r-2})\times\mathbb{R}^{r}$, where $\mathcal{C}(X)=(X\times[0,1))/(X\times\{0\})$ is the real open cone over a topological space $X$. From this characterization, simple arguments imply that $\XC{r}(\SUm{2})$ is not a manifold for $r\geq4$. It is also a consequence of the following criterion, which will be useful later. 

\begin{lem}
\label{lem:euclidian-neigh}Let $X$ be a manifold of dimension $n$ and let $d\geq0$. If $\mathcal{C}(X)\times\mathbb{R}^{d}$ is Euclidean
$($i.e, homeomorphic to $\mathbb{R}^{d+n+1})$ then $X$ is homotopically equivalent to $S^{n}$ $($a sphere of dimension $n)$. Also, if $\mathcal{C}(X)\times\mathbb{R}^{d}$ is half-Euclidean $($i.e, homeomorphic to a closed half-space in $\mathbb{R}^{d+n+1})$
then $X$ is homotopically equivalent to either a point or $S^n$.
\end{lem}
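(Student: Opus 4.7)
The plan is to re-express $\mathcal{C}(X)\times\mathbb{R}^{d}$ as a single open cone whose link is (up to homotopy) an iterated suspension of $X$, and then to extract the conclusion by puncturing at the cone point. First I would use the identification $\mathbb{R}^{d}\cong\mathcal{C}(S^{d-1})$ (with the convention $S^{-1}=\emptyset$, so the case $d=0$ is trivially included) together with the standard join--cone formula $\mathcal{C}(A)\times\mathcal{C}(B)\cong\mathcal{C}(A*B)$ to obtain
\[
\mathcal{C}(X)\times\mathbb{R}^{d}\ \cong\ \mathcal{C}(X*S^{d-1}).
\]
Since joining with $S^{d-1}$ is, up to homotopy, the $d$-fold suspension, the link $X*S^{d-1}$ is homotopy equivalent to $\Sigma^{d}X$, and the cone point on the right corresponds to the distinguished point $(v,0)$ on the left. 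The punctured cone $\mathcal{C}(X*S^{d-1})\setminus\{w\}$ deformation retracts onto its link, hence onto $\Sigma^{d}X$.

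In the Euclidean case, $\mathbb{R}^{d+n+1}$ minus any point has the homotopy type of $S^{d+n}$. Comparing the two descriptions of the punctured space gives $\Sigma^{d}X\simeq S^{d+n}$. Since suspension shifts reduced homology, this immediately forces $\tilde{H}_{*}(X)\cong\tilde{H}_{*}(S^{n})$, and I would then invoke Whitehead's theorem together with Poincar\'e duality on the $n$-manifold $X$ to upgrade this homological agreement to an actual homotopy equivalence $X\simeq S^{n}$.

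In the half-Euclidean case, the same cone identification holds, but the target is the closed half-space $\mathbb{H}^{d+n+1}$. I would split on whether the image $p$ of the cone point is interior or boundary. If $p$ is interior, $\mathbb{H}^{d+n+1}\setminus\{p\}\simeq S^{d+n}$ and the Euclidean argument applies verbatim to give $X\simeq S^{n}$. If $p$ is on the boundary, then $\mathbb{H}^{d+n+1}\setminus\{p\}$ linearly deformation retracts to any fixed interior point and is therefore contractible; hence $\Sigma^{d}X$ is contractible, which forces $X$ itself to be contractible, so $X\simeq\mathrm{pt}$.

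The main obstacle is the final step in the Euclidean case: passing from $\Sigma^{d}X\simeq S^{d+n}$ back to $X\simeq S^{n}$. Homological agreement is automatic from the suspension isomorphism, but information can be lost under suspension at the homotopy level --- the double-suspension theorem shows that $\Sigma^{2}$ of a Poincar\'e homology sphere is homeomorphic to $S^{5}$ --- so the manifold hypothesis on $X$ must be used in a genuine way (through fundamental-group control and Poincar\'e duality) to rule out such exotica and promote homology-sphere data to a true homotopy equivalence with $S^{n}$.
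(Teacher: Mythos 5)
Your reduction $\mathcal{C}(X)\times\mathbb{R}^d\cong\mathcal{C}(X*S^{d-1})$ and the resulting equivalence $\Sigma^d X\simeq S^{n+d}$ are sound, and they take a genuinely different route from the paper. The paper never desuspends: it removes the whole slice $\{p\}\times\mathbb{R}^d$ rather than a single point, notes that the complement deformation retracts to $X$, and on the Euclidean side simply writes the punctured space as $\mathbb{R}^{n+d+1}\setminus\mathbb{R}^d\simeq S^n$. Your version has the virtue that removing one point from $\mathbb{R}^{n+d+1}$ unambiguously leaves something homotopy equivalent to $S^{n+d}$; the paper's identification silently assumes that the image of $\{p\}\times\mathbb{R}^d$ under the ambient homeomorphism is a tamely embedded $\mathbb{R}^d$, which it need not be.

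However, your final step is a genuine gap, and the obstacle you flag cannot be engineered around. Poincar\'e duality on a closed oriented $n$-manifold $X$ only reproduces the homology-sphere conclusion you already have; Whitehead's theorem needs simple connectivity or an actual comparison map, and nothing controls $\pi_1(X)$. In fact the example you half-cite is a counterexample to the lemma as stated: for the Poincar\'e homology $3$-sphere $P$ and $d=2$, the double suspension theorem gives $P*S^1=\Sigma^2 P\cong S^5$, so $\mathcal{C}(P)\times\mathbb{R}^2\cong\mathcal{C}(P*S^1)\cong\mathcal{C}(S^5)\cong\mathbb{R}^6$, yet $\pi_1(P)$ is the binary icosahedral group and $P\not\simeq S^3$. (This also makes the paper's tame-embedding assumption fail: inside $\mathbb{R}^6$ the set $\{p\}\times\mathbb{R}^2$ is a wild plane whose complement is $\simeq P$, not $\simeq S^3$.) The correct conclusion obtainable from your argument --- and the one actually needed downstream --- is the homological one: $\tilde{H}_*(X)\cong\tilde{H}_*(S^n)$ in the Euclidean case, and $\tilde{H}_*(X)=0$ or $\tilde{H}_*(X)\cong\tilde{H}_*(S^n)$ in the half-Euclidean case. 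Since the paper only applies the lemma with $X=\cp^m$, and $\cp^m$ has the homology of a point or a sphere exactly when $m\le1$, the subsequent results survive once the lemma is restated homologically.
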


\begin{proof}
Let $p$ be the cone point of $\mathcal{C}(X)$. Using the natural deformation retraction from $\mathcal{C}(X)-\{p\}$ to $X$, we see
that $$\mathcal{C}(X)\times\mathbb{R}^{d}-(\{p\}\times\mathbb{R}^{d})=X\times(0,1)\times\mathbb{R}^{d}\simeq X,$$ where $Y \simeq X$ symbolizes $Y$ being homotopic to $X$.  On the other hand, if $\mathcal{C}(X)\times\mathbb{R}^{d}=\mathbb{R}^{n+d+1}$
then $\mathcal{C}(X)\times\mathbb{R}^{d}-(\{p\}\times\mathbb{R}^{d})=\mathbb{R}^{n+d+1}-\mathbb{R}^{d}\simeq S^{n}$.

The other statement follows in a similar fashion if the cone point is not on the boundary of the half-space.  Otherwise, $\{p\}\times\mathbb{R}^{d}$ is contained in the boundary so extracting it results in a contractible space.
\end{proof}

\subsection{$\XC{r}(\SUm{n})$ and $\XC{r}(\Um{n})$}
In this subsection we establish the compact cases of Theorem \ref{theorem2}.

Let $K=\SUm{n}$ and let $\mathfrak{k}$ be its Lie algebra.  Let $d_{r,n}=(n^2-1)(r-1)=\dim_{\C}\XC{r}(G)=\dim_{\mathbb{R}}\XC{r}(K)$.  
Whenever $\XC{r}(K)$ is not a topological manifold, there exists a point $[\rho]\in\XC{r}(K)$ and a neighborhood $\mathcal{N}$ containing 
$[\rho]$ that is not locally homeomorphic to $\mathbb{R}^{d_{r,n}}$, or $\mathbb{R}_+^{d_{r,n}}$ in the case of a boundary point.  

We need a smooth version of Mostow's slice theorem (see \cite{Mo,Br}).  Let $\mathcal{N}_x$ denote a neighborhood at $x$.

\begin{lem}\label{compactslice}
For any $[\rho]\in \XC{r}(K)$, there is a neighborhood $\mathcal{N}_{[\rho]}$ homeomorphic to 
$H^1(\F_r;\mathfrak{k}_{\mathrm{Ad}_{\rho}})/\mathsf{Stab}_{\rho}$.  Moreover, 
$$T_{[\rho]}\XC{r}(K)\cong T_0\left(H^1(\F_r;\mathfrak{k}_{\mathrm{Ad}_{\rho}})/\mathsf{Stab}_{\rho}\right).$$ 
\end{lem}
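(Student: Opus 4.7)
The plan is to apply the smooth equivariant slice theorem of Mostow (see \cite{Mo,Br}) to the conjugation action of the compact Lie group $K$ on the smooth compact manifold $\RC{r}(K)\cong K^r$. Since $K$ is compact and acts smoothly on a smooth manifold and $\mathsf{Stab}_\rho\subset K$ is a closed (hence compact) subgroup, Mostow's theorem produces a $\mathsf{Stab}_\rho$-invariant slice $S_\rho$ through $\rho$ such that the associated bundle $K\times_{\mathsf{Stab}_\rho} S_\rho$ is $K$-equivariantly homeomorphic (indeed diffeomorphic) to a $K$-saturated open neighborhood of the orbit $\mathsf{Orb}_\rho$ in $\RC{r}(K)$. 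Passing to $K$-orbit spaces on both sides yields a homeomorphism of a neighborhood of $[\rho]$ in $\XC{r}(K)$ with $S_\rho/\mathsf{Stab}_\rho$.

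Next I would identify the slice $S_\rho$ with (a neighborhood of $0$ in) a $\mathsf{Stab}_\rho$-invariant complement $V$ to $T_\rho(\mathsf{Orb}_\rho)$ inside $T_\rho\RC{r}(K)$. Such a complement exists because $\mathsf{Stab}_\rho$ is compact, so its linear action on $T_\rho\RC{r}(K)$ is completely reducible (e.g., averaging any inner product over $\mathsf{Stab}_\rho$ gives an invariant one, and $V$ may be taken to be the orthogonal complement to $T_\rho(\mathsf{Orb}_\rho)$). The compact-group analogs of Lemmas \ref{cocycle} and \ref{coboundary} identify
\[
T_\rho\RC{r}(K)\cong Z^1(\F_r;\mathfrak{k}_{\Ad_\rho}),\qquad T_\rho(\mathsf{Orb}_\rho)\cong B^1(\F_r;\mathfrak{k}_{\Ad_\rho}),
\]
so the $\mathsf{Stab}_\rho$-equivariant quotient is
\[
V\cong Z^1(\F_r;\mathfrak{k}_{\Ad_\rho})/B^1(\F_r;\mathfrak{k}_{\Ad_\rho})=H^1(\F_r;\mathfrak{k}_{\Ad_\rho}).
\]
Combining with the previous paragraph, a neighborhood $\mathcal{N}_{[\rho]}$ of $[\rho]$ in $\XC{r}(K)$ is homeomorphic to an open neighborhood of $0$ in $H^1(\F_r;\mathfrak{k}_{\Ad_\rho})/\mathsf{Stab}_\rho$. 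Since the latter is a real cone (it is the quotient of a real vector space by a linear compact-group action), its restriction to a neighborhood of the cone point and the full quotient are homeomorphic via radial scaling, so one obtains a homeomorphism onto all of $H^1(\F_r;\mathfrak{k}_{\Ad_\rho})/\mathsf{Stab}_\rho$, as claimed.

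For the second assertion, the slice map is smooth on $S_\rho\setminus\{\rho\}$ and its differential at $\rho$ identifies $T_\rho S_\rho$ with $V\cong H^1(\F_r;\mathfrak{k}_{\Ad_\rho})$ as $\mathsf{Stab}_\rho$-representations. The semi-algebraic Zariski tangent space commutes with passage to a semi-algebraic quotient by a compact group (essentially because the ring of $\mathsf{Stab}_\rho$-invariants on $V$ maps to the ring of functions on $S_\rho/\mathsf{Stab}_\rho$ in a way whose cotangent behavior at the base point is unchanged), yielding
\[
T_{[\rho]}\XC{r}(K)\cong T_{[\rho]}\bigl(S_\rho/\mathsf{Stab}_\rho\bigr)\cong T_0\bigl(H^1(\F_r;\mathfrak{k}_{\Ad_\rho})/\mathsf{Stab}_\rho\bigr).
\]

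The main obstacle is the careful verification that the smooth equivariant slice theorem in the compact category produces a \emph{global} homeomorphism between $\mathcal{N}_{[\rho]}$ and the full cone $H^1/\mathsf{Stab}_\rho$ (rather than merely a local one near $0$) and that the resulting identification of Zariski tangent spaces is compatible with the semi-algebraic structure on $\XC{r}(K)$ described at the start of Section \ref{singularsection}. Both points are handled by the cone structure of $V/\mathsf{Stab}_\rho$ under the scalar action and by the fact that $\XC{r}(K)$ is a real semi-algebraic subset of $\XC{r}(K_\C)$ whose tangent-space formalism is consistent with the one used in Lemma \ref{tangentspace}.
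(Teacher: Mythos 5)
Your proof is correct and follows essentially the same route as the paper's: identify a slice through $\rho$ (the paper constructs it directly by invariantly exponentiating a $\mathsf{Stab}_\rho$-invariant complement $W$ to $T_\rho\mathsf{Orb}_\rho$ inside $T_\rho\RC{r}(K)$, while you invoke Mostow's slice theorem as a black box and then identify the slice with such a complement — the same content), identify $W\cong H^1(\F_r;\mathfrak{k}_{\Ad_\rho})$ via the cocycle/coboundary lemmas, and conclude by passing to quotients. Your explicit observation that radial scaling identifies a neighborhood of $0$ in $H^1/\mathsf{Stab}_\rho$ with the entire cone is a point the paper leaves implicit when it jumps from "locally diffeomorphic" to the homeomorphism with the full quotient, and is a small but genuine improvement in precision.
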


\begin{proof}
Let $\RC{r}(K)=\hm(\F_r,K)$.  Since $\rho\in \RC{r}(K)$ is a smooth point, $T_{\rho}\RC{r}(K)\cong Z^1(\F_r;\mathfrak{k}_{\mathrm{Ad}_{\rho}})$.  
Moreover, $T_{\rho}\mathsf{Orb}_{\rho}\cong B^1(\F_r;\mathfrak{k}_{\mathrm{Ad}_{\rho}})\subset Z^1(\F_r;\mathfrak{k}_{\mathrm{Ad}_{\rho}}).$
Since $\mathsf{Stab}_{\rho}$ is compact and acts on  $B^1(\F_r;\mathfrak{k}_{\mathrm{Ad}_{\rho}})$, there exists a $\mathsf{Stab}_{\rho}$-invariant 
complement $W$.  Thus $Z^1(\F_r;\mathfrak{k}_{\mathrm{Ad}_{\rho}})\cong T_{\rho}\RC{r}(K)\cong B^1(\F_r;\mathfrak{k}_{\mathrm{Ad}_{\rho}})\oplus W,$ 
which respects the action of the stabilizer.  Since $\RC{r}(K)$ is a smooth compact Riemannian manifold we can invariantly exponentiate $W$ to obtain 
a slice $\exp(W)=S\subset \RC{r}(K)$ such that $T_{\rho}S=W$.  Therefore, $T_{\rho}S\cong H^1(\F_r;\mathfrak{k}_{\mathrm{Ad}_{\rho}})$ as 
$\mathsf{Stab}_{\rho}$-spaces.

Saturating $S$ by $K$ we obtain an open $K$-invariant space, which contains the orbit of $\rho$ since $\rho\in S$; namely $U=K(S).$  Since $U$ is 
open $T_{\rho}U =T_{\rho}\RC{r}(K)$, and since it is saturated $U/K \cong S/\mathsf{Stab}_{\rho}$ is an open subset of $\XC{r}(K)$.

Putting these observations together we conclude $S$ is locally diffeomorphic to $T_{\rho}S$ which implies the neighborhood 
$U/K\cong H^1(\F_r;\mathfrak{k}_{\mathrm{Ad}_{\rho}})/\mathsf{Stab}_{\rho}$, which establishes our first claim.  

Then $S/\mathsf{Stab}_{\rho}$ is locally homeomorphic to $T_{\rho}S/\mathsf{Stab}_{\rho}$, which then implies 
\begin{equation}\label{eq:1} T_{[\rho]}\left(S/\mathsf{Stab}_{\rho}\right) \cong T_0\left( T_{\rho}S/\mathsf{Stab}_{\rho}\right).\end{equation}  
But \begin{equation}\label{eq:2}T_{[\rho]}\XC{r}(K)=T_{[\rho]}\left(U/K\right) \cong T_{[\rho]}\left(S/\mathsf{Stab}_{\rho}\right)\end{equation} 
and \begin{equation}\label{eq:3}T_0\left( T_{\rho}S/\mathsf{Stab}_{\rho}\right)\cong T_0 \left(H^1(\F_r;\mathfrak{k}_{\mathrm{Ad}_{\rho}})/
\mathsf{Stab}_{\rho}\right).\end{equation} Equations \eqref{eq:1}, \eqref{eq:2}, and \eqref{eq:3} together complete the proof.
\end{proof}

\begin{rem}
The above lemma holds for all compact Lie groups $K$.
\end{rem}

\begin{thm}\label{nbhd}Let $r,n\geq2$ and let $\rho\in\RC{r}(\SUm{n})$ be of reduced type $[n_{1},n_{2}]$. Then, there exists a neighborhood 
$[\rho]\in\mathcal{N}\subset\XC{r}(\SUm{n})$
that is homeomorphic to $\mathbb{R}^{d_{ \SUm{n}}}\times\mathcal{C}\left(\cp^{(r-1)n_{1}n_{2}-1}\right),$
where $d_{\SUm{n}}=(r-1)(n_{1}^2+n_{2}^{2}-1)+1$. Also, if $\rho\in\RC{r}(\Um{n})$
is of reduced type $[n_{1},n_{2}]$, there exists a neighborhood $[\rho]\in\mathcal{N}\subset\XC{r}(\Um{n})$
that is homeomorphic to $\mathbb{R}^{d_{\Um{n}}}\times\mathcal{C}\left(\cp^{(r-1)n_{1}n_{2}-1}\right),$
where $d_{\Um{n}}=(r-1)(n_{1}^2+n_{2}^{2})+2$.
\end{thm}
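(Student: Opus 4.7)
The plan is to apply the compact slice Lemma~\ref{compactslice}, which provides a neighborhood of $[\rho]$ homeomorphic to $H^1(\F_r; \mathfrak{k}_{\Ad_\rho})/\mathsf{Stab}_\rho$. After conjugation we may assume $\rho = \rho_1 \oplus \rho_2$ is block-diagonal with $\rho_i$ irreducible of size $n_i$, and since the reduced type is $[n_1, n_2]$ we have $\rho_1 \not\cong \rho_2$. For $K = \SUm{n}$ the stabilizer consists of block-scalar unitaries of determinant one, a closed one-dimensional subgroup whose effective image acting on the off-block part $\mathfrak{m}$ (below) is all of $S^1$; for $K = \Um{n}$ we instead have $(S^1)^2$ whose central diagonal factor acts trivially, yielding again an effective $S^1$.

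The key step is to decompose $\mathfrak{k}$ as a $\mathsf{Stab}_\rho$-module,
\[
\mathfrak{su}_n = \mathfrak{su}_{n_1} \oplus \mathfrak{su}_{n_2} \oplus \mathfrak{t}_0 \oplus \mathfrak{m}, \qquad \mathfrak{u}_n = \mathfrak{u}_{n_1} \oplus \mathfrak{u}_{n_2} \oplus \mathfrak{m},
\]
where $\mathfrak{t}_0$ is the one-dimensional Lie algebra of the $\SUm{n}$-stabilizer and $\mathfrak{m} \cong M_{n_1 \times n_2}(\C)$ is the off-block skew-Hermitian complement, naturally a complex vector space of complex dimension $n_1 n_2$. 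The adjoint action splits accordingly: trivial on $\mathfrak{t}_0$, via $\Ad_{\rho_i}$ on the block summands, and $B \mapsto \rho_1(w) B \rho_2(w)^{-1}$ on $\mathfrak{m}$. Passing to $\F_r$-cohomology yields a $\mathsf{Stab}_\rho$-equivariant direct sum decomposition of $H^1$; the stabilizer acts trivially on the contributions from $\mathfrak{su}_{n_i}$, $\mathfrak{u}_{n_i}$, and $\mathfrak{t}_0$ (block-scalar conjugation is trivial on each block), and by a single non-zero character on $H^1(\F_r; \mathfrak{m})$.

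An Euler-characteristic dimension count, using the irreducibility of each $\rho_i$ and the assumption $\rho_1 \not\cong \rho_2$ to vanish the relevant $H^0$'s, shows that the trivially-acted summands have total real dimension $d_{\SUm{n}}$ in the first case and $d_{\Um{n}}$ in the second, while $H^1(\F_r; \mathfrak{m})$ is complex of dimension $(r-1)n_1 n_2$. Because the stabilizer acts trivially on the trivially-acted piece, the full quotient factors topologically as
\[
\R^{d} \times \bigl(H^1(\F_r; \mathfrak{m})/S^1\bigr),
\]
where $d$ equals $d_{\SUm{n}}$ or $d_{\Um{n}}$ as appropriate. Since $S^1$ acts on $H^1(\F_r; \mathfrak{m}) \cong \C^{(r-1)n_1 n_2}$ by a single non-zero character, an integer reparameterization of the circle reduces the action to the standard scalar one, and Lemma~\ref{lem:S1-action}(b) identifies the quotient with $\mathcal{C}\bigl(\cp^{(r-1)n_1 n_2 - 1}\bigr)$.

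The main obstacle I anticipate is carefully justifying that the $\mathsf{Stab}_\rho$-action on $H^1(\F_r; \mathfrak{m})$ has all weights equal to one common non-zero character of $S^1$; otherwise the quotient would be a weighted projective cone rather than the ordinary $\mathcal{C}\bigl(\cp^{(r-1)n_1 n_2 - 1}\bigr)$. This reduces to the observation that $\mathfrak{m}$ is a single isotypic $\mathsf{Stab}_\rho$-component, carrying the single character $\lambda \mu^{-1}$, and that this character persists unchanged on the $r$-fold cocycle space and on its quotient by $B^1$.
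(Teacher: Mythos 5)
Your proof is correct, and it takes the direct computational route that the paper explicitly acknowledges exists but chooses not to present (``there is a direct computational proof of Theorem~\ref{nbhd}. However, \dots we can provide a shorter argument''). You decompose $\mathfrak{k}$ directly as a $\mathsf{Stab}_\rho$-module into block-diagonal and off-block pieces, pass through $\F_r$-cohomology term by term, verify via the Euler characteristic (together with Schur's lemma and $\rho_1\not\cong\rho_2$) that the trivially-acted summands contribute $d_{\SUm{n}}$ or $d_{\Um{n}}$ real dimensions while $H^1(\F_r;\mathfrak{m})\cong\C^{(r-1)n_1n_2}$, and then apply Lemmas~\ref{compactslice} and \ref{lem:S1-action}. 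The paper instead introduces the Cartan involution $\tau:A\mapsto-\overline{A}^T$, identifies $H^1(\F_r;\mathfrak{k}_{\Ad_\rho})$ with the $\tau$-fixed subspace of $H^1(\F_r;\mathfrak{g}_{\Ad_\rho})$, and transports the decomposition (\ref{eq:H1-decomposition}) already established in the complex proof of Theorem~\ref{case1}, extracting the fixed-point set $F=W^\tau$. Your approach is more self-contained (it re-derives the splitting and dimensions from scratch in the compact category) whereas the paper's is shorter at the cost of importing the complex case. Your worry about the $S^1$-weights is legitimate but you resolve it correctly: $\mathfrak{m}$ is a single isotypic component for the character $\lambda\mu^{-1}$, and for both $\SUm{n}$ and $\Um{n}$ this character identifies $\mathsf{Stab}_\rho/Z(K)$ isomorphically with $S^1$, so the effective action on $H^1(\F_r;\mathfrak{m})$ is the standard weight-one scalar action and Lemma~\ref{lem:S1-action}(b) applies verbatim, giving $\mathcal{C}(\cp^{(r-1)n_1n_2-1})$.
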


\begin{cor}
If $K=\Um{n}$ or $K=\SUm{n}$, both $r,n\geq2$, and $(r,n)\not=(2,2),(2,3),$
or $(3,2)$, then $\XC{r}(K)$ is not a manifold with boundary. 
\end{cor}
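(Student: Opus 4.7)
The plan is to combine Theorem \ref{nbhd} with the obstruction criterion of Lemma \ref{lem:euclidian-neigh}. For each excluded pair $(r,n)$ I would choose a reducible representation whose local neighborhood is explicitly described by Theorem \ref{nbhd}, and then derive a cohomological contradiction to the resulting neighborhood being Euclidean or half-Euclidean.

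The first step is to select, for each pair with $r,n\geq 2$ and $(r,n)\notin\{(2,2),(2,3),(3,2)\}$, a partition $n=n_{1}+n_{2}$ with $n_{1},n_{2}\geq 1$ such that $m:=(r-1)n_{1}n_{2}-1\geq 2$. Taking $(n_{1},n_{2})=(1,n-1)$ gives $m=(r-1)(n-1)-1$, which is at least $2$ whenever $n\geq 4$; when $n=3$ the same choice works provided $r\geq 3$, and when $n=2$ the only available partition $[1,1]$ forces $r\geq 4$. A short case check shows this accounts for precisely the non-excluded values: in the three excluded cases the optimal choice yields $m=0$ (cone on a point, i.e. a half-line) or $m=1$ (cone on $S^{2}$, i.e. $\mathbb{R}^{3}$), both of which are locally Euclidean or half-Euclidean.

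Next I would construct $\rho=\rho_{1}\oplus\rho_{2}\in\RC{r}(K)$ with each $\rho_{i}:\F_{r}\to\Um{n_{i}}$ irreducible, adjusting the determinant by a suitable root of unity when $K=\SUm{n}$ in the manner of the proof of Corollary \ref{cor:SLm-dense}. Such irreducible factors exist because $r\geq 2$. By Theorem \ref{nbhd} there is a neighborhood of $[\rho]$ in $\XC{r}(K)$ homeomorphic to $\mathbb{R}^{d}\times\mathcal{C}(\cp^{m})$. If $\XC{r}(K)$ were a topological manifold with boundary, this neighborhood would be homeomorphic to $\mathbb{R}^{d_{r,n}}$ or to a closed half-space of the same dimension; Lemma \ref{lem:euclidian-neigh} would then force $\cp^{m}$ to be homotopy equivalent either to a point or to a sphere $S^{2m}$.

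The contradiction follows from the cohomology ring: for $m\geq 2$, $H^{\ast}(\cp^{m};\mathbb{Z})$ is nonzero in every even degree from $0$ to $2m$, so $\cp^{m}$ is homotopy equivalent neither to a point nor to any single sphere. Hence no such manifold-with-boundary structure can exist, establishing the corollary. The main technical input, Theorem \ref{nbhd}, has already been established, so the remaining obstacle is merely the combinatorial verification that $(r-1)n_{1}n_{2}\geq 3$ can always be achieved outside the three exceptional pairs; the cohomological step is then immediate.
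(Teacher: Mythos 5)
Your proof is correct and follows essentially the same approach as the paper: apply Theorem \ref{nbhd} to a representation of reduced type $[n_1,n_2]$ with $(r-1)n_1n_2\geq 3$ and then invoke Lemma \ref{lem:euclidian-neigh}, ruling out $\cp^m\simeq S^{2m}$ (or a point) via cohomology. The paper's proof of the corollary arranges the combinatorics in the reverse direction, enumerating the $(r,n)$ for which $(r-1)n_1n_2-1\in\{0,1\}$ can occur rather than exhibiting a partition forcing it to be $\geq 2$, but the substance is identical.
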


\begin{proof}
Theorem \ref{nbhd} implies that $\XC{r}(\Um{n})$ or $\XC{r}(\SUm{n})$ are
manifolds only if $\mathbb{R}^{d_{\Um{n}}}\times\mathcal{C}\left(\cp^{(r-1)n_{1}n_{2}-1}\right)$, respectively $\mathbb{R}^{d_{\SUm{n}}}\times\mathcal{C}\left(\cp^{(r-1)n_{1}n_{2}-1}\right)$,
is locally Euclidean. By Lemma \ref{lem:euclidian-neigh},
this can only be the case if $n_{1}n_{2}(r-1)-1\in\{0,1\}$, with
$n=n_{1}+n_{2}$ and $n_{1},n_{2}>0$. In the first case, $n_{1}n_{2}(r-1)=1$,
which implies $n_{1}=n_{2}=1$ and $r=2$, so $(r,n)=(2,2)$.  From Section \ref{ex:manifold} we know $\XC{2}(\Um{2})$ and
$\XC{2}(\SUm{2})$ are manifolds with boundary, and we conclude the neighborhood in this case is half-Euclidean since $\mathcal{N}=\R^d\times [0,1)$, for appropriate $d$. 

The other possibility is $n_{1}n_{2}(r-1)=2$ so that $n_{1}=2$ and $n_{2}=1$, or $n_{1}=1$ and $n_{2}=2$, and $r=2$.  This is the case $(r,n)=(2,3)$. 
Otherwise, $r=3$ and $n_{1}=n_{2}=1$, which is the case $(r,n)=(3,2)$. Moreover, from Section \ref{ex:manifold} these two are the only cases which 
are manifolds.

Having exhausted all possibilities, the proof is complete.
\end{proof}

We now prove Theorem \ref{nbhd}.

\begin{proof}[Proof of Theorem \ref{nbhd}]
Similar to Theorem \ref{case1}, there is a direct computational proof of Theorem \ref{nbhd}.  However, using Theorem \ref{case1}, Lemma \ref{lem:S1-action} and the relation between $K$ and its complexification, we can provide a shorter argument. 

Let $\tau$ be the Cartan involution on $\mathfrak{g}=\glm{n}$, the Lie algebra of $\GLm{n}$, which is just the linear map $A\mapsto-\overline{A}^{T}$,
acting on a matrix $A\in\glm{n}$. By definition, the fixed point subspace of $\tau$ is $\mathfrak{k}$, the Lie algebra $\mathfrak{u}_{n}$ of $\Um{n}$.
One easily checks that $\tau$ induces an involution on $Z^{1}(\F_{r};\mathfrak{g}_{\mathrm{Ad}_{\rho}})\cong\mathfrak{g}^{r}$,
whose fixed subspace is $Z^{1}(\F_{r};\mathfrak{k}_{\mathrm{Ad}_{\rho}})\cong\mathfrak{k}^{r}$,
and similarly $B^{1}(\F_{r};\mathfrak{g}_{\mathrm{Ad}_{\rho}})^{\tau}=B^{1}(\F_{r};\mathfrak{k}_{\mathrm{Ad}_{\rho}})$.
This implies that $\tau$ induces an involution, also denoted $\tau$,
on the first cohomology, and that $H^{1}(\F_{r};\mathfrak{k}_{\mathrm{Ad}_{\rho}})$
is naturally isomorphic to $H^{1}(\F_{r};\mathfrak{g}_{\mathrm{Ad}_{\rho}})^{\tau}$. 

Now, assume that $\rho=\rho_{1}\oplus\rho_{2}\in U_{r,n}\cap\RC{r}(\Um{n})$,
is of reduced type $[n_{1},n_{2}]$ ($n_{1},n_{2}>0$, $n_{1}+n_{2}=n$).
Note that $\rho_{1}$ and $\rho_{2}$ are irreducible representations
in $\RC{r}(\Um{n_{1}})$ and $\RC{r}(\Um{n_{2}})$, respectively,
and with respect to the $\p\Um{n}$ conjugation action $\mathsf{Stab}_{\rho}\cong S^{1}$. Then
a cocycle $\phi\in Z^{1}(\F_{r};\mathfrak{k}_{\mathrm{Ad}_{\rho}})\cong\mathfrak{k}^{r}$
has the form\[
\phi=\left(\begin{array}{cc}
\phi_{1} & A\\
-\overline{A}^{T} & \phi_{2}\end{array}\right),\]
where $\phi_{i}\in Z^{1}(\F_{r};\mathfrak{k}_{\mathrm{Ad}_{\rho_{i}}})$,
and as in Theorem \ref{case1}, $A$ is now an arbitrary $r$-tuple
of $n_{1}\times n_{2}$ matrices. This shows that $\tau$ respects
the decomposition in Equation (\ref{eq:H1-decomposition}) , so we
get \begin{eqnarray*}
H^{1}(\F_{r};\mathfrak{k}_{\mathrm{Ad}_{\rho}})=H^{1}(\F_{r};\mathfrak{g}_{\mathrm{Ad}_{\rho}})^{\tau} & = & H^{1}(\F_r; \mathfrak{g}_{\mathrm{Ad}_{\rho_{1}}})^{\tau}\oplus H^{1}(\F_r; \mathfrak{g}_{\mathrm{Ad}_{\rho_{2}}})^{\tau}\oplus W^{\tau}\\
 & = & H^{1}(\F_r; \mathfrak{k}_{\mathrm{Ad}_{\rho_{1}}})\oplus H^{1}(\F_r; \mathfrak{k}_{\mathrm{Ad}_{\rho_{2}}})\oplus F\end{eqnarray*}
where, by the form of the cocycles above, we can write $F:=W^{\tau}=\{(\mathbf{z},-\bar{\mathbf{z}}):\mathbf{z}\in\C^{n_{1}n_{2}(r-1)}\}$;
using also $\dim_{\C}W=2n_{1}n_{2}(r-1)$. 

By Lemma \ref{compactslice}, a neighborhood of $\rho$ is locally homeomorphic to $H^{1}(\F_{r};\mathfrak{k}_{\mathrm{Ad}_{\rho}})/\mathsf{Stab}_{\rho}$.
As in the proof of Theorem \ref{case1}, the action of $\mathsf{Stab}_{\rho}=S^{1}$
does not affect $H^{1}(\F_r; \mathfrak{k}_{\mathrm{Ad}_{\rho_{i}}})$,
$i=1,2$, and we conclude that\begin{eqnarray*}
H^{1}(\F_{r};\mathfrak{k}_{\mathrm{Ad}_{\rho}})/\mathsf{Stab}_{\rho} & = & H^{1}(\F_r; \mathfrak{k}_{\mathrm{Ad}_{\rho_{1}}})\oplus H^{1}(\F_r; \mathfrak{k}_{\mathrm{Ad}_{\rho_{2}}})\oplus F/S^{1}\\
 & \cong & \mathbb{R}^{d_{\Um{n}}}\oplus\mathcal{C}(\cp^{n_{1}n_{2}(r-1)-1}),\end{eqnarray*}
by using Lemma \ref{lem:S1-action}. The dimension $d_{\Um{n}}$ is computed
by:\[
d_{\Um{n}}=\sum_{i=1}^{2}\dim_{\mathbb{R}}H^{1}(\F_r; \mathfrak{k}_{\mathrm{Ad}_{\rho_{i}}})=(n_{1}^{2}+n_{2}^{2})(r-1)+2.\]
The case of $K=\SUm{n}$ is similar.

\end{proof}

\begin{rem}
We note that in \cite{LBT} it is shown that the $(2,2)$, $(2,3)$,and $(3,2)$ cases are 
also the only examples which are complete intersections.
\end{rem}

\begin{rem}
Note that using the identity representation $($maximal stabilizer$)$ results in 
$H^1(\F_r;\mathfrak{k}_{\mathrm{Ad}_{\mathrm{id}}})/\mathsf{Stab}_{\mathrm{id}}
=\mathfrak{k}^r/\SUm{n}$ since the coboundaries are trivial.  Removing a point results 
in a homological sphere quotient $S^{(n^2-1)(r-1)-1}/ \SUm{n}$.  If there was a 
Euclidean neighborhood about the identity, then this sphere quotient would be a 
homology sphere $S^{(n^2-1)(r-2)-1}$. We find this quite likely to give a different 
obstruction. At the other extreme $($central stabilizer$)$ the points are smooth and thus 
admit Euclidean neighborhoods.
\end{rem}

\begin{conj}
If $K$ is equal to $\SUm{n}$ or $\Um{n}$, $[\rho]\in\XC{r}(K)^{red}$, $r,n\geq 2$, and $(r,n)\not=(2,2), (2,3)$ or $(3,2)$, then there does not exists a neighborhood of $[\rho]$ that is Euclidean.
\end{conj}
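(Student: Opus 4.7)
The plan is to reduce the conjecture to the two-block case handled by Theorem \ref{nbhd}, exploiting the fact that within any topological space the set of points admitting a Euclidean (or half-Euclidean) neighborhood is open. Let $[\rho]\in\XC{r}(K)^{red}$ have semi-simple representative $\rho=\rho_1\oplus\cdots\oplus\rho_k$ with each $\rho_i$ irreducible of dimension $n_i\geq 1$ and $k\geq 2$. When $k=2$, Theorem \ref{nbhd} applies directly: a neighborhood of $[\rho]$ is homeomorphic to $\mathbb{R}^{d}\times\mathcal{C}(\cp^{n_1n_2(r-1)-1})$, and the argument in the proof of Theorem \ref{theorem2} (via Lemma \ref{lem:euclidian-neigh}) shows this is (half-)Euclidean only for $(r,n)\in\{(2,2),(2,3),(3,2)\}$.

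For $k\geq 3$, fix a partition $\{1,\ldots,k\}=I\sqcup J$ into two nonempty subsets and set $m_1=\sum_{i\in I}n_i$, $m_2=\sum_{j\in J}n_j$. The key step is to show that every neighborhood of $[\rho]$ in $\XC{r}(K)$ contains a class $[\sigma\oplus\tau]$ with $\sigma\in\RC{r}(\Um{m_1})$ and $\tau\in\RC{r}(\Um{m_2})$ both irreducible. To see this, one approximates the reducible pair $(\bigoplus_{i\in I}\rho_i,\bigoplus_{j\in J}\rho_j)\in\RC{r}(\Um{m_1})\times\RC{r}(\Um{m_2})$ by irreducibles, using density of irreducibles in $\RC{r}(\Um{m})$ for $r\geq 2$; under the block-diagonal embedding into $\RC{r}(\Um{n})$ this produces $\sigma\oplus\tau$ arbitrarily close to $\rho$ in $K^r$, hence $[\sigma\oplus\tau]$ arbitrarily close to $[\rho]$ in $\XC{r}(K)$ by continuity of the quotient. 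The $\SUm{n}$ variant follows by a determinant-rescaling as in Corollary \ref{cor:SLm-dense}. Since $k\geq 3$, the reduced type $[m_1,m_2]$ of $\sigma\oplus\tau$ differs from that of $\rho$, so $[\sigma\oplus\tau]\neq[\rho]$.

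Applying Theorem \ref{nbhd} to $[\sigma\oplus\tau]$, its neighborhood is $\mathbb{R}^{d'}\times\mathcal{C}(\cp^{m_1m_2(r-1)-1})$, which by Lemma \ref{lem:euclidian-neigh} is (half-)Euclidean only when $\cp^{m_1m_2(r-1)-1}$ is a point or homotopy equivalent to a sphere, that is, $m_1m_2(r-1)\in\{1,2\}$. A direct enumeration over $m_1,m_2\geq 1$ with $m_1+m_2=n$ and $r\geq 2$ shows this forces $(r,n)\in\{(2,2),(2,3),(3,2)\}$, contradicting our hypothesis; hence $[\sigma\oplus\tau]$ has no (half-)Euclidean neighborhood. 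Since such classes occur arbitrarily close to $[\rho]$ and the property of admitting such a neighborhood is open, $[\rho]$ itself cannot admit one either. The main technical subtlety is the approximation step when $\rho$ has repeated irreducible summands: then $\mathsf{Stab}_\rho$ is nonabelian and some care is needed to ensure that distinct reduced types descend to distinct points of $\XC{r}(K)$ in every small neighborhood of $[\rho]$. I expect this to follow from the slice description in Lemma \ref{compactslice} combined with the observation that reduced type is a continuous (integer-valued) invariant on each stratum of the stratification of Theorem \ref{classification}.
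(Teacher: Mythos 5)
The statement you are proving is left open in the paper: the authors explicitly write, immediately after stating it, ``We proved this conjecture for representations of reduced type $[n_1,n_2]$ above,'' so the paper's own argument covers only the case of exactly two irreducible summands. Your proposal is thus a genuine extension, not a re-derivation. Its key new idea---that the set of points admitting a (half-)Euclidean neighborhood is open, and that two-block reducibles (to which Theorem~\ref{nbhd} applies) accumulate at every reducible class---is sound and does appear to close the conjecture. The numerical enumeration also checks out: for any partition $m_1+m_2=n$ with $m_1,m_2\geq 1$ one has $m_1m_2\geq n-1$, and since $k\geq 3$ forces $n\geq 3$, the only way $m_1m_2(r-1)\leq 2$ can hold is $(r,n)=(2,3)$, which is excluded by hypothesis; so every partition gives a non-Euclidean cone factor.

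Three remarks on the details. First, the subtlety you flag (repeated summands of $\rho$) is not really the dangerous one: by Jordan--H\"older, a two-block polystable point is never conjugate to a $k(\geq 3)$-block one, so $[\sigma\oplus\tau]\neq[\rho]$ automatically, as you in fact observe; what does need a word is that when $m_1=m_2$ you must perturb so that $\sigma\not\cong\tau$, otherwise $\sigma\oplus\tau$ has stabilizer larger than the minimal torus and Theorem~\ref{nbhd} does not apply as stated. Second, the density of irreducibles in $\RC{r}(\Um{m})$ for $r\geq 2$ is used implicitly and should be cited or proved (it is standard, e.g.\ irreducibles form an open dense complement of a proper real algebraic subset of $\Um{m}^r$). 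Third---and this is the one real gap, shared with the paper's own use of Lemma~\ref{lem:euclidian-neigh}---the lemma as stated shows the specific neighborhood $\R^{d'}\times\mathcal{C}(\cp^{m_1m_2(r-1)-1})$ is not homeomorphic to $\R^N$ or $\R_+^N$; to conclude that $[\sigma\oplus\tau]$ admits \emph{no} (half-)Euclidean neighborhood whatsoever, one must pass to a local invariant. The clean fix is local homology: $H_*\bigl(\mathcal{N},\mathcal{N}-\{p\}\bigr)\cong\widetilde{H}_{*-1}\bigl(S^{d'-1}*\cp^{m_1m_2(r-1)-1}\bigr)$ is nonzero in at least two degrees when $m_1m_2(r-1)\geq 3$, while the local homology at any point of $\R^N$ or $\R_+^N$ is concentrated in at most one degree. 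With that strengthening inserted, your argument is correct and proves the full conjecture.
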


We proved this conjecture for representations of reduced type $[n_1,n_2]$ above.  In fact, it seems likely that the neighborhoods around most singularities do not even admit an orbifold structure (not homeomorphic to a finite quotient of a Euclidean ball).

\subsection{$\XC{r}(\SLm{n})$ and $\XC{r}(\GLm{n})$}
In this last subsection, we complete the proof of Theorem \ref{theorem2} by proving the following result.

\begin{thm}Let $r,n\geq 2$ and let $G$ be $\SLm{n}$ or $\GLm{n}$ .  $\XC{r}(G)$ is a topological manifold with boundary if and only if $(r,n)=(2,2)$.
\end{thm}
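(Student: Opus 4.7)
The plan is to handle $G = \GLm{n}$ first and then deduce the $\SLm{n}$ case from Corollary \ref{etalequiv}. The ``if'' direction for $(r,n) = (2,2)$ is already covered by Section \ref{ex:manifold}: the explicit isomorphisms $\XC{2}(\SLm{2}) \cong \C^{3}$ and $\XC{2}(\GLm{2}) \cong \C^{3} \times (\C^{*})^{2}$ exhibit both spaces as smooth, hence topological manifolds without boundary.

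For the ``only if'' direction with $G = \GLm{n}$, fix $(r,n) \neq (2,2)$ with $r,n \geq 2$ and choose $\rho \in U_{r,n}$ of reduced type $[1, n-1]$. By Lemma \ref{tangentspace} together with Remark \ref{etalenbhd}, the point $[\rho]$ has a ball-topology neighborhood locally homeomorphic to $H^{1}(\F_{r};\g_{\Ad_{\rho}}) \aq \mathsf{Stab}_{\rho}$. The $(\C^{*}\times\C^{*})$-equivariant decomposition established in the proof of Theorem \ref{case1} identifies this quotient with $\C^{d'} \times V$, where the $\C^{d'}$ factor absorbs the two trivial summands $H^{1}(\F_{r};\g_{\Ad_{\rho_{i}}})$ and, by Proposition \ref{pro:Singular-GLcase}, $V := W \aq (\C^{*}\times\C^{*})$ is the determinantal variety of $m\times m$ matrices of rank $\leq 1$ with $m = n_{1}n_{2}(r-1) = (n-1)(r-1)$.

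The next step is to pin down the local topology of $V$ at its cone vertex $0$. Because $V \subset \C^{m^{2}}$ is invariant under the scaling $x_{ij}\mapsto \lambda^{2}x_{ij}$ (inherited from the ambient $\C^{2m}$-presentation), the restriction of this action to $\R_{>0}$ has $0$ as its only fixed point and admits the link $L := V \cap S^{2m^{2}-1}$ as a transversal; thus $V$ is homeomorphic to the real open cone $\mathcal{C}(L)$. By Lemma \ref{complexcone}(b), $V\setminus\{0\} \cong \C^{*}\times \cp^{m-1}\times \cp^{m-1}$, and the $\R_{>0}$-retraction onto $L$ gives the homotopy equivalence $L \simeq S^{1}\times \cp^{m-1}\times \cp^{m-1}$.

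Applying Lemma \ref{lem:euclidian-neigh} to $\R^{2d'}\times \mathcal{C}(L)$, a Euclidean neighborhood at $[\rho]$ would force $L$ to be a homotopy sphere, while a half-Euclidean one would force $L$ to be a homotopy sphere or contractible. For any $(r,n)\neq(2,2)$ with $r,n\geq 2$ we have $m = (n-1)(r-1)\geq 2$, so the K\"unneth formula gives $\operatorname{rank} H_{1}(L)\geq 1$ and $\operatorname{rank} H_{2}(L)\geq 2$, ruling out both possibilities. Hence $\XC{r}(\GLm{n})$ is not a manifold with boundary at $[\rho]$, finishing the $\GLm{n}$ case. The $\SLm{n}$ case follows from Corollary \ref{etalequiv}: the étale equivalence between $\XC{r}(\GLm{n})$ and $\XC{r}(\SLm{n})\times (\C^{*})^{r}$ is a local homeomorphism in the ball topology, and $(\C^{*})^{r}$ is a smooth manifold, so $\XC{r}(\SLm{n})$ is a manifold with boundary iff $\XC{r}(\GLm{n})$ is. The principal subtlety is transporting algebraic data (Luna slice, étale maps) to honest statements about the ball-topology local structure; this is handled entirely by Remark \ref{etalenbhd}, after which the argument reduces to the homotopy computation of the link of the determinantal variety.
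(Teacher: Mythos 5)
Your overall strategy coincides with the paper's: use Remark \ref{etalenbhd} and the slice decomposition from the proof of Theorem \ref{case1} to model a ball-topology neighborhood of a minimal-stabilizer reducible as (smooth factor)$\times V$, with $V$ the affine cone over $\cp^{m-1}\times\cp^{m-1}$, $m=n_1n_2(r-1)$, and then obstruct local Euclideanness via Lemma \ref{lem:euclidian-neigh}. Transferring from $\GLm{n}$ to $\SLm{n}$ via Corollary \ref{etalequiv} is a legitimate variant (the paper treats both at once): the \'etale projection $\XC{r}(\SLm{n})\times(\C^*)^r\to\XC{r}(\GLm{n})$ is a surjective local homeomorphism, so if $\XC{r}(\SLm{n})$ were a manifold with boundary then so would $\XC{r}(\GLm{n})$ be, which is the contrapositive you need. (The converse implication in your asserted ``iff'' is neither needed nor established by your argument, so you should not claim it.)

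The genuine gap is the homology of the link $L$. You apply K\"unneth to $L\simeq S^1\times\cp^{m-1}\times\cp^{m-1}$ on the strength of Lemma \ref{complexcone}(b), but $V\setminus\{0\}$ is the complement of the zero section in the line bundle $\mathcal{O}(-1,-1)$ over the Segre variety $\cp^{m-1}\times\cp^{m-1}$, a \emph{nontrivial} $\C^*$-bundle, and $L$ is its unit circle bundle. Already for $m=2$ one has $L\cong S^2\times S^3$ (the link of the conifold), with $H_1(L)=0$ and $H_2(L)\cong\Z$, so both inequalities you claim from K\"unneth ($\mathrm{rank}\,H_1(L)\geq 1$, $\mathrm{rank}\,H_2(L)\geq 2$) are false. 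Lemma \ref{complexcone}(b) is misleading as a topological statement here: the quotient is a bundle, not a product. The conclusion is still recoverable by a Gysin sequence for the circle bundle with Euler class $-(a+b)\in H^2(\cp^{m-1}\times\cp^{m-1})$: one gets $H^2(L)\cong\Z^2/\langle(1,1)\rangle\cong\Z\neq 0$, while $\dim_\R L = 4m-3\geq 5>2$ for $m\geq 2$, so $L$ is neither a homotopy sphere nor contractible and Lemma \ref{lem:euclidian-neigh} applies. But the K\"unneth step as written is incorrect and must be replaced by such a Gysin (or equivalent) computation.
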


\begin{proof}
By Remark \ref{etalenbhd}, $H^1(\F_r;\mathfrak{g}_{\mathrm{Ad}_{\rho^{ss}}})\aq \mathsf{Stab}_{\rho^{ss}}$ is an 
\'etale neighborhood; that is, an algebraic set that maps, via an \'etale mapping, to an open set (in the ball topology) of $\XC{r}(G)$.  Thus we see 
that at a reducible representation with minimal stabilizer ($\C^*$ for $\SLm{n}$ and $\C^*\times \C^*$ for $\GLm{n}$), that this neighborhood is \'etale equivalent to $\C^{(n_1^2+n_2^2)(r-1)+2}\times \mathcal{C}(\cp^{(r-1)n_1n_2-1}\times\cp^{(r-1)n_1n_2-1})$ in $\XC{r}(\GLm{n})$, where 
the cone here is the affine cone defined over $\C^*$.  In $\XC{r}(\SLm{n})$ we have a similar neighborhood.  Either way, these spaces are not locally Euclidean neighborhoods for $r,n\geq 2$ unless $n=2=r$ which implies that $n_1=1=n_2$.  This is seen by similar arguments given above in the compact cases.
\end{proof}


\def\cdprime{$''$} \def\cdprime{$''$} \def\cprime{$'$} \def\cprime{$'$}
  \def\cprime{$'$} \def\cprime{$'$}
\providecommand{\bysame}{\leavevmode\hbox to3em{\hrulefill}\thinspace}
\providecommand{\MR}{\relax\ifhmode\unskip\space\fi MR }
\providecommand{\MRhref}[2]{%
  \href{http://www.ams.org/mathscinet-getitem?mr=#1}{#2}
}
\providecommand{\href}[2]{#2}

\end{document}